\documentclass[reqno,11pt]{amsart}

\usepackage{amsmath,amsfonts,amssymb,graphicx,amsthm,enumerate,url}
\usepackage[noadjust]{cite}
\usepackage{stmaryrd}

\usepackage{xifthen,xcolor,tikz,setspace}
\usetikzlibrary{decorations.pathmorphing,patterns,shapes,calc}
\usepackage[noadjust]{cite}
\definecolor{refkey}{gray}{.75}
\definecolor{labelkey}{gray}{.5}

\setlength{\oddsidemargin}{6mm}
\setlength{\evensidemargin}{6mm}
\setlength{\textwidth}{145mm}

\usepackage[colorlinks=true]{hyperref}
\colorlet{DarkGreen}{green!50!black}
\colorlet{DarkGray}{gray!60!black}
\colorlet{DarkPurple}{purple!60!black}

\numberwithin{equation}{section}


\renewcommand{\restriction}{\mathord{\upharpoonright}}
\renewcommand{\epsilon}{\varepsilon}
\newcommand{\given}{\;\big|\;}
\newcommand{\one}{\mathbf{1}}
\DeclareMathOperator{\xor}{\triangle}

\usepackage{color}
 \definecolor{refkey}{gray}{.5}
 \definecolor{labelkey}{gray}{.5}
\definecolor{light}{gray}{.9}


\newtheorem{maintheorem}{Theorem}

\newtheorem{theorem}{Theorem}[section]
\newtheorem*{theorem*}{Theorem}
\newtheorem{lemma}[theorem]{Lemma}
\newtheorem{claim}[theorem]{Claim}
\newtheorem{proposition}[theorem]{Proposition}

\newtheorem{fact}[theorem]{Fact}
\newtheorem{corollary}[theorem]{Corollary}

\theoremstyle{definition}{

\newtheorem{definition}[theorem]{Definition}
\newtheorem*{definition*}{Definition}

\newtheorem{remark}[theorem]{Remark}
}

\renewcommand{\P}{\mathbb P}

\newcommand{\Z}{\mathbb Z}

\newcommand{\cC}{\ensuremath{\mathcal C}}

\newcommand{\cF}{\ensuremath{\mathcal F}}

\newcommand{\cP}{\ensuremath{\mathcal P}}

\newcommand{\llb }{\llbracket}
\newcommand{\rrb }{\rrbracket}

\renewcommand{\epsilon}{\varepsilon}

\DeclareMathOperator{\hull}{hull}

\newcommand{\tmix}{t_{\textsc{mix}}}

\newcommand{\tv}{{\textsc{tv}}}
\newcommand{\north}{{\textsc{n}}}
\newcommand{\south}{{\textsc{s}}}
\newcommand{\east}{{\textsc{e}}}
\newcommand{\west}{{\textsc{w}}}
\newcommand{\sd}{{\operatorname{sd}}}

\newcommand{\bP}{{\mathbf{P}}}

\makeatletter
\newcommand{\superimpose}[2]{%
  {\ooalign{$#1\@firstoftwo#2$\cr\hfil$#1\@secondoftwo#2$\hfil\cr}}}
\makeatother
\newcommand{\nconn}{\mathpalette\superimpose{{\longleftrightarrow}{\!\!\!\!\not}}}

\begin{document}

\title[Quasi-polynomial mixing of critical 2D random cluster models]{Quasi-polynomial mixing of critical \\ 2D random cluster models}

\author{Reza Gheissari}
\address{R.\ Gheissari\hfill\break
Courant Institute\\ New York University\\
251 Mercer Street\\ New York, NY 10012, USA.}
\email{reza@cims.nyu.edu}

\author{Eyal Lubetzky}
\address{E.\ Lubetzky\hfill\break
Courant Institute\\ New York University\\
251 Mercer Street\\ New York, NY 10012, USA.}
\email{eyal@courant.nyu.edu}

\begin{abstract}
We study the  Glauber dynamics for the random cluster (FK) model on 
the torus $(\mathbb{Z}/n\mathbb{Z})^2$ with parameters $(p,q)$, for $q  \in (1,4]$ and $p$ the critical point~$p_c$. The dynamics is believed to undergo a \emph{critical slowdown}, with its continuous-time mixing time transitioning from $O(\log n)$ for $p\neq p_c$ to a power-law in $n$ at $p=p_c$.  This was verified at $p\neq p_c$ by Blanca and Sinclair, whereas at the critical  $p=p_c$, with the exception of the special integer points $q=2,3,4$ (where the model corresponds to the Ising/Potts models) the best-known upper bound on mixing  was exponential in $n$.
Here we prove an upper bound of $n^{O(\log n)}$ at $p=p_c$ for all $q\in (1,4]$, where a key ingredient is bounding the number of nested long-range crossings at criticality.
\end{abstract}

{\mbox{}
\vspace{-1.25cm}
\maketitle
}
\vspace{-0.9cm}

\section{Introduction}

The random cluster (FK) model is an extensively studied model in statistical physics, generalizing electrical networks, percolation, and the Ising and Potts models, to name a few, under a single unifying framework. It is defined on a graph $G=(V,E)$ with parameters $0<p<1$ and $q>0$ as the probability measure over subsets $\omega\subset E$ (or equivalently, configurations $\omega \in\{0,1\}^E$), given by
\[\pi_{G,p,q}(\omega)\propto p^{|\omega|}(1-p)^{|E|-|\omega|}q^{k(\omega)}\,,
\]
where $k(\omega)$ is the number of connected components (clusters) in the graph $(V,\omega)$.

At $q=1$, the FK model reduces to independent bond percolation on $G=(V,E)$, and for integer $q\geq 2$ it corresponds via the Edwards--Sokal coupling~\cite{EdSo88} to the Ising ($q=2$) and Potts ($q\geq 3$) models on $V$. Since its introduction around 1970, the model has been well-studied both in its own right and as a means of analyzing the Ising and Potts models, with an emphasis on $\Z^d$ as the underlying graph. There, for every $q\in[1,\infty)$, the model enjoys monotonicity, and exhibits a phase transition at a critical $p_c(q)$ w.r.t.\ the existence (almost surely) of an infinite cluster  (see, e.g.,~\cite{Gr04} and references therein).

Significant progress has been made on the model in $d=2$, in particular  for $1\leq q\leq 4$ where the model is expected to be conformally invariant (see~\cite[Problem 2.6]{Sch07}). It is known~\cite{BeDu12} that $p_c(q)=\frac {\sqrt q}{1+\sqrt q}$ on $\Z^2$ for all $q\geq 1$. Moreover, while the phase transition at this $p_c$ is discontinuous if $q>4$ (as confirmed for all $q>25$ in~\cite{LMR86} and very recently, all $q>4$ in~\cite{DGHMT16}), it is continuous for $1\leq q \leq 4$ (as established in~\cite{DST15}). There, the probability that $x$ belongs to the cluster of the origin decays as $\exp(-c|x|)$ at $p<p_c$, as a power-law $|x|^{-\eta}$ at the critical $p_c$, and is  bounded away from 0 at $p>p_c$.

Here we study  heat-bath Glauber dynamics for the two-dimensional FK model, where the following \emph{critical slowdown} phenomenon is expected: on an $n\times n$ torus, for all $p\neq p_c$ the mixing time of the dynamics should have order $ \log n$ (recently shown by~\cite{BlSi15}), yet at $p=p_c$ it should behave as $ n^z$ for some universal  $z>0$ in the presence of a continuous phase transition, and as $\exp(c n)$ in the presence of a discontinuous phase transition. The critical behavior in the former case (all $q>4$) was established in a companion paper~\cite{GL16}, as was a critical power-law in the cases $q=2$ (\cite{LScritical}) and $q=3$ (\cite{GL16}). In this work we obtain a quasi-polynomial upper bound for non-integer $1<q\leq 4$ at criticality.

More precisely, Glauber dynamics for the FK measure $\pi_{G,p,q}$ is
the continuous-time Markov chain $(X_t)_{t\geq 0}$ that assigns each edge $e\in E$ an i.i.d.\ rate-$1$ Poisson clock, where upon ringing, $X_t(e)$ is resampled via $\pi_{G,p,q}$ conditioned on the values of $ X_t$ on $E-\{e\}$. This Markov chain is reversible by construction w.r.t.\ $\pi_{G,p,q}$, and may hence be viewed both as a natural model for the dynamical evolution of this interacting particle system, and as a simple protocol for sampling from its equilibrium measure. A central question is then to estimate the time it takes this chain to converge to stationarity, measured in terms of the total variation mixing time $\tmix$  (see~\S\ref{sub:prelim-dynamics} for the related definitions).

\begin{figure}
  \hspace{-0.15in}
  \begin{tikzpicture}

    \newcommand{\xfigshift}{9pt}
    \newcommand{\yfigshift}{29.5pt}

      \node (plot) at (0,0){
      \includegraphics[width=1\textwidth]{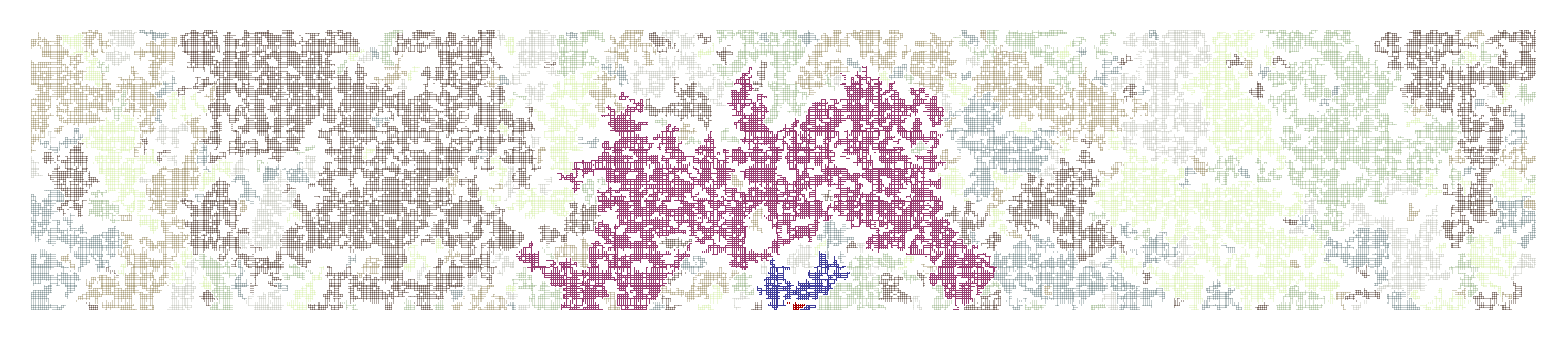}
      };

    \begin{scope}[shift={($(plot.south west)+(\xfigshift,\yfigshift)$)}, x={($(plot.south east)+(\xfigshift,\yfigshift)$)},y={($(plot.north west)+(\xfigshift,\yfigshift)$)}, font=\small,scale=1]

      \draw[color=black] (.95,.58) -- (0.006,.58) -- (.006,-0.179) -- (.95,-0.179);
      \draw[color=black] (0.95,0.58) -- (.95,-.26);
      \draw[color=black] (.006,.58) -- (.006,-.26);

          \fill[fill=gray, fill opacity=0.1] (.006,-.179) rectangle (.95,-.26);
      \draw[pattern=north west lines, pattern color=black, opacity=0.2] (.006,-.179) rectangle (.95,-.26);
      \draw[pattern=north east lines, pattern color=black, opacity=0.2] (.006,-.179) rectangle (.95,-.26);

      \draw[color=white] (.006,-.26) -- (.95,-.26);

    \end{scope}
  \end{tikzpicture}
  \vspace{-0.3in}
  \caption{A critical FK configuration that induces three nested, distinct boundary components (red, blue, purple) called bridges.}
  \label{fig:FK-cluster-bridges}
\end{figure}

For $p\neq p_c$, the fact that $\tmix \asymp \log n$ was established in~\cite{BlSi15} using the aforementioned exponential decay of cluster diameters in the high-temperature regime $p<p_c$:  on finite boxes with certain boundary conditions, this translates to a  property known as \emph{strong spatial mixing}, implying that the number of disagreements between the states of two chains started at  different initial states decreases exponentially fast, thus $\tmix\asymp \log n$; 
this result readily extends  to $p>p_c$ by the duality of the two-dimensional FK model.

At $p=p_c$, where there is no longer  an exponential decay of correlations, polynomial upper bounds on $\tmix$ were obtained for the Ising model in~\cite{LScritical} and the 3-state Potts model---along with a quasi-polynomial bound for the 4-state model---in~\cite{GL16}, using a multiscale approach that reduced the side length of the box by a constant factor in each step  
via a coupling argument; these carry over to the FK model for $q=2,3,4$ by the comparison estimates of~\cite{Ul13}. However, for non-integer $q$, FK configurations may form macroscopic connections along the boundary of smaller-scale boxes, destroying the coupling---this is prevented for integer $q$ thanks to the special relation between FK/Potts models. To control this effect, we prove upper and lower bounds on the total number of disjoint macroscopic connections along the boundaries of the smaller-scale boxes at $p=p_c$ which may be of independent interest (see \S\ref{sub:main-techniques} as well as Fig.~\ref{fig:FK-cluster-bridges}--~\ref{fig:long-range-bc}) 

It was recently shown~\cite{GuJe16} that for $q=2$ the FK Glauber dynamics on \emph{any} graph $G=(V,E)$ has $\tmix\leq |E|^{O(1)}$; the technique there, however, is highly specific to the case of $q=2$. Indeed,  this bound does not hold on $\Z^d$, for any $d\geq 2$,
at $p=p_c$ and $q$ large, as follows from the exponential lower bounds of~\cite{BCFKTVV99,BCT12} 
(see, e.g.,~\cite{CDL12,GL16} for further details).
The best prior upper bound on non-integer $1<q<4$ was  $\tmix \leq \exp(O(n))$.

In the present paper, we prove that for periodic boundary conditions (as well as a wide class of others, including wired and free; see Remark~\ref{rem:mainthm-bc}), the following holds:

\begin{maintheorem} \label{mainthm:1}
Let $q\in (1,4]$ and consider the Glauber dynamics for the critical FK model on $(\mathbb Z/n\mathbb Z)^2$.  There exists $c=c(q)>0$ such that
$\tmix\lesssim n^{c\log n}$.
\end{maintheorem}

\begin{remark}\label{rem:mainthm-bc}
Theorem~\ref{mainthm:1} holds for rectangles with uniformly bounded aspect ratio, under any set of  boundary conditions with the following property: for every edge $e$ on the boundary of the box, there are $O(\log n)$ distinct boundary components connecting vertices on either side of $e$ (see Definitions~\ref{def:xi}--\ref{def:upsilon} and Theorem~\ref{thm:mainthm-fixed-bc}).
This includes, in particular, the wired and free boundary conditions, as well as, with high probability, ``typical'' boundary conditions: those that are sampled from $\pi_{\mathbb Z^2,p_c,q}$ (see Lemma~\ref{lem:retain-bc-1}).
\end{remark}

\begin{remark}\label{rem:q=4} For $q\in \{2,3,4\}$, the comparison estimates of~\cite{Ul13} carry the upper bounds on the mixing time of  the Potts model  to the FK model, yet only for a limited class of boundary conditions (e.g., 
the partition of boundary vertices can have at most one cluster of size larger than $n^\epsilon$, in contrast to  ``typical'' ones as above).
 The above theorem thus extends the class of FK boundary conditions for which  $\tmix$ is quasi-polynomial.
\end{remark}
\begin{remark}\label{rem:mainthm-other-dynamics}
Theorem~\ref{mainthm:1} implies analogous bounds for other single-site dynamics (e.g., Metropolis), as well as global cluster dynamics (e.g.,  Chayes--Machta~\cite{ChMa97}) via~\cite{Ul13}.
\end{remark}

\subsection{Main techniques}\label{sub:main-techniques}
\subsubsection{Disjoint long-range connections}
As pointed out in~\cite{BlSi15} and later in~\cite{GL16}, disjoint long-range clusters along the boundary of rectangles, called bridges (see Fig.~\ref{fig:bridges-no-hull}), are a major obstacle to mixing time upper bounds for the FK Glauber dynamics.

\begin{figure}
\vspace{-0.2in}
  \hspace{-0.15in}
  \begin{tikzpicture}

    \newcommand{\xfigshift}{0pt}
    \newcommand{\yfigshift}{0pt}

     \node (plot) at (0,0){\includegraphics[width=0.6\textwidth]{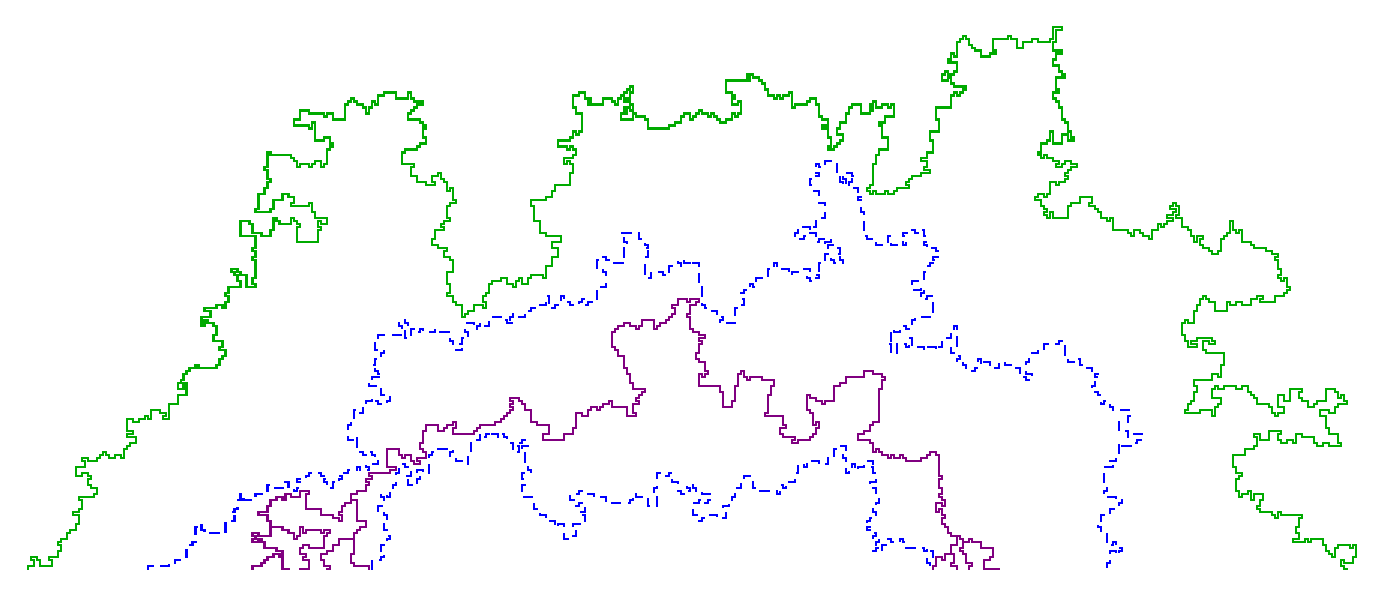}};

    \begin{scope}[shift={($(plot.south west)+(\xfigshift,\yfigshift)$)}, x={($(plot.south east)+(\xfigshift,\yfigshift)$)},y={($(plot.north west)+(\xfigshift,\yfigshift)$)}, font=\small,scale=.91]
     \draw[color=black] (0,0.075) -- (1.1,0.075);

    \end{scope}
  \end{tikzpicture}
  \vspace{-0.2in}
  \caption{A pair of boundary connections (green, purple) constituting two distinct boundary bridges, separated by a (blue) dual connection.}
  \label{fig:bridges-no-hull}
  \vspace{-0.1in}
\end{figure}

\begin{definition}
Let $\Lambda_{n,m}= [0,n]\times [0,m] \cap \mathbb Z^2$. Given an FK configuration $\omega$ on $\mathbb Z^2-\Lambda_{n,m}$ and a boundary edge $e\in \partial \Lambda_{n,m}$, say without loss of generality $e\in \partial_\north \Lambda_{n,m}$, a \emph{bridge} over $e$ is an open FK cluster in $\omega$ that contains at least one vertex in $\partial_\north \Lambda_{n,m}$ to the left of $\{e\}$ and one to the right. Let $\Gamma^e(\omega)$ denote the set of all bridges of $e$.
\end{definition}
(For a more detailed definition, we also refer the reader to Section~\ref{sub:bdy-bridges}.)
In critical bond percolation ($q=1$ and $p=1/2$), the Berg--Kesten (BK) inequality would suggest that $\pi( |\Gamma^e| > K \log n + a) \leq \exp(-c a)$ for universal $K,c>0$ and all $a$. In our setting of FK percolation for $1<q<4$ at $p=p_c$, the classical BK inequality does not hold; 
 nevertheless, Proposition~\ref{prop:bridge-bound} and Lemmas~\ref{lem:disjoint-1}--\ref{lem:disjoint-2} establish such a bound for $|\Gamma^e|$.

It then follows (see Corollary~\ref{cor:typical-bridges}) that if we sample from the FK measure on a $2n\times 2n$ box $\Lambda$  with \emph{arbitrary} boundary conditions, the boundary conditions this induces on the concentric inner $n\times n$ box will have order $\log n$ distinct bridges over a given edge with probability $1-O(n^{-c})$.
A simpler formulation of that is as follows.

\begin{theorem}
For every $q\in (1,4]$, there exist $K'>K>0$ and $c(q)>0$ such that, the critical FK model has, for every $e\in \partial \Lambda_{n,n}$ a, say, $\frac n{10}$ distance from a corner of $\Lambda_{n,n}$,
\begin{align*}
\pi_{\mathbb Z^2,p_c,q}\Big(K \log n \leq |\Gamma^e(\omega)| \leq K'\log n\Big) \geq 1-O(n^{-cK'})\,.
\end{align*}
\end{theorem}

\begin{figure}
\vspace{-0.13in}
  \begin{tikzpicture}
    \node (plot1) at (0,0) {};

    \node (plot2) at (7,0) {};

    \begin{scope}[shift={(plot1.south west)},x={(plot1.south east)},y={(plot1.north west)}, font=\small]

     \draw[color=DarkGreen, thick] (0.,5.9) arc (130:230:2.4) ;
     \draw[color=DarkGreen, thick] (0.,9.4) arc (130:230:2.4) ;
     \draw[color=purple, thick] (0,13.04) arc (148:212:11.6) ;
     \draw[color=blue, thick] (20.05,.9) arc (-34:34:8);
     \draw[color=blue, thick] (20.05,9.8) arc (-42:42:2);
     \draw[thick, color=orange,decoration={bumps, segment length=0.4in, amplitude=0.07in}, decorate] (20,7.63) -- (20, 1.95);

     \draw[draw=black,dashed] (0,7) rectangle (20,14);
     \draw[color=black,thick] (0,0) rectangle (20,14);

     \node (fig1) at (10.02,8.5) {
	\includegraphics[width=159.5pt]{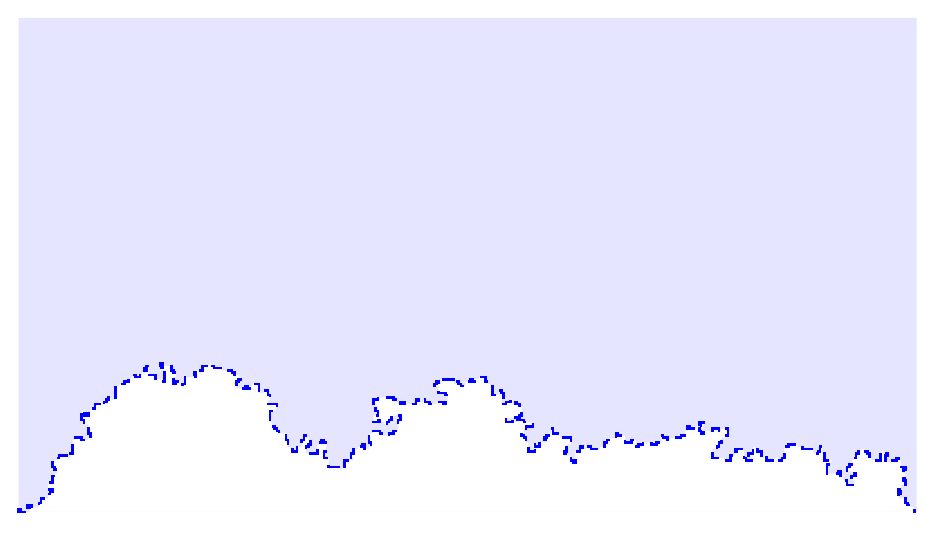}};

    \end{scope}

    \begin{scope}[shift={(plot2.south west)},x={(plot2.south east)},y={(plot2.north west)}, font=\small]

     \draw[color=purple, thick] (0.,5.9) arc (130:230:2.4) ;
     \draw[color=purple, thick] (0.,9.4) arc (130:230:2.4) ;
     \draw[color=purple, thick] (0,13.04) arc (148:212:11.6) ;
     \draw[color=blue, thick] (20.05,9.8) arc (-42:42:2);
     \draw[color=blue, thick] (20.05,.9) arc (-34:34:8);
     \draw[thick, color=purple,decoration={bumps, segment length=0.4in, amplitude=0.07in}, decorate] (20,7.63) -- (20, 1.95);

     \draw[draw=black,dashed] (0,7) rectangle (20,14);
     \draw[color=black,thick] (0,0) rectangle (20,14);

     \node (fig1) at (10.02,8.5) {
	\includegraphics[width=159.5pt]{fig_interface}};

     \node (fig2) at (10.01,1.25) {
	\includegraphics[width=159.5pt]{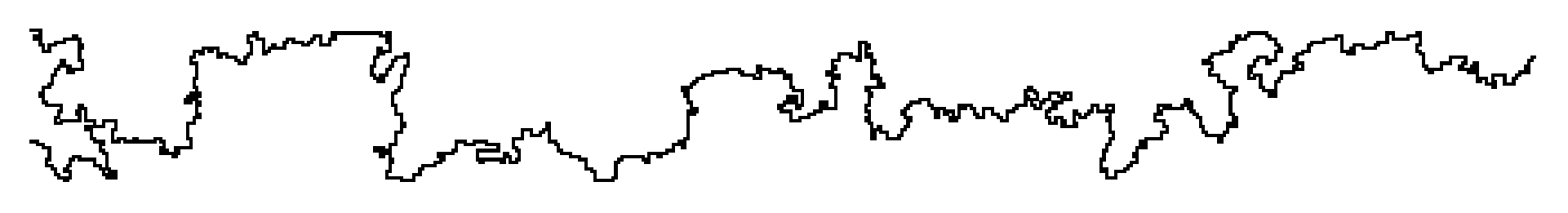}};

    \draw[color=black!50,decoration={bumps, segment length=0.185in, amplitude=0.02in}, decorate] (20,-0.04) -- (0, -0.04);

    \end{scope}

  \end{tikzpicture}
  \vspace{-0.15in}
  \caption{Macroscopic disjoint boundary bridges prevent the coupling of FK configurations sampled under two different boundary conditions on $\partial_\south \Lambda$ from being coupled past a common horizontal dual-crossing.}
  \label{fig:long-range-bc}
    \vspace{-0.07in}
\end{figure}

The lower bound on $|\Gamma^e|$ demonstrates that the behavior of the number of bridges at $p=p_c$ is truly different than at $p\neq p_c$; there, by the exponential decay of correlations at $p<p_c$ (dual connections at $p>p_c$), the typical number of bridges over an edge is~$O(1)$.
The upper bound in the above theorem arises in two crucial ways in the proof of Theorem~\ref{mainthm:1}: (a) the typical number of bridges over an edge $f$ being $O(\log n)$ is used to disconnect all potentially destructive bridges at a cost of $e^{|\Gamma^f|} \lesssim n^{c}$ (Eq.~\eqref{eq:tmix-M}), and (b) the exponential upper tail beyond that is used to sustain a union bound over $n^{O(1)}$  many attempts at coupling (see ~\S\ref{subsub:dynamical} for more details).    

To be more precise about the obstacle posed by having multiple bridges over an edge, recall the following. In~\cite{LScritical} and then~\cite{GL16}, the upper bounds on the mixing time of the Potts models at $\beta=\beta_c$ for $q\in\{2,3,4\}$ relied on RSW bounds~\cite{DHN11, DST15} to expose dual-interfaces in the FK representation, beyond which block dynamics chains could be coupled. However, the fact that chains, started from any two initial configurations, could be coupled past a dual-interface, relied on a certain conditional event, implicit in the relation between the FK and Potts models at integer $q$ (that no distinct boundary components were connected in the interior configuration). Without this conditioning, connections between two components on one side of a rectangle alter the boundary conditions elsewhere via bridges over the dual-interface, preventing coupling (see Fig.~\ref{fig:long-range-bc}). 

Similar difficulties were pointed out in~\cite{BlSi15} at $p< p_c$, and later in~\cite{GL16} at criticality when  $q>4$. In both of these cases, the exponential decay of correlations under $\pi_{\mathbb Z^2}^0$ ensured that all such bridges would be, with high probability, microscopic: in~\cite{BlSi15} these bridges were negligible after restricting attention to  \emph{side-homogenous} (wired or free on sides) boundary conditions, while in~\cite{GL16}, relevant boundary segments could be disconnected from one another by brute-force modifications. In contrast, in the present setting at the critical point of a continuous phase transition, the power-law decay of correlations precludes such  techniques; thus,  at $p=p_c(q)$ obtaining sharp bounds on the number of bridges becomes not only necessary, but also substantially more delicate.
  
To convert the upper bounds on bridges to an upper bound on the mixing time, our dynamical analysis  restricts its attention to boundary conditions with $|\Gamma^e| = O(\log n)$  for every $e\in \partial \Lambda$; we call such boundary conditions ``typical" (see Definitions~\ref{def:xi}--\ref{def:upsilon}) and observe that wired and free boundary conditions are both typical. 

\subsubsection{Refined dynamical scheme}\label{subsub:dynamical}

To maintain ``typical" (as opposed to worst-case) boundary conditions throughout the multi-scale analysis,  we turn to the Peres--Winkler censoring inequalities~\cite{PW13} for monotone spin systems, that were used in~\cite{MaTo10} (then later in~\cite{LMST12}) for the Ising model under ``plus" boundary, a class of boundary conditions that \emph{dominate the plus phase} (observe that, in contrast, ``typicality" is not monotone).

A major issue when attempting to carry out this approach---adapting the analysis of the low temperature Ising model to the critical FK model---is the stark difference between the nature of the corresponding equilibrium estimates needed to drive the multi-scale analysis. In the former, crucial to maintaining ``plus'' boundary conditions throughout the induction of~\cite{MaTo10} was that in the presence of favorable boundary conditions, the multiscale analysis could be controlled except with super-polynomially small probability. 
This yielded a bound on coupling the dynamics started at the extremal (plus and minus) initial configurations, which a standard union bound over the $O(n^2)$ sites of the box (see Fact~\ref{fact:init-config-comparison}) then transformed to a bound on $\tmix$.

We wish to couple the dynamics from the extremal (wired and free) initial configurations, since arbitrary starting states may induce boundary conditions on the smaller scales that are not ``typical.'' Even in the ideal scenario where the induced boundary conditions have \emph{no} bridges, though, the probability that we fail to couple the dynamics from wired and free initial states is at least $1-\epsilon$  (as per the RSW estimates). In particular, even in this ideal setting, 
we could not afford  the $O(n^2)$ factor of translating this to a bound on $\tmix$---the actual setting is far worse, replacing the failure probability by $1-n^{-c}$ (Proposition~\ref{prop:point-to-point-crossing}). An approach based on the classical block dynamics recursion on the spectral gap (see~\cite{Ma97} and the proofs in~\cite{LScritical,GL16}) would force one to analyze the dynamics under worst-case boundary, whereas we would like to restrict attention to the typical boundary conditions encountered throughout the dynamical process.

Therefore, in Definition~\ref{def:censoring-block} we construct a censored dynamics that mimics a block dynamics chain, and bound the total variation distance between their distributions in terms of the probability that we encounter unfavorable boundary conditions on the sub-blocks before mixing (Proposition~\ref{prop:censoring-block-dynamics}). By doing so, we compare the censored dynamics to the block dynamics with boundary conditions modified to eliminate all $O(\log n)$ bridges over certain edges, paying a cost of $n^{c}$ in the mixing time. We then let the block dynamics run some $n^{c}$ rounds (paying a union bound for the probability of ever encountering atypical boundary conditions, bounded in Corollary~\ref{cor:retain-bc-2}). The block dynamics would be making $n^c$ many independent attempts at coupling (possible due to the absence of bridges over a particular edge) beyond a dual-crossing whose existence has probability $n^{-c/2}$: see Lemma~\ref{lem:systematic-bound}. This polynomial bound on the block dynamics coupling time translates to a quasi-polynomial bound for the censored dynamics via $O(\log n)$ recursions onto smaller scale blocks, yielding the bound on $\tmix$. 

Finally, since periodic boundary conditions do not fall in our class of ``typical" boundaries, in Section~\ref{sub:proof-mainthm} we extend this bound first to cylinders, and then to the torus. 

\section{Preliminaries}\label{sec:preliminaries}

In this section we define the random cluster (FK) model and the FK dynamics that will be the object of study in this paper. We also recall various important results from the equilibrium theory of the FK model in~\S\ref{sub:prelim-equilibrium}, and the general theory of Markov chain mixing times (\S\ref{sub:prelim-dynamics}), including, in particular, that of monotone chains. Throughout the paper, for sequences $f(N),g(N)$ we will write $f\lesssim g$ if there exists a constant $c>0$ such that $f(N)\leq cg(N)$ for all $N$ and $f\asymp g$ if $f(N)\lesssim g(N)\lesssim f(N)$.

For a more detailed exposition of much of~\S\ref{sub:prelim-equilibrium} see~\cite{Gr04}, and for a more detailed exposition of the main ideas in~\S\ref{sub:prelim-dynamics} see~\cite{LPW09}.

\subsection{The FK model}\label{sub:prelim-equilibrium}
Throughout the paper, we identify an FK configuration $\omega \subset E$ with an assignment $E\to \{0,1\}$, referring to an edge $e$ with $\omega(e)=1$ as \emph{open} and to an edge $e$ with $\omega(e)=0$ as \emph{closed}.  We will drop the subscripts $p,q$ from $\pi_{G,p,q}$ whenever their value is clear from the context.

\subsubsection*{Boundary conditions}
For a graph $G$, one can fix an arbitrary subset of the vertices to be the boundary $\partial G\subset V(G)$ so that we can define \emph{boundary conditions} $\xi$ on $\partial G$ as follows. First augment $G$ to $G'$ by adding edges between any pair of vertices in $\partial G$ that do not share an edge in $E$; then letting $E'(\partial G)$ be the set of all edges between pairs of vertices in $\partial G$, a boundary condition $\xi$ is just an FK configuration in $\{0,1\}^{E'(\partial G)}$. Every boundary condition $\xi$ can therefore alternatively be thought of as a partition of $\partial G$ given by the clusters of $\xi$. The random cluster measure with these boundary conditions is then given by counting the number of clusters in a configuration as the number of clusters in the configuration on $G'$, fixing the restriction to $E'(\partial G)$ to be $\xi$. 

The \emph{wired} boundary condition consists of just one component consisting of all $v\in \partial G$, and the \emph{free} boundary condition consists of only singletons, each corresponding to one vertex in $\partial G$. For ease of notation, in the former case we say $\xi=1$ and in the latter case we say $\xi=0$. Denote the interior of $G$ as the subgraph of $G$ given by $G^o=(V-\partial G,E- E'(\partial G))$.

For a graph $G= (V,E)$ and a subgraph $(R,E(R))$ where $R\subset V$, denote by $\omega \restriction_{R}$ or $\omega\restriction_{E(R)}$ the restriction of the configuration $\omega\in \{0,1\}^E$ to $E(R)$. 
   
For two domains $R_1\subset R_2$, we say that a configuration $\omega$ on $R_2$ with boundary condition $\xi$ induces a boundary condition $\zeta$ on $R_1$ if $\zeta$ is the boundary condition induced by $\omega\restriction_{R_2-R_1^o}\cup \xi$: here the union of two boundary conditions denotes the partition arising from all connections through $\omega\restriction_{R_2-R_1^o}$ and $\xi$ together. In such situations, when we write $\omega\restriction_{\partial R_1}$ we mean the boundary condition induced on $R_1$ by $\omega$ on $R_2-R_1^o$ and $\xi$. If two sites $x,y$ are in the same component of a boundary condition $\xi$, we write $x\stackrel{\xi}\longleftrightarrow y$.

\subsubsection*{Domain Markov property}
For any $q$, the FK model satisfies the Domain Markov property: that is to say, for any graph $G$ and any boundary conditions $\xi$ on $\partial G$, for every subgraph $F\subset G$ and FK configuration $\eta$ on $E(G)-E(F)$,
\[\pi_{F}^{\xi\cup \eta}(\omega\in \cdot)=\pi_{G}^\xi(\omega\restriction_{E(F)}\in \cdot\mid \omega\restriction_{E(G)-E(F)}=\eta)\,.
\]

\subsubsection*{Monotonicity and FKG inequalities}

There is a natural partial ordering to configurations and boundary conditions in the FK model: for two configurations $\omega,\omega'\in \Omega$ we say $\omega\geq \omega'$ if $\omega(e)\geq \omega'(e)$ for every edge $e\in E$, and for any two boundary conditions $\xi,\xi'$ we say that $\xi\geq \xi'$ if $x\stackrel{\xi'}\longleftrightarrow y$ implies $x\stackrel{\xi}\longleftrightarrow y$ for every pair of sites $x,y\in V(\partial G)$, which is to say that $\xi'$ corresponds to a finer partition than $\xi$ of the vertices $V(\partial G)$.

An event $A$ is \emph{increasing} if it is closed under addition of edges so that if $\omega\leq \omega'$, then $\omega\in A$ implies $\omega'\in A$; analogously, it is decreasing if it is closed under removal of edges. The FK model satisfies \emph{FKG inequalities} for all $q\geq 1$ (i.e., it is positively correlated) so that for any two increasing events $A,B$,
\[\pi_G^\xi(A\cap B)\geq \pi_G ^\xi (A)\pi_G^\xi(B)\,.
\]
This leads to \emph{monotonicity in boundary conditions} for all $q\geq 1$. For any pair of boundary conditions $\xi,\xi'$ with $\xi'\leq \xi$, and any increasing event $A$,
\[\pi ^{\xi'}_G(A)\leq \pi^{\xi}_G(A)\,,
\]
whence we say that $\pi_G^{\xi}$ stochastically dominates ($\succeq$) $\pi_G^{\xi'}$.

\subsubsection*{Planar duality}
For the purposes of this paper, we now restrict our attention to graphs that are subsets of $\mathbb Z^2$, the graph with vertices at $\mathbb Z^2$ and edges between nearest-neighbors in Euclidean distance. For a connected graph $G\subset \mathbb Z^2$, let $\partial G$ consist of all $v\in V$ having a $\mathbb Z^2$-neighbor in $\mathbb Z^2-G$.

For a graph $G\subset \mathbb Z^2$ (in fact for any planar graph), there is a powerful duality between the FK model on $G$ and the FK model on the planar dual graph of $G$, denoted $G^\ast$. 
Given a planar graph $G$, we can identify to any configuration $\omega$ a dual configuration $\omega^\ast$ on $G^\ast$ where (identifying to each $e\in E(G)$, the unique dual edge $e^\ast$ passing through $e$), $\omega^\ast(e^\ast)=1$ if and only if $\omega(e)=0$. We sometimes identify edges with their midpoints.

For any boundary condition $\xi$ on a planar graph $G$, for all $q\geq 1$, the map $p\mapsto p^*$ where $p p^\ast=q(1-p)(1-p^\ast)$ can be seen to satisfy
\[\pi_{G,p,q}^{\xi}\stackrel{d}=\pi_{G^\ast,p^\ast,q}^{\xi^\ast}
\]
where the boundary condition $\xi^\ast$ is determined on a case by case basis so that $(\xi^\ast)^\ast =\xi$ (in particular, the wired and free boundary conditions are dual to each other).

\subsubsection*{Planar notation}

The graphs we consider will be rectangular subsets of $\mathbb Z^2$, denoted,
\[\Lambda_{n,m}=\llb 0,n \rrb \times \llb 0,m\rrb\,,
\]
where throughout the paper, $\llb 0,n\rrb:=\{k\in \mathbb Z: 0\leq k\leq n\}$. When $n,m$ are fixed and understood from context, we drop them from the notation. Then we denote the sides of $\partial \Lambda$ by $\partial_\west \Lambda=\{0\}\times \llb 0,m\rrb$ and the analogously defined $\partial_\north \Lambda,\partial_\south \Lambda, \partial_\east \Lambda$. We collect multiple sides into their union by including both subscripts, e.g., $\partial_{\north,\south}  \Lambda=\partial_\north\Lambda\cup \partial_\south\Lambda$.

Consider the FK model on a rectangular graph $\Lambda$. For any $x,y\in \Lambda$, we write $x\longleftrightarrow y$ if $x$ and $y$ are part of the same component of $\omega$ on $\Lambda-\partial \Lambda$ (there exists a connected set of open edges with one edge adjacent $x$ and one adjacent $y$). For a subset $R\subset \Lambda$, we write $x\stackrel{R}\longleftrightarrow y$ to denote the existence of such a crossing within $R-\partial R$, and for two sets $A,B\subset \Lambda$ we write $A\longleftrightarrow B$ if there exists $a\in A,b\in B$ such that $a\longleftrightarrow b$.

We now define the vertical crossing event for a rectangle $\Lambda$ as
\[\cC_v(\Lambda)=\partial_\south \Lambda \stackrel{\Lambda}\longleftrightarrow \partial_\north \Lambda\,,
\]
and analogously define the horizontal crossing event $\cC_h(\Lambda)$. One can similarly define the dual-crossing events $\cC^\ast_v(\Lambda),\cC^\ast_h (\Lambda)$ (where abusing notation, the fact that the crossings occur on $\Lambda^\ast$ is understood) and more generally, writing $x^\ast\stackrel{\ast}\longleftrightarrow y^\ast$ denotes the existence of a connection in the dual graph. Then, crucially, planarity and self-duality of $\mathbb Z^2$ imply that for a rectangle $\Lambda$, we have $\cC_v (\Lambda)=(\cC_h^\ast (\Lambda))^c$.

Finally for two rectangles $\Lambda'\subset \Lambda$, an annulus $A=\Lambda-\Lambda'$, denote the existence of an \emph{open circuit} (connected set of open edges with nontrivial homology w.r.t.\ $A$) by $\cC_o(A)$.

\subsubsection*{Gibbs measures and the FK phase transition}
Infinite-volume Gibbs measures can be derived by taking limits of $\pi_{\Lambda_{n,n}}^{\xi_n}$ as $n\to\infty$ for a prescribed sequence of boundary conditions $\xi_n$: natural choices of such boundary conditions are $\xi_n=1,0$ or periodic so that the graph is $(\mathbb Z/n\mathbb Z)^2$. If such limits exist weakly, we denote them by $\pi_{\mathbb Z^2}^\xi$, and they satisfy the DLR conditions (see, e.g.,~\cite{Gr04}).

By the self-duality of $\mathbb Z^2$ (up to translation), one sees that at the fixed point of $p\mapsto p^\ast$, ($p_{\sd}=\frac {\sqrt{q}}{1+\sqrt{q}}$), one has $\pi_{\mathbb Z^2}^1\stackrel{d}=\pi_{(\mathbb Z^2)^\ast}^0$, and we say the model is self-dual. The FK model for $q\geq 1$ exhibits a sharp phase transition between a high temperature phase ($p$ small) where there is no infinite component, and a low temperature phase ($p$ large) where there is almost surely an infinite component, through a critical point $p_c(q)=\inf \{p\in[0,1]:\pi_{\mathbb Z^2,p,q}(0\longleftrightarrow \infty)>0\}$.
It was proved in~\cite{BeDu12} that for all $q\geq 1$, $p_c(q)=p_{\sd}(q)$, and later in~\cite{DST15} that for all $q\in [1,4]$, we have that $\pi_{\mathbb Z^2,p_c,q}^1(0\longleftrightarrow \infty)=0$, implying $\pi_{\mathbb Z^2,p_c,q}^1\stackrel{d}= \pi_{\mathbb Z^2,p_c,q}^0$ and continuity of the phase transition (these were established much earlier for the cases of bond percolation $q=1$ and the Ising model $q=2$).

\subsubsection*{Russo--Seymour--Welsh estimates}
A key ingredient in  the proof of the continuity of the phase transition for all $q\in [1,4]$ was the following set of Russo--Seymour--Welsh (RSW) type estimates on crossing probabilities of rectangles uniform in the boundary conditions (such results were obtained for $q=1$ in~ \cite{Ru78} and for $q=2$ in~\cite{DHN11}), which were central to all available mixing time upper bounds at $p_c$ on $\Z^2$ (see~\cite{LScritical,GL16}):

\begin{theorem}[{\cite[Theorem 3]{DST15}}] \label{thm:RSW}
Let $q\in(1,4]$ and consider the critical FK model on $\Lambda_{n,n'}$ where $n'=\lfloor \alpha n\rfloor $ for some $\alpha>0$. For every $\epsilon>0$, if $R_\epsilon=\llb \epsilon n,(1-\epsilon)n \rrb \times \llb \epsilon n',(1-\epsilon)n' \rrb$, there exists a $p(\alpha,\epsilon,q)>0$ such that,
\[\pi_\Lambda^0 (\cC_v (R_\epsilon))\geq p\,.
\]
\end{theorem}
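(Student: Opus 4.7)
The plan is to adapt the classical Russo--Seymour--Welsh (RSW) strategy to the dependency structure of the FK model, using self-duality at $p_c$ and the FKG inequality as substitutes for the reflection-independence arguments available for Bernoulli percolation.

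The first ingredient is a quantitative crossing estimate for a single square.  At $p_c$, self-duality and continuity of the phase transition give $\pi_{\Lambda_{n,n}}^1 \stackrel{d}= \pi_{(\Lambda_{n,n})^\ast}^0$, and planarity gives $\cC_v(\Lambda_{n,n}) = (\cC_h^\ast(\Lambda_{n,n}))^c$.  Combined with monotonicity in boundary conditions and the FKG square-root trick applied to the four increasing events ``there is an open connection from a central dyadic box to $\partial_\star \Lambda$'' for $\star\in\{\north,\south,\east,\west\}$, this forces $\pi_{\Lambda_{n,n}}^0(\cC_v) \geq c_0(q) > 0$ uniformly in $n$.

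The second ingredient is to pass from square crossings to crossings of rectangles of arbitrary fixed aspect ratio $\alpha > 0$.  Tall rectangles ($\alpha \leq 1$) are straightforward: one stacks $O(1/\alpha)$ overlapping squares, combines a vertical crossing of each with a horizontal open circuit in each overlap annulus (a circuit probability bounded below by FKG and four square-crossing estimates), and concatenates.  For wide rectangles ($\alpha > 1$), the technically hard direction, one iterates a gluing step: condition on the leftmost vertical crossing of the leftmost square in the rectangle, use monotonicity in boundary conditions to stochastically dominate the conditional measure on the complement by an unconditional $\pi^0$, and paste a horizontal bridge that extends the crossing to the right.  After a constant (depending on $\alpha$) number of iterations this yields $\pi^0_{\Lambda_{n,\lfloor\alpha n\rfloor}}(\cC_v) \geq c_1(\alpha,q) > 0$.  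To finally localize the crossing inside $R_\epsilon$, apply the rectangle-crossing bound to four thin strips of width $\epsilon n \wedge \epsilon n'$ adjacent to the four sides of $\partial\Lambda$ to produce, with probability bounded below, a dual circuit in $\Lambda - R_\epsilon$ separating $R_\epsilon$ from $\partial\Lambda$; combine this (via FKG) with a vertical crossing of a slightly enlarged interior rectangle to conclude.

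The main obstacle is the square-to-wide-rectangle step.  The classical percolation RSW proof relies on reflections across a vertical axis and the conditional independence of the configuration on either side of that axis, both of which fail for FK at $q > 1$.  Replacing them by the conditioning-plus-monotonicity scheme above requires the continuity of the phase transition, which is only available in the regime $q\in(1,4]$; this is the source of both the quantitative dependence $p=p(\alpha,\epsilon,q)$ and the restriction on the range of $q$.
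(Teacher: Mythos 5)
You are attempting to prove a theorem that the paper does not prove: Theorem~\ref{thm:RSW} is cited verbatim from \cite[Theorem~3]{DST15}, and the authors supply no proof of their own, so there is no internal argument to compare against. Evaluated on its own merits, your sketch reproduces the classical RSW strategy (square crossing implies long-rectangle crossing, via conditioning and gluing), and this is exactly what is known \emph{not} to work for FK percolation at $q>1$; circumventing it is the entire content of~\cite{DST15}.

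The central gap is in the wide-rectangle step. You propose to condition on the leftmost vertical crossing $\gamma$ of the leftmost square, then to ``use monotonicity in boundary conditions to stochastically dominate the conditional measure on the complement by an unconditional $\pi^0$, and paste a horizontal bridge.'' But conditioning on $\gamma$ being the \emph{leftmost} crossing reveals that the region to its west is dual-connected, which imposes \emph{free} boundary conditions along $\gamma$ itself; the conditional law east of $\gamma$ therefore does \emph{not} stochastically dominate anything favorable, and producing the horizontal bridge emanating from $\gamma$ would require a crossing lower bound uniform over precisely these unfavorable boundary conditions --- which is the statement you are trying to prove. This is the exact step where independence rescues the $q=1$ argument and where FKG and monotonicity alone cannot; overcoming it required the genuinely new machinery developed in~\cite{DST15}, not a repaired conditioning scheme.

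Two secondary issues. First, your base case --- a square-crossing bound under \emph{free} boundary conditions uniform in $n$ --- is itself non-trivial: self-duality gives a Dobrushin-type identity at $p_c$, not a free-boundary lower bound, and for $q>4$ the analogous free-boundary crossing probability in fact decays to $0$ as $n\to\infty$; so the square-root trick as you invoke it does not close on its own. Second, your closing remark that the argument ``requires the continuity of the phase transition'' is circular: in~\cite{DST15}, continuity for $q\in[1,4]$ is a \emph{consequence} of these RSW estimates, not a hypothesis available for proving them.
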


\begin{corollary}\label{cor:annulus-circuit}
Let $q\in(1,4]$ and consider the critical FK model on $\Lambda_{n,n'}$. Let $R_\epsilon$ be as in Theorem~\ref{thm:RSW}; then there exists a $p(\alpha,\epsilon,q)>0$ such that
\[\pi_{\Lambda}^0(\cC_o(\Lambda-R_\epsilon))\geq p\,.
\]
\end{corollary}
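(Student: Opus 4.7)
The strategy is to construct an open circuit around $R_\epsilon$ as the union of four long open paths crossing rectangular strips along the four sides of the annulus $\Lambda-R_\epsilon$. Fix $\delta\in(0,\epsilon)$ and set
\[
B=\llb 0,n\rrb\times\llb 0,\delta n'\rrb,\qquad T=\llb 0,n\rrb\times\llb (1-\delta)n',n'\rrb,
\]
\[
L=\llb 0,\delta n\rrb\times\llb 0,n'\rrb,\qquad R=\llb (1-\delta)n,n\rrb\times\llb 0,n'\rrb.
\]
Since $\delta<\epsilon$, each of $B,T,L,R$ is contained in $\Lambda-R_\epsilon$, and every pair of adjacent strips (e.g.\ $B$ and $L$) overlaps in a corner rectangle with side lengths $\asymp\delta n$ and $\asymp\delta n'$. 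The four strips together enclose the central region $R_\delta\supset R_\epsilon$.

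For each strip I invoke Theorem~\ref{thm:RSW}, swapping the roles of $n$ and $n'$ when necessary so that the ``vertical'' crossing guaranteed by the theorem corresponds to a crossing in the long direction of the strip. Combined with monotonicity in boundary conditions---the boundary condition on the strip induced by the free boundary on $\Lambda$ dominates free---this yields, under $\pi_\Lambda^0$, an open crossing of the strip in its long direction with probability at least some $p=p(\alpha,\epsilon,q)>0$. Since each of the four long-crossing events is increasing, the FKG inequality gives that all four events occur simultaneously with probability at least $p^4>0$.

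On this joint event, at each of the four corner rectangles there is an open left--right crossing (from the adjacent horizontal strip) and an open top--bottom crossing (from the adjacent vertical strip); by planarity any two such crossings of a common rectangle must share an open vertex. Concatenating the four long crossings through these four corner intersections produces a connected set of open edges in $\Lambda-R_\epsilon$ that winds once around $R_\delta$, and hence around $R_\epsilon$, realizing the event $\cC_o(\Lambda-R_\epsilon)$.

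The one subtlety---and the main point to verify carefully---is that Theorem~\ref{thm:RSW} provides only a crossing of the \emph{inner} sub-rectangle $R_{\epsilon'}$ of each strip rather than a full long-direction crossing. I resolve this by choosing the inner parameter $\epsilon'$ much smaller than $\delta$, so that the endpoints of each inner crossing already lie inside the two adjacent corner overlap rectangles; equivalently, one can apply Theorem~\ref{thm:RSW} to two overlapping sub-rectangles of each strip and glue the inner crossings via an FKG-based gluing argument to obtain a long crossing reaching both corners.
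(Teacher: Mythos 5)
Your picture-frame strategy is the natural approach, and for $1<q<4$ it does go through if you invoke Proposition~\ref{prop:true-RSW} in place of Theorem~\ref{thm:RSW}: that gives genuine full-rectangle crossings, which then traverse the corner overlaps and the planarity gluing works as you describe. But you cite Theorem~\ref{thm:RSW}, and for $q=4$ (which the corollary must cover) it is all that is available; there the gluing step fails in exactly the place you flag. The horizontal crossing of $B=\llb 0,n\rrb\times\llb 0,\delta n'\rrb$ produced by Theorem~\ref{thm:RSW} is a crossing of the inner rectangle $R_{\epsilon'}$ of $B$, so its leftmost point has $x$-coordinate $\epsilon' n$ --- strictly inside the corner box $Q=\llb 0,\delta n\rrb\times\llb 0,\delta n'\rrb$, never on its left side. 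It therefore does \emph{not} traverse $Q$ east--west, and your claim ``at each of the four corner rectangles there is an open left--right crossing from the adjacent horizontal strip'' is false. Your resolution (i) does not help: shrinking $\epsilon'$ moves the endpoint closer to, but never onto, the left side of $Q$, so the crossing still does not cross $Q$. Resolution (ii) is also obstructed at the corner. To force the horizontal inner crossing of a wide strip $A$ (dimensions $W_A\times H_A$ with $W_A>H_A$) and the vertical inner crossing of a tall strip $B$ ($W_B\times H_B$ with $H_B>W_B$) to meet by the usual ``each fully crosses a common rectangle in perpendicular directions'' argument, one needs the nesting $[\epsilon' H_A,(1-\epsilon')H_A]\subset[\epsilon'' H_B,(1-\epsilon'')H_B]$ together with $[\epsilon'' W_B,(1-\epsilon'')W_B]\subset[\epsilon' W_A,(1-\epsilon')W_A]$, which forces $H_B/H_A\le\epsilon'/\epsilon''\le W_B/W_A$. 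That interval is empty whenever $W_A/H_A>W_B/H_B$, i.e.\ in exactly the picture-frame geometry (one strip wide, the other tall); swapping $A$ and $B$ gives the mirror-image contradiction. So the RSW shrinkage is not a cosmetic nuisance you can absorb by choosing $\epsilon'$ small; it is a genuine obstruction at the corners.

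The corollary is nevertheless standard, but the repair requires a change of geometry rather than a change of parameter. One workable version: go around $R_\epsilon$ with a chain of $O(1/\epsilon)$ overlapping cells of side length $\asymp\epsilon(n\wedge n')$. At each corner cell $C$, apply Theorem~\ref{thm:RSW} to obtain \emph{both} a horizontal and a vertical inner crossing of $(C)_{\epsilon_0}$; since these two cross the \emph{same} rectangle $(C)_{\epsilon_0}$ in perpendicular directions, they do intersect by planarity. Along each side, consecutive parallel inner crossings are glued by a tall-and-thin auxiliary rectangle whose inner sub-rectangle is narrower in the long direction and wider in the short direction than each of the two to be glued --- for crossings of the \emph{same} orientation the nesting conditions above are compatible, and the auxiliary rectangle can be kept inside $\Lambda$. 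FKG then combines the $O(1/\epsilon)$ crossing events at a cost depending only on $\alpha,\epsilon,q$, yielding $\pi^0_\Lambda(\cC_o(\Lambda-R_\epsilon))\ge p(\alpha,\epsilon,q)>0$.
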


When $1< q<4$, we have the a stronger bound uniform in boundary conditions:
\begin{proposition} [{\cite[Theorem~7]{DST15}}]\label{prop:true-RSW}
Let $q\in(1,4)$ and consider the critical FK model on $\Lambda_{n,n'}$ where $n'=\lfloor \alpha n\rfloor $ for $\alpha>0$. There exists $p(\alpha,q)>0$ such that,
\[\pi_\Lambda^0 (\cC_v (\Lambda))\geq p\,.
\]
\end{proposition}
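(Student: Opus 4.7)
My plan is to bootstrap the inner-box estimate of Theorem~\ref{thm:RSW} up to a full-box crossing estimate by a classical Russo--Seymour--Welsh gluing argument, exploiting FKG, self-duality, and the fact that $\cC_v(\Lambda)$ is an increasing event. Since free boundary conditions minimize the probability of any increasing event by monotonicity, once one has $\pi_\Lambda^0(\cC_v(\Lambda)) \geq p$, the same bound holds uniformly in $\xi$; hence I would focus on the free boundary case.

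First, I would apply Theorem~\ref{thm:RSW} with a small fixed $\epsilon>0$ to produce a vertical crossing of the inner rectangle $R_\epsilon \subset \Lambda$ under $\pi_\Lambda^0$ with probability at least some $p_0(\alpha,\epsilon,q) > 0$. The remaining task is to extend this crossing across the two horizontal strips
\[
S_- = \llb 0,n\rrb \times \llb 0, \epsilon n' \rrb, \qquad S_+ = \llb 0,n\rrb \times \llb (1-\epsilon)n', n' \rrb,
\]
so that it reaches $\partial_\south\Lambda$ and $\partial_\north\Lambda$ respectively. To handle $S_-$, I would cover it by $O(1/\epsilon)$ many overlapping squares of side $\Theta(\epsilon n')$ and invoke Theorem~\ref{thm:RSW} inside each square to obtain, with positive probability, both a vertical crossing of each square reaching to within a constant fraction of its top and bottom, and horizontal crossings that chain consecutive squares together. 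FKG then allows these $O(1/\epsilon)$ crossings to be combined into a single top-to-bottom crossing of $S_-$ touching $\partial_\south\Lambda$; the same argument applied to $S_+$ produces a crossing touching $\partial_\north\Lambda$. A further FKG step combines the crossings of $S_-$, $R_\epsilon$, and $S_+$ with $O(1)$ additional horizontal crossings of squares centered on the separating lines at heights $\epsilon n'$ and $(1-\epsilon) n'$, yielding a single open path from $\partial_\south\Lambda$ to $\partial_\north\Lambda$ in $\Lambda$ with probability bounded below by some $p(\alpha,q) > 0$ independent of $n$.

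The main obstacle is the boundary-touching step inside the strips $S_\pm$: Theorem~\ref{thm:RSW} only supplies crossings of \emph{inner} rectangles, bounded away from their sides, so extracting crossings of $S_-$ that actually reach $\partial_\south\Lambda$ requires an additional ``pushing'' step. In practice this is accomplished via self-duality together with the absence of a dual horizontal crossing separating a candidate vertical path from $\partial_\south\Lambda$. The exclusion of $q=4$ enters here precisely because the RSW iteration used to push crossings to the boundary must remain uniform under the weakest possible boundary conditions in arbitrarily thin rectangles; for $q\in(1,4)$ the polynomial decay exponents established in~\cite{DST15} are strict enough to sustain this iteration, whereas at $q=4$ the marginal nature of the critical point precludes the analogous control.
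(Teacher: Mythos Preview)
The paper does not prove this proposition at all: it is stated as a direct citation of \cite[Theorem~7]{DST15}, with no argument given. So there is nothing to compare your proposal against except the original proof in~\cite{DST15}.

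Your sketch correctly identifies the overall architecture---bootstrap Theorem~\ref{thm:RSW} to a full-box crossing via FKG gluing---and you also correctly isolate the genuine obstacle, the boundary-touching step. But your handling of that step is not a proof. Tiling $S_-$ by squares of side $\Theta(\epsilon n')$ and applying Theorem~\ref{thm:RSW} inside each only produces crossings of \emph{inner} sub-rectangles of those squares, still bounded away from $\partial_\south\Lambda$ by a fixed fraction of the square's side; iterating this across scales reaches the boundary only after $\Theta(\log n)$ steps, and the naive product of RSW probabilities then decays polynomially in $n$ rather than staying bounded. Your appeal to ``self-duality together with the absence of a dual horizontal crossing'' does not help under free boundary conditions, since free primal boundary is wired dual boundary, which \emph{favors} such blocking dual crossings near $\partial_\south\Lambda$. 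Finally, your closing remark that ``the polynomial decay exponents established in~\cite{DST15} are strict enough to sustain this iteration'' is circular: those exponent bounds for $q<4$ are precisely the content of~\cite[Theorem~7]{DST15}, which is the statement you are trying to prove.

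The actual proof in~\cite{DST15} is substantially harder than RSW gluing. It uses the parafermionic observable to obtain quantitative arm-event estimates specific to $q<4$, and then runs a renormalization argument to upgrade the inner-box RSW of Theorem~\ref{thm:RSW} to the boundary-touching version. That input is not derivable from Theorem~\ref{thm:RSW} and FKG alone, and it is exactly what fails (conjecturally) at $q=4$. For the purposes of this paper you should treat Proposition~\ref{prop:true-RSW} as a black box from~\cite{DST15}, as the authors do.
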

Such a bound is in fact not expected to hold for $q=4$, where, for instance, it is believed (see~\cite{DST15}) that under free boundary conditions the crossing probability goes to 0 as $N\to\infty$.

\subsection{Markov chain mixing times}\label{sub:prelim-dynamics}
In this section we introduce the dynamical notation we will be using along with several important results in the theory of Markov chain mixing times, and in particular the theory of Markov chains on monotone spin systems, that we will use in the proof of Theorem~\ref{mainthm:1}.

\subsubsection*{Mixing times} Consider a Markov chain $(X _t)_{t\geq 0}$ with finite state space $\Omega$, and (in discrete time) transition kernel $P$ with invariant measure $\pi$. In the continuous-time setup, instead of $P^t$ we consider, for $\omega_0,\omega\in\Omega$, the heat kernel
\[H_t(\omega_0,\omega)=\mathbb P_{\omega_0}(X_t=\omega)=e^{t\mathcal L}(\omega_0,\omega)\,,
\]
where $\mathbb P_{\omega_0}$ is the probability w.r.t.\ the law of the chain $(X_t)_{t\geq 0}$ given $X_0=\omega_0$, and $\mathcal L$ is the infinitesimal generator for the Markov process.

For two measures $\mu,\nu$ on $\Omega$, define the total variation distance
\[\|\mu-\nu\|_\tv=\sup_{A\subset \Omega} |\mu(A)-\nu(A)|=\inf\{\mathbb P(X\neq Y)\mid X\sim\mu,Y\sim\nu\}\,,
\]
where the infimum is over all couplings $(\mu,\nu)$.
The worst-case total variation distance of $X_t$ from $\pi$ is denoted
\[d_{\tv}(t)=\max_{\omega_0\in\Omega}\|\mathbb P_{\omega_0}(X_t\in \cdot)-\pi\|_\tv\,,
\]
and the \emph{total variation mixing time} of the Markov chain is given by (for $\epsilon\in (0,1)$),
\[\tmix(\epsilon)=\inf \{t\geq 0:d_\tv(t)\leq \epsilon\}\,.
\]

For any $\epsilon\leq \frac 14$, $\tmix(\epsilon)$ is submultiplicative and the convergence to $\pi$ in total variation distance is thenceforth exponentially fast. As such, we write $\tmix$, omitting the parameter $\epsilon$ to refer to the standard choice $\epsilon=1/(2e)$.

\subsubsection*{The FK dynamics}
The present paper is almost exclusively concerned with continuous-time \emph{heat-bath Glauber dynamics} $(X_t)_{t\geq 0}$ for the random cluster model on $\Lambda$ with boundary conditions $\xi$: this is a reversible Markov chain w.r.t.\ $\pi_{\Lambda}^\xi$ defined as follows: assign i.i.d.\ rate-$1$ Poisson clocks to every edge in $\Lambda-\partial \Lambda$; whenever the clock at an edge rings, resample its edge value according to $\pi_{\Lambda}^\xi(\omega\restriction_{e}\in \cdot\mid \omega\restriction_{\Lambda-\{e\}} =X_t\restriction_{\Lambda-\{e\}})$.
In particular, for $e=(v,w)\in \Lambda-\partial \Lambda$, the transition rate from $\omega$ to $\omega \cup \{e\}$ is
\begin{align*}
	\begin{cases}
	p & \mbox{if }v\longleftrightarrow w\mbox{ in }{\Lambda-\{e\}\cup \xi}\,,\\
	p/[p+q(1-p)] & \mbox{otherwise}\,.
	\end{cases}
\end{align*}
An alternative view of the heat-bath dynamics is the \emph{random mapping representation} of this dynamics: the edge updates correspond to a sequence $(J_i,U_i,T_i)_{i\geq 1}$, in which $T_1<T_2<\ldots$ are the clock ring times, the $J_i$'s are i.i.d.\ uniformly selected edges in $\Lambda-\partial \Lambda$, and the $U_i$'s are i.i.d.\ uniform random variables on $[0,1]$: at time $T_i$, for $J_i=(v,w)$, the dynamics replaces the value of $\omega(J_i)$ by $\one\{U_i\leq p\}$ if $v\longleftrightarrow w$ in $\Lambda-\{J_i\}\cup \xi$ and by $\one\{U_i\leq p/[p+q(1-p)]\}$ otherwise.

\subsubsection*{Monotonicity}As a result of the monotonicity of the FK model for $q\geq 1$, the heat-bath Glauber dynamics for the FK model is \emph{monotone}: for two initial configurations $\omega'\geq \omega$, we have that for all times $t\geq 0$,
\[H_t (\omega',\cdot)\succeq H_t (\omega,\cdot)\,.
\]
Using the random mapping representation, we define the \emph{grand coupling} of the set of Markov chains with all possible initial configurations, which corresponds to the identity coupling of all three random variables $(J_i,U_i,T_i)_{i\geq 1}$ amongst all the chains; for $q\geq 1$, this coupling preserves the partial ordering on initial states for all subsequent times.

The following standard fact is obtained via the grand coupling (see, e.g.,~\cite[Eq.~2.10]{MaTo10} in the context of the Ising model, as well as ~\cite{GL16} in the context of the FK model).

\begin{fact}\label{fact:init-config-comparison} Consider a set $E$ and a monotone Markov chain $(X_t)_{t\geq 0}$ on $\Omega=\{0,1\}^{E}$ with extremal configurations $\{0,1\}$. For every $t\geq 0$,
\[d_{\tv}(t) \leq |E| \|\mathbb P_1(X_t\in \cdot)-\mathbb P_0 (X_t\in \cdot)\|_\tv\,.
\]
\end{fact}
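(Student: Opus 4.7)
\medskip
\noindent\emph{Proof proposal.} The plan is to combine three standard ingredients: (i) the triangle inequality to reduce convergence to $\pi$ to a comparison between chains from different initial states, (ii) the grand coupling (enabled by monotonicity) to reduce from arbitrary pairs of initial states to the extremal pair $(0,1)$, and (iii) a union bound over edges to convert an ``agree everywhere'' event into a single-edge statement that is controlled by the total-variation distance between $\P_1(X_t\in\cdot)$ and $\P_0(X_t\in\cdot)$.

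First I would write $\pi = \sum_{\omega'\in\Omega}\pi(\omega')\P_{\omega'}(X_t\in\cdot)$ by stationarity, so that for any $\omega_0$,
\[\|\P_{\omega_0}(X_t\in\cdot)-\pi\|_\tv \leq \max_{\omega,\omega'\in\Omega}\|\P_\omega(X_t\in\cdot)-\P_{\omega'}(X_t\in\cdot)\|_\tv.\]
Thus $d_\tv(t)$ is controlled by the worst pairwise distance between chains started at two initial configurations. To reduce this further, I would invoke the grand coupling $(X_t^\omega)_{\omega\in\Omega}$ driven by the shared rings/uniforms $(J_i,U_i,T_i)$. By monotonicity, for any $\omega\leq\omega'$ we have $X_t^0\leq X_t^\omega\leq X_t^{\omega'}\leq X_t^1$ pointwise; in particular if $X_t^0=X_t^1$ then $X_t^\omega=X_t^{\omega'}$, so the coupling bound on total variation gives
\[\max_{\omega,\omega'}\|\P_\omega(X_t\in\cdot)-\P_{\omega'}(X_t\in\cdot)\|_\tv \leq \P(X_t^0\neq X_t^1).\]

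The final step is to bound $\P(X_t^0\neq X_t^1)$ by $|E|$ times the quantity on the right-hand side of the fact. Since $X_t^0\leq X_t^1$ in the grand coupling, the event $\{X_t^0\neq X_t^1\}$ equals $\bigcup_{e\in E}\{X_t^0(e)<X_t^1(e)\}$, and a union bound yields
\[\P(X_t^0\neq X_t^1)\leq \sum_{e\in E}\bigl(\P_1(X_t(e)=1)-\P_0(X_t(e)=1)\bigr).\]
For each fixed $e$, the event $A_e=\{\omega:\omega(e)=1\}$ is increasing, so the $e$-th summand equals $|\P_1(X_t\in A_e)-\P_0(X_t\in A_e)|$, which is at most $\|\P_1(X_t\in\cdot)-\P_0(X_t\in\cdot)\|_\tv$. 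Summing over the $|E|$ edges gives the claimed bound. There is no real obstacle here: every step is routine once monotonicity, the grand coupling, and the increasing-event characterization of stochastic domination are in hand; the only mild subtlety is noting that the coupling bound from step (ii) uses the same driving randomness across all $\omega$, so that simultaneous equality at the extremes forces equality everywhere.
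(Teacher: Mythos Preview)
Your proof is correct and is precisely the standard argument the paper alludes to: the paper does not spell out a proof but simply cites the grand coupling (referencing~\cite{MaTo10} and~\cite{GL16}), and your three steps---triangle inequality via stationarity, sandwiching under the monotone grand coupling, and the edge-wise union bound---constitute exactly that argument. Nothing is missing or different in spirit.
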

Combined with the triangle inequality one obtains for the sub-multiplicative quantity
\[\bar d_\tv(t)=\max_{\omega_1,\omega_2\in \Omega} \|P_{\omega_1}(X_t\in\cdot)-\mathbb P_{\omega_2} (X_t\in\cdot)\|_{\tv}\,,
\]
that, in the FK setting,
\begin{equation}\label{eq-dbar-upper-bound}
d_\tv(t)\leq \bar d_\tv(t) \leq 2|E(G)\|\mathbb P_1 (X_t\in \cdot)-\mathbb P_0(X_t\in\cdot)\|_\tv\,.
\end{equation}

\subsubsection*{Censoring}
Key to our proof will be the Peres--Winkler~\cite{PW13}  censoring inequality for monotone systems. While the theorem of~\cite{PW13} and its subsequent applications in e.g.,~\cite{MaTo10,LMST12} are stated for spin systems whose sites are the vertices of the underlying graph, one can view the edges as the sites by considering the appropraite line graph; it is then easy to verify that the FK Glauber dynamics satisfies the conditions of~\cite[Theorem 1.1]{PW13}. Further, while Theorem~1.1 of \cite{PW13} is stated for the discrete-time dynamics, its formulation in continuous-time follows from the same proof: see also~\cite[Theorem 2.5]{MaTo10}. 
 
\begin{theorem}[\cite{PW13}]\label{thm:censoring}
Let $\mu_T$ be the law of continuous-time Glauber dynamics at time $T$ of a monotone system on $\Lambda$ with invariant measure $\pi$, whose initial distribution $\mu_0$ is such that $\mu_0/\pi$ is increasing. Set $0=t_0 < t_1 <\ldots < t_k = T$ for some $k$, let $(B_i)_{i=1}^k$ be subsets of $\Lambda$, and let $\bar\mu_T$ be the law at time $T$ of the censored dynamics, started at $\mu_0$, where only updates within $B_i$ are kept in the time interval $[t_{i-1},t_i)$. Then $\|\mu_T-\pi\|_\tv \leq \|\bar\mu_T-\pi\|_\tv$ and $\mu_T \preceq \bar\mu_T$; moreover, $\mu_T/\pi$ and $\bar\mu_T/\pi$ are both increasing.
\end{theorem}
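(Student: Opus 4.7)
The plan is to reduce everything to a one-step lemma at a single edge and then iterate it along the (random) skeleton of Poisson ring times. Let $P_e$ denote the heat-bath kernel that resamples edge $e$: it is self-adjoint in $L^2(\pi)$, satisfies $P_e^2=P_e$, and, by monotonicity of the spin system, sends increasing functions to increasing functions. From these three properties I would deduce the following one-step lemma: whenever $h:=\nu/\pi$ is increasing, (a) $\nu P_e/\pi=P_e h$ is increasing, (b) $\nu P_e\preceq \nu$, and (c) $\|\nu P_e-\pi\|_\tv\le \|\nu-\pi\|_\tv$. Part (a) is immediate from $\pi$-self-adjointness together with the stability of increasing functions under $P_e$, and (c) is the standard Markov contractivity $\|\mu P_e-\pi P_e\|_\tv\le\|\mu-\pi\|_\tv$ applied at $\mu=\nu$. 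For (b), the key computation is that for any increasing $f$,
\[
\nu(f)-\nu P_e(f)=\pi\bigl((h-P_e h)(f-P_e f)\bigr)=\E_\pi\!\left[\cov_\pi(h,f\mid \cF_e)\right]\ge 0,
\]
using $\pi$-self-adjointness and $P_e^2=P_e$; here $\cF_e$ is the $\sigma$-algebra of all edges other than $e$, and conditionally on $\cF_e$ only the Bernoulli value $\omega(e)$ is random, so two monotone functions of a single Bernoulli variable are non-negatively correlated.

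Next I would condition on the Poisson clocks on every edge in $[0,T]$, producing a deterministic ordered list of ring times $0<s_1<\cdots<s_N\le T$ together with edges $e_1,\ldots,e_N$. Under this conditioning, the uncensored law at time $T$ equals $\mu_0 P_{e_1}\cdots P_{e_N}$, while the censored law is obtained from it by deleting those factors $P_{e_j}$ for which $e_j\notin B_{\ell(s_j)}$, where $\ell(s)$ denotes the censoring interval containing $s$. I would then induct on the number of deleted factors, reinserting them one at a time at their chronological positions. By (a), the invariant ``$\cdot/\pi$ is increasing'' is preserved by every single-site update, so at every intermediate stage of the induction the current measure still has increasing density; applying (b)--(c) at the moment of each insertion shows that the post-insertion measure is stochastically dominated by the pre-insertion one and at least as close to $\pi$ in total variation; and the single-site kernels standing to the right of the insertion propagate both inequalities by further applications of (b)--(c). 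Averaging the resulting conditional inequalities over the realization of the clocks, which are shared between the two chains by the grand coupling, delivers $\mu_T\preceq\bar\mu_T$ and $\|\mu_T-\pi\|_\tv\le\|\bar\mu_T-\pi\|_\tv$ simultaneously, while the invariant produced at every stage gives that $\mu_T/\pi$ and $\bar\mu_T/\pi$ are both increasing.

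The delicate step is the insertion argument: inserting $P_f$ \emph{in the middle} of a chain of further single-site updates must still move the law downwards in $\preceq$ and closer to $\pi$ in total variation. This is precisely where the invariant ``$\cdot/\pi$ increasing'' is indispensable: by (a) it is stable under every subsequent update, so (b)--(c) remain applicable no matter where in the product the insertion is placed, and hence the induction goes through independently of the order of the insertions.
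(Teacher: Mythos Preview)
The paper does not supply its own proof of this statement: Theorem~\ref{thm:censoring} is quoted from Peres--Winkler~\cite{PW13} (with the continuous-time formulation attributed to~\cite[Theorem~2.5]{MaTo10}), so there is no in-paper argument to compare against. Your plan is in fact the Peres--Winkler strategy: reduce to a single-site lemma and iterate along the Poisson skeleton.

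Your one-step lemma is correct, and the propagation of stochastic domination and of the ``increasing density'' invariant through the tail product $B=P_{e_{j+1}}\cdots P_{e_N}$ is fine (monotone kernels preserve $\preceq$, and (a) preserves the invariant). The gap is in how you derive the total-variation inequality. You write that ``(b)--(c) remain applicable'' after the insertion and that the kernels to the right ``propagate both inequalities by further applications of (b)--(c)''. But (c) is a one-measure contraction: from $\|\nu P_f-\pi\|_\tv\le\|\nu-\pi\|_\tv$ you cannot conclude $\|\nu P_f B-\pi\|_\tv\le\|\nu B-\pi\|_\tv$, because contractivity of $B$ shrinks each side separately without comparing them. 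What actually closes the argument is a different observation: once you know $\mu_T\preceq\bar\mu_T$ and that $h:=\mu_T/\pi$ is increasing, the up-set $U=\{h\ge 1\}$ realizes $\|\mu_T-\pi\|_\tv=\mu_T(U)-\pi(U)$, and then $\mu_T(U)\le\bar\mu_T(U)$ by domination, giving $\|\mu_T-\pi\|_\tv\le\bar\mu_T(U)-\pi(U)\le\|\bar\mu_T-\pi\|_\tv$. So the total-variation comparison follows from (a) and (b) together via this up-set trick, not from iterating (c); your item (c) is in fact unnecessary. With this correction the plan is complete and matches the original proof.
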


\subsubsection*{Boundary modifications}
Let $\xi,\xi'$ be a pair of boundary conditions on $\Lambda$ with corresponding mixing times $\tmix,\tmix'$; define \[M_{\xi,\xi'}=\| {\pi_\Lambda^\xi} / {\pi_\Lambda^{\xi'}}\|_\infty \;\vee\; \| {\pi_\Lambda^{\xi'}}/{\pi_\Lambda^{\xi}}\|_\infty\,.\]
It is well-known (see, e.g.,~\cite[Lemma~2.8]{MaTo10}) that for some $c$ independent of $n,\xi,\xi'$,
\begin{align}\label{eq:tmix-M}
\tmix\leq cM_{\xi,\xi'}^3 |E(\Lambda)|\tmix'
\end{align}
(this follows from first bounding $\tmix$ via its spectral gap, then using the variational characterization of the spectral gap: the Dirichlet form, expressed in terms of local variances, gives a factor of $M_{\xi,\xi'}^2$, and the variance produces another factor of $M_{\xi,\xi'}$).

\section{Equilibrium estimates}\label{sec:equilibrium-estimates}

In what follows, fix $q\in(1,4]$, let $p=p_c(q)$ and drop $p,q$ from the notation henceforth.

\subsection{Crossing probabilities}
In this subsection we present estimates on crossing probabilities that will be used to prove the desired mixing time bounds. The following is a slight extension of {\cite[Theorem 3.4]{GL16}}.

\begin{proposition}\label{prop:point-to-point-crossing} Let $q\in (1,4]$ and fix $\alpha \in (0,1]$. Consider the critical FK model on $\Lambda=\Lambda_{n,n'}$ with $\lfloor \alpha n\rfloor \leq n' \leq \lceil \alpha^{-1}n \rceil$. For every $\epsilon>0$, there exists $c_\star(\alpha,\epsilon, q)>0$ such that for every $x\in \llb \epsilon n,1-\epsilon n\rrb$, and every boundary condition $\xi$ on $\partial \Lambda$, one has
\[\pi^\xi_\Lambda \big(( x,0)\longleftrightarrow ( x,\lfloor n'\rfloor)\big)\gtrsim n^{-c_\star}\,.
\]
\end{proposition}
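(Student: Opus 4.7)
The event $\{(x,0)\longleftrightarrow(x,\lfloor n'\rfloor)\}$ is increasing in $\omega$, so monotonicity in boundary conditions gives $\pi_\Lambda^\xi(\,\cdot\,)\geq \pi_\Lambda^0(\,\cdot\,)$ and reduces the task to proving the lower bound under free boundary conditions $\xi=0$. From there, I would adapt the dyadic RSW+FKG construction of \cite[Theorem~3.4]{GL16} to the present setting of rectangles with general aspect ratio $n'/n\in[\alpha,\alpha^{-1}]$.

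Set $r=\lfloor \epsilon\alpha n/16\rfloor$, so that $r\asymp n\asymp n'$. The plan is to exhibit five increasing events whose joint occurrence, forced purely by planar topology, places $(x,0)$ and $(x,\lfloor n'\rfloor)$ in a common open cluster, so that a single application of FKG reduces matters to bounding each one in isolation. The five events are: (a) a boundary one-arm from $(x,0)$ to the top side of the half-box $H_B=\llb x-r,x+r\rrb\times\llb 0,r\rrb$; (b) the symmetric one-arm from $(x,\lfloor n'\rfloor)$ into the analogous half-box $H_T$ at the top; (c) a vertical open crossing of the middle strip $M=\llb x-r,x+r\rrb\times\llb r,\lfloor n'\rfloor-r\rrb$; and (d), (e) horizontal open crossings of two thin gluing slabs placed just above $H_B$ and just below $H_T$. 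Planarity then forces the one-arm cluster from $(x,0)$, which enters the lower slab from below, to cross and hence merge with the horizontal path in (d); that horizontal path similarly merges with the base of the vertical crossing in (c); the symmetric picture holds at the top.

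Events (c)--(e) are crossings of rectangles with aspect ratio uniformly bounded in $(\alpha,\epsilon)$, so Theorem~\ref{thm:RSW} and FKG give a constant lower bound $p_\star=p_\star(\alpha,\epsilon,q)>0$ on each. For the boundary one-arm events (a), (b), I would run the classical iterated RSW scheme: at each of the $O(\log n)$ dyadic scales $2^k\leq r$, a bounded combination of rectangle crossings provided by Theorem~\ref{thm:RSW} yields, with constant probability, an open ``semi-circuit'' in the half-annulus $H_{2^{k+1}}\setminus H_{2^k}$ which by planarity absorbs any cluster reaching the inner boundary; iterating via FKG gives $\pi_\Lambda^0(\text{(a)}), \pi_\Lambda^0(\text{(b)})\gtrsim n^{-c_1}$ for some $c_1=c_1(\alpha,\epsilon,q)>0$. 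A final FKG combination of (a)--(e) produces the desired $n^{-c_\star}$ bound. The step that will require the most care is verifying that each gluing is enforced topologically (so that all five events remain increasing and FKG applies directly as a single step); the dependence on the aspect ratio enters only through the RSW constants, which are uniform for $n'/n\in[\alpha,\alpha^{-1}]$.
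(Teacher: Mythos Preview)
Your proposal is correct and follows essentially the same approach as the paper: both reduce to the construction of \cite[Theorem~3.4]{GL16}, which stitches together rectangle crossings via RSW and FKG, and then observe that the RSW constants of Theorem~\ref{thm:RSW} are uniform over aspect ratios in $[\alpha,\alpha^{-1}]$. The paper's proof simply cites that result and notes the extension, whereas you spell out the construction in more detail; the content is the same.
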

\begin{proof}
The proposition was proved in the case $n'=\lfloor \alpha n\rfloor$ in {\cite[Theorem 3.4]{GL16}} by stitching together crossings of rectangles and using the RSW estimates of Theorem~\ref{thm:RSW}. Since the crossing probabilities of Theorem~\ref{thm:RSW} are monotone in the aspect ratio, each is bounded away from zero for aspect ratios in $[\alpha,\alpha^{-1}]$, yielding the desired extension.
\end{proof}

The next two results are for  $q=4$ (Proposition~\ref{prop:true-RSW} implies both  for $1<q<4$).

\begin{lemma}\label{lem:wired/free} Let $q=4$ and fix  $\alpha\in(0,1]$. Consider the critical FK model on $\Lambda=\Lambda_{n,n'}$ with $\lfloor \alpha n \rfloor \leq n'\leq \lceil \alpha^{-1} n\rceil$ and $(1,0)$ boundary conditions denoting wired on $\partial_\south \Lambda$ and free elsewhere. For every $\epsilon>0$, there exists $p(\alpha,\epsilon)>0$ such that
\[\pi^{1,0}_{\Lambda} \left (\cC_v(\llb 0,n\rrb \times \llb 0,(1-\epsilon) n'\rrb)\right)\geq p(\epsilon).
\]
\end{lemma}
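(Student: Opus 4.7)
The plan is to construct the required vertical crossing in two pieces and glue them together via FKG, using monotonicity to reduce the control of one piece to Theorem~\ref{thm:RSW} and exploiting the wired south to produce the second piece (which cannot come from Theorem~\ref{thm:RSW} alone since Proposition~\ref{prop:true-RSW} fails at $q=4$).

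First, I would fix a small parameter $\delta=\delta(\epsilon,\alpha)\in(0,\epsilon/3)$ and set $R_\delta=\llb\delta n,(1-\delta)n\rrb\times\llb\delta n',(1-\delta)n'\rrb$, chosen so that the top of $R_\delta$ is at height $(1-\delta)n'>(1-\epsilon)n'$. Theorem~\ref{thm:RSW} gives $\pi^0_\Lambda(\cC_v(R_\delta))\geq p_1(\alpha,\delta,q)>0$, and since $\cC_v(R_\delta)$ is increasing and $\pi^{1,0}_\Lambda\succeq\pi^0_\Lambda$, the same lower bound holds under $\pi^{1,0}_\Lambda$. Any open path witnessing $\cC_v(R_\delta)$ terminates at the south at some vertex of the segment $I:=\llb\delta n,(1-\delta)n\rrb\times\{\delta n'\}$.

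Next, I would bridge $I$ down to $\partial_\south\Lambda$. Let $E$ denote the event that there is an open path inside the strip $S=\llb 0,n\rrb\times\llb 0,\delta n'\rrb$ connecting $\partial_\south\Lambda$ to $I$. To lower bound $\pi^{1,0}_\Lambda(E)$, I would work on the extended rectangle $\widehat\Lambda=\llb 0,n\rrb\times\llb -n',n'\rrb$ (obtained by reflecting $\Lambda$ across its south side) under free boundary conditions. Theorem~\ref{thm:RSW} applied to $\widehat\Lambda$ with a parameter $\delta''\in(0,\delta)$ yields a vertical crossing of $R_{\delta''}(\widehat\Lambda)=\llb \delta'' n,(1-\delta'')n\rrb\times\llb (2\delta''-1)n',(1-2\delta'')n'\rrb$ with probability at least $p_2>0$. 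By continuity, such a crossing intersects the line $y=0$ at some $(x_0,0)$ with $x_0\in\llb \delta n,(1-\delta)n\rrb$, and its upper half realizes $E$ inside $\Lambda$. To transfer the bound to $\pi^{1,0}_\Lambda$, I would use the Domain Markov property: the marginal $\pi^0_{\widehat\Lambda}\!\restriction_{\Lambda}$ is a mixture of $\pi^\xi_\Lambda$ over random boundary conditions $\xi$ on $\partial\Lambda$ whose south partition is always coarser than free and whose N, W, E partitions are free up to connections through the lower half; after a careful comparison (and possibly restricting to the positive-probability event that no such spurious N/W/E connection links the three free sides of $\Lambda$, which is ensured by an appropriate dual crossing in the lower half via Corollary~\ref{cor:annulus-circuit}), one obtains $\pi^{1,0}_\Lambda(E)\geq p_2'>0$.

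Finally, I would combine the two increasing events $\cC_v(R_\delta)$ and $E$ via FKG under $\pi^{1,0}_\Lambda$, obtaining
\[
\pi^{1,0}_\Lambda\big(\cC_v(R_\delta)\cap E\big)\;\geq\; p_1\,p_2'\;>\;0\,.
\]
On this intersection, the crossing of $R_\delta$ terminates in $I$, which by $E$ is connected through $S$ to $\partial_\south\Lambda$; concatenating these open paths and truncating at height $(1-\epsilon)n'$ yields the desired vertical crossing of $\llb 0,n\rrb\times\llb 0,(1-\epsilon)n'\rrb$, with probability at least some $p(\alpha,\epsilon)>0$ depending only on $\alpha$ and $\epsilon$. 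The main obstacle is the second step: lacking a uniform RSW bound at $q=4$, the wired south is the only mechanism producing a connection to the very bottom, and extracting a quantitative bound from it requires the reflection/extension argument and a nontrivial comparison between $\pi^{1,0}_\Lambda$ and the free-boundary measure on $\widehat\Lambda$ that handles the boundary conditions induced on $\partial\Lambda$ by the lower half.
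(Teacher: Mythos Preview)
Your final gluing step does not work as written: on $\cC_v(R_\delta)\cap E$, the vertical crossing of $R_\delta$ lands at some point of $I$, while the path witnessing $E$ reaches some \emph{other} point of $I$, and nothing forces these two points to coincide or to be connected. The standard repair is to throw in a third increasing event --- a horizontal crossing of a thin strip around height $\delta n'$, again from Theorem~\ref{thm:RSW} under free (hence under $(1,0)$) boundary --- and combine all three via FKG. In fact the two-piece decomposition is superfluous: the vertical crossing of $R_{\delta''}(\widehat\Lambda)$, truncated at its last visit to $y=0$, already gives an open path in $\Lambda$ from $\partial_\south\Lambda$ up to height $(1-2\delta'')n'\geq(1-\epsilon)n'$ once $\delta''\leq\epsilon/2$, so the reflection step alone proves the lemma and Step~1 is not needed. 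Your ``careful comparison'' is also over-cautious: the north, east and west sides of $\Lambda$ lie on $\partial\widehat\Lambda$, so every boundary condition on $\Lambda$ induced by a lower-half configuration is free on those three sides and hence $\leq(1,0)$ deterministically; monotonicity immediately gives $\pi^{1,0}_\Lambda\geq\pi^0_{\widehat\Lambda}$ on increasing events, with no dual-crossing caveat required. (Minor: you need $\delta''\geq\delta$, not $\delta''<\delta$, for the crossing to stay inside the horizontal span of $I$.)

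Once repaired, your reflection argument is a genuinely different route from the paper's. The paper does not reflect; it instead invokes self-duality directly: on an $n\times n$ square with wired $\north,\south$ and free $\east,\west$ the vertical-crossing probability is exactly $1/2$, hence at least some $p_0>0$ after a Radon--Nikodym correction when the two wired sides are disconnected. It then uses Theorem~\ref{thm:RSW} to place a horizontal crossing high in $\Lambda$, conditions on it so that the region below stochastically dominates a wired-$\north,\south$/free-$\east,\west$ square, applies the self-duality bound $p_0$ there, and finally stitches with an annulus circuit (Corollary~\ref{cor:annulus-circuit}) to cover the full range $\lfloor\alpha n\rfloor\leq n'\leq\lceil\alpha^{-1}n\rceil$. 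Your reflection device is arguably cleaner --- it packages the role of the wired south side in a single monotonicity step and avoids both the explicit self-duality computation and the annulus-circuit extension --- but the gluing has to be done correctly (or bypassed entirely as above).
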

\begin{proof}Note that for an $n\times n$ square with wired boundary conditions on the $\north,\south$ sides, and free boundary conditions elsewhere, the probability of a vertical crossing is, by self-duality, $1/2$. By bounding the Radon--Nikodym derivative, it is easy to see that under the same boundary conditions but with the north and south sides disconnected from each other, the same probability is bounded below by some $p_0(q)>0$.

Moreover, by Theorem~\ref{thm:RSW} and monotonicity in boundary conditions, there exists $p_1(\epsilon)>0$ such that
\[\pi^{1,0}_{\Lambda}\left(C_h\left(\llb \tfrac\epsilon4 n,(1-\tfrac\epsilon4) n\rrb \times \llb (1-\epsilon) \alpha n,(1-\tfrac\epsilon2) \alpha n \rrb\right)\right)\geq p_1\,.
\]
The measure on $\llb(\epsilon/4)n,(1-\epsilon/4)n\rrb \times \llb 0,(1-\epsilon)n\rrb$ conditioned on the above crossing event stochastically dominates the measure induced on it by wired on the $\north,\south$ sides and free on the $\east,\west$ sides of $\llb (1-\alpha+\tfrac {\epsilon \alpha}2)\tfrac n2,(1+\alpha-\tfrac{\epsilon \alpha}2)\tfrac n2\rrb \times \llb 0, (1-\tfrac \epsilon 2)\alpha n\rrb$.
By monotonicity in boundary conditions inequality the probability of a vertical crossing in $\llb 0,n\rrb \times \llb 0,(1-\epsilon)\alpha n\rrb$ is thus bigger than $p_0 p_1$. Finally, by Corollary~\ref{cor:annulus-circuit} and monotonicity of crossing probabilities in aspect ratio, there exists $p_2(\alpha,q)>0$ such that
\[\pi_{\Lambda}^{1,0} \left(\cC_0(\Lambda-\llb (1-\alpha)\tfrac n2,(1+\alpha)\tfrac n2\rrb \times \llb (1-\epsilon) \alpha n,(1-\epsilon)n'\rrb)\right)\geq p_2\,
\]
holds for every $\lfloor \alpha n\rfloor\leq n'\leq \lceil \alpha^{-1}n\rceil$. By the FKG inequality, stitching the three crossings together implies the desired lower bound for $p=p_0p_1p_2$.
\end{proof}

\begin{corollary}\label{cor:wired-free-wired-free}
Let $q=4$ and fix  $\alpha\in (0,1]$. Consider the critical FK model on $\Lambda=\Lambda_{n,n'}$ with $\lfloor \alpha n\rfloor \leq n'\leq \lceil \alpha^{-1}n\rceil$ and boundary conditions, denoted by $(1,0,1,0)$, that are wired on $\partial_{\north,\south} \Lambda$ and free on $\partial_{\east,\west} \Lambda$. Then there exists $p(\alpha)>0$ such that
\[\pi_{\Lambda}^{1,0,1,0}(\cC_v (\Lambda))\geq p\,.
\]
\end{corollary}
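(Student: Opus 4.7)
The plan is to bootstrap Lemma~\ref{lem:wired/free} by invoking it twice---once from the wired south side and once, by the reflectional symmetry of the $(1,0,1,0)$ boundary conditions, from the wired north side---and then to link the two resulting partial vertical crossings via a full-width horizontal crossing of a middle strip, combining everything by FKG and planarity.

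Fix $\epsilon\in(0,1/4)$. Since the $(1,0,1,0)$ boundary conditions dominate both those having $\partial_\south\Lambda$ wired (with all other sides free) and those having $\partial_\north\Lambda$ wired (with all other sides free), monotonicity in boundary conditions together with Lemma~\ref{lem:wired/free}---applied in the standard orientation and then to the vertically reflected rectangle---yields
\[
\pi_\Lambda^{1,0,1,0}(\cA)\,\geq\, p_1(\epsilon),\qquad \pi_\Lambda^{1,0,1,0}(\cB)\,\geq\, p_1(\epsilon),
\]
for the increasing events
\[
\cA \,:=\, \cC_v\bigl(\llb 0,n\rrb\times\llb 0,(1-\epsilon)n'\rrb\bigr), \qquad \cB \,:=\, \cC_v\bigl(\llb 0,n\rrb\times\llb \epsilon n', n'\rrb\bigr).
\]

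Let $M:=\llb 0,n\rrb\times\llb \epsilon n',(1-\epsilon)n'\rrb$ be the middle horizontal strip (of aspect ratio bounded in $\alpha$), and let $\cH:=\cC_h(M)$ be the event of a full-width horizontal crossing of $M$. A planarity argument within the sub-rectangle $\llb 0,n\rrb\times\llb 0,(1-\epsilon)n'\rrb$ forces any vertical crossing guaranteed by $\cA$ to share a vertex with any full-width horizontal crossing in $\cH$, and similarly $\cB$ meets $\cH$ within $\llb 0,n\rrb\times\llb \epsilon n', n'\rrb$; concatenating the three paths produces an open connection from $\partial_\south\Lambda$ to $\partial_\north\Lambda$, i.e.\ $\cC_v(\Lambda)$. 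Thus $\cA\cap\cB\cap\cH\subseteq\cC_v(\Lambda)$, and it remains to show $\pi_\Lambda^{1,0,1,0}(\cH)\geq p_2(\alpha)$. By the domain Markov property and monotonicity, $\pi_\Lambda^{1,0,1,0}\restriction_M\succeq \pi_M^0$, so it suffices to lower bound $\pi_M^0(\cC_h(M))$. For $1<q<4$ this is exactly Proposition~\ref{prop:true-RSW}; for $q=4$---the case at hand---Proposition~\ref{prop:true-RSW} fails, and I would combine the interior horizontal crossing provided by Theorem~\ref{thm:RSW} with the open circuit in an annulus provided by Corollary~\ref{cor:annulus-circuit}, then extend to the full width by one further application of Lemma~\ref{lem:wired/free} to thin side-slabs adjacent to $\partial_\east M$ and $\partial_\west M$ whose inner sides are effectively wired by the annular circuit.

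FKG on the three increasing events then gives
\[
\pi_\Lambda^{1,0,1,0}(\cC_v(\Lambda)) \,\geq\, \pi_\Lambda^{1,0,1,0}(\cA\cap\cB\cap\cH) \,\geq\, p_1(\epsilon)^2\,p_2(\alpha) \,>\, 0,
\]
which is the uniform lower bound $p(\alpha)$ claimed in the corollary. The principal technical difficulty I anticipate is the $q=4$ construction of $\cH$: in the absence of a uniform full-width RSW estimate at $q=4$, extending the interior horizontal crossing of $M$ to its full width via the annular circuit and Lemma~\ref{lem:wired/free} on side-slabs requires careful planarity bookkeeping, which is where the bulk of the work will lie.
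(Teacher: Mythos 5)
The inclusion $\cA\cap\cB\cap\cH\subseteq\cC_v(\Lambda)$ and the Lemma~\ref{lem:wired/free}-based bounds on $\cA,\cB$ are sound, but the argument has a genuine gap at the step you yourself flag: lower bounding $\pi_\Lambda^{1,0,1,0}(\cH)$ for $q=4$. The middle strip $M$ inherits \emph{free} boundary conditions on $\partial_\east M$ and $\partial_\west M$ from the $(1,0,1,0)$ conditions on $\Lambda$, and the wired top/bottom of $\Lambda$ are at distance $\gtrsim n'$ from $M$, so $\cC_h(M)$ is precisely a full-width crossing towards free boundary --- the very quantity the paper notes is believed to vanish as $n\to\infty$ at $q=4$ (see the remark after Proposition~\ref{prop:true-RSW}). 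Reducing to $\pi_M^0(\cC_h(M))$ by monotonicity therefore gives a bound tending to $0$, and your proposed repair does not recover it: Lemma~\ref{lem:wired/free} only ever yields a crossing spanning a $(1-\epsilon)$-fraction of the distance away from the wired side, so applying it (rotated) on side-slabs near $\partial_\east M,\partial_\west M$, even after conditioning the inner slab boundary to be ``effectively wired'' by an annular circuit, still cannot reach the free sides $\partial_\east M,\partial_\west M$ themselves. None of the tools in the paper (Theorem~\ref{thm:RSW}, Corollary~\ref{cor:annulus-circuit}, Lemma~\ref{lem:wired/free}) produces a crossing that touches a macroscopic free boundary segment at $q=4$, and this is exactly the missing piece.

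The paper's proof sidesteps this entirely: it never attempts a crossing that reaches $\partial_\east\Lambda$ or $\partial_\west\Lambda$. For $n'\le n$ it just invokes self-duality plus monotonicity directly. For $n\le n'\le\lceil\alpha^{-1}n\rceil$ it uses two \emph{short} vertical crossings (height $\epsilon n$) hanging off the wired $\partial_\north\Lambda,\partial_\south\Lambda$ via Lemma~\ref{lem:wired/free}, together with an \emph{open circuit} in the annulus $A_\epsilon=\Lambda-\llb\epsilon n,(1-\epsilon)n\rrb\times\llb\epsilon n,n'-\epsilon n\rrb$, which Corollary~\ref{cor:annulus-circuit} and Theorem~\ref{thm:RSW} bound uniformly because the circuit is an interior event needing no contact with the free sides. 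That replacement of your full-width $\cH$ by an interior annular circuit is the key idea you are missing; the rest of your argument (planarity gluing and FKG) would then go through essentially as written.
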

\begin{proof}
For all $n'\leq n$ this follows immediately from self-duality and monotonicity in boundary conditions. For $n\leq n'\leq \lceil \alpha^{-1} n\rceil$, by monotonicity in boundary conditions and Lemma~\ref{lem:wired/free}, for any $\epsilon\in (0,1)$, there is a $p(1,\epsilon)>0$ such that,
\[\pi_{\Lambda}^{1,0,1,0}\left(\cC_v(\llb 0,n \rrb \times \llb 0,\epsilon n\rrb)\right)\geq p\,,
\]
and by reflection symmetry, $\pi_{\Lambda}^{1,0,1,0}\left(\cC_v(\llb 0,n \rrb \times \llb n'-\epsilon n, n' \rrb )\right)\geq p$.
Let
\[
A_\epsilon =\Lambda-\llb \epsilon n, (1-\epsilon)n \rrb \times \llb \epsilon n, n'-\epsilon n\rrb\,.
\]
Since $\cC_o(A_\epsilon)$ can be lower bounded by four crossings of rectangles, each of whose probabilities is monotone in the aspect ratio and thus bounded away from $0$ uniformly over $n\leq n'\leq \lceil \alpha^{-1}n\rceil$, we have that $\pi_{\Lambda}^{1,0,1,0}(\cC_v(A_\epsilon))\geq p'$ uniformly over $n\leq n'\leq \lceil \alpha^{-1} n\rceil$ for some $p'(\alpha,\epsilon)$.
Now observe that
\[\bigg(\cC_v(\llb 0,n \rrb \times \llb 0,\epsilon n\rrb)\cap \cC_v(\llb 0,n \rrb \times \llb n'-\epsilon n, n' \rrb) \cap \cC_o(A_\epsilon)\bigg)\subset \cC_v(\Lambda)\,.
\]
After fixing any small $\epsilon>0$, by the FKG inequality, there exists some $p(\alpha)>0$ such that for every $n\leq n'\leq \lceil \alpha^{-1}n\rceil$, one has $\pi_\Lambda^{1,0,1,0}(\cC_v(\Lambda))\geq p$, as required.
\end{proof}

\begin{figure}
  \hspace{-0.15in}
  \begin{tikzpicture}

    \newcommand{\xfigshift}{0pt}
    \newcommand{\yfigshift}{0pt}

      \node (plot) at (0,0){\includegraphics[width=0.8\textwidth]{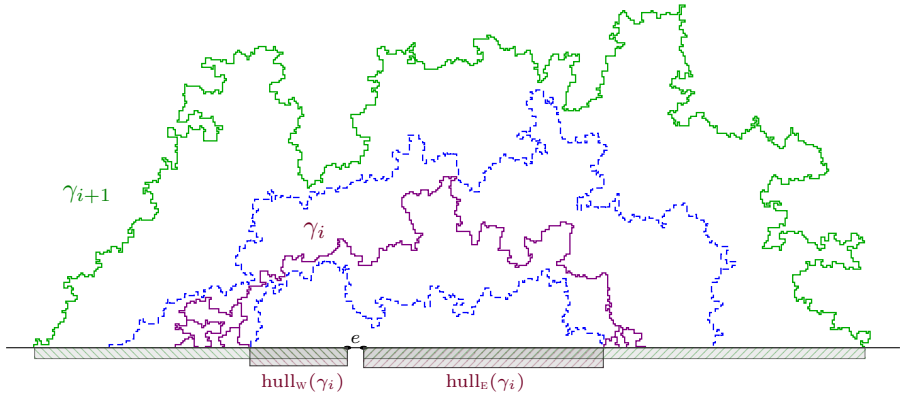}};

    \begin{scope}[shift={($(plot.south west)+(\xfigshift,\yfigshift)$)}, x={($(plot.south east)+(\xfigshift,\yfigshift)$)},y={($(plot.north west)+(\xfigshift,\yfigshift)$)}, font=\small,scale=.91]
     \draw[color=black] (0,1.05) -- (1.1,1.05);
     \draw[color=black] (0,0.075) -- (1.1,0.075);

      \node[color=DarkPurple, font=\tiny] at (.59,-.02) {$\mbox{hull}_\east(\gamma_{i})$};
      \node[color=DarkPurple, font=\tiny] at (.365,-.02) {$\mbox{hull}_\west(\gamma_{i})$};
      \node[color=DarkPurple] at (.38,.4) {$\gamma_{i}$};
      \node[color=DarkGreen] at (.1,.5) {$\gamma_{i+1}$};

     \draw[color=black,fill=black]  (.44,.075) circle (0.004) node[above right] {};
     \draw[color=black,fill=black]  (.42,.075) circle (0.004) node[above right] {};
      \node[font=\tiny] at (0.43,.1) {$e$};

     \filldraw[draw=gray, thin,opacity=0.755,fill=gray, fill opacity=0.2] (.44,.075) rectangle (.735,0.02);
      \draw[pattern=north east lines, pattern color=DarkPurple, opacity=0.5] (0.44,.075) rectangle (.735,0.02);

           \filldraw[draw=gray, thin,opacity=0.755,fill=gray, fill opacity=0.2] (.42,.075) rectangle (.3,0.025);
      \draw[pattern=north west lines, pattern color=DarkPurple, opacity=0.5] (0.42,.075) rectangle (.3,0.025);

     \filldraw[draw=gray, thin,opacity=0.5,fill=gray, fill opacity=0.15] (.44,.075) rectangle (1.057,0.045);
      \draw[pattern=north east lines, pattern color=DarkGreen, opacity=0.3] (0.44,.075) rectangle (1.057,0.045);

      \filldraw[draw=gray, thin,opacity=0.5,fill=gray, fill opacity=0.15] (.42,.075) rectangle (.035,0.045);
      \draw[pattern=north west lines, pattern color=DarkGreen, opacity=0.3] (0.42,.075) rectangle (.035,0.045);

    \end{scope}
  \end{tikzpicture}
  \caption{A pair of boundary bridges, $\gamma_i,\gamma_{i+1}$, over $e\in \partial_\north R$ induced by a configuration on $\Lambda-R$, and separated by a dual-bridge over $e$.}
  \label{fig:bridges-hull}
\end{figure}

\subsection{Boundary bridges} \label{sub:bdy-bridges}

In this subsection we define boundary bridges of the FK model and related notation. As explained in detail in~\S\ref{sub:main-techniques}, the presence of boundary bridges will be the key obstacle to coupling and, in turn, to mixing time bounds.

\begin{definition}\label{def:bridges}
Consider a rectangle $\Lambda=\Lambda_{n,n'}$ with boundary conditions $\xi$, and a connected segment $L=\llb a,b\rrb\times \{n'\} \subset \partial_\north \Lambda$. A component $\gamma\subset \partial_\north \Lambda$ of $\xi$ is a \emph{bridge over $L$} if there exist $v=(v_1,v_2),w=(w_1,w_2)\in\gamma$ such that $v\stackrel{\xi}\longleftrightarrow w$ and
\[v_1< a \qquad \mbox{and} \qquad w_1 >b\,.
\]
Note that every two distinct bridges $\gamma_1\neq\gamma_2$ over $L$ are disjoint in $\xi$. 
Denote by $\Gamma^L=\Gamma^L(\xi)$ the set of all bridges over the segment $L$.
Define bridges on subsets of $\partial \Lambda_{\south},\partial \Lambda_\east,\partial \Lambda_\west$ analogously.
\end{definition}

\begin{definition}[hull and length of a bridge]
 The \emph{west and east hulls} of
 a bridge $\gamma$ over $L=\llb a,b\rrb\times \{n'\}$ are defined as
\begin{align*}
\mbox{hull}_\west(\gamma)&=\llb \max \{x\leq a:{(x,n')\in   \gamma}\},a \rrb \times \{n'\}\,,\\
\mbox{hull}_\east(\gamma)&=\llb b,\min \{x\geq b:{(x,n')\in  \gamma}\} \rrb \times \{n'\}\,,
\end{align*}
so that the hulls of a bridge $\gamma$ are connected subsets of $\partial_\north \Lambda$ (see Fig.~\ref{fig:bridges-hull}. The \emph{west and east lengths} of $\gamma$ are defined to be
\[\ell_\west(\gamma)=|\hull_\west(\gamma)|\,, \qquad \ell_\east(\gamma)=|\hull_\east(\gamma)|\,.
\]
\end{definition}

Given the above convention, for any $L$ and $\xi$ we can define a \emph{east-ordering} of $\Gamma^L(\xi)$ as $(\gamma_1,\gamma_2,...,\gamma_{|\Gamma^L|})$ where, for all $i<j$,
\[ \ell_\east(\gamma_i) < \ell_\east(\gamma_j)\,.\]
Note that, in this ordering of the bridges, $\hull_\east(\gamma_i) \subsetneq \hull_\east(\gamma_j)$ for all $i<j$.
Define a \emph{west-ordering} of $\Gamma^L$ analogously.

\begin{definition} \label{def:planar-bridges}

For a subset $ R\subset \Lambda$, an \emph{induced boundary condition on $\partial R$} is one that can be identified with the component structure of an edge configuration $\omega\restriction_{\Lambda-R^o}$ along with the boundary condition on $\Lambda$. 

\end{definition}

Using the above definitions, and planarity, one can check the following useful facts (depicted in Fig.~\ref{fig:bridges-hull}). For concreteness we use the east-ordering of $\Gamma^L=\{\gamma_1,...,\gamma_{|\Gamma^L|}\}$.

\begin{fact}\label{fact:disjoint-primal}
Let  $\Lambda\supset R$ with boundary conditions $\xi$, and let $L\subset \partial_\north R$.
If $\gamma_i$, for $i < | \Gamma^L|$, is the $i$-th bridge in the east-ordering of $\Gamma^L$, then either the two connected components of $\partial_\north R-( \hull_\west(\gamma_i) \cup L \cup \hull_\east(\gamma_i) )$ are connected in $\Lambda-R$, or each of these components is connected to $\partial\Lambda $ in $\Lambda-R$.
\end{fact}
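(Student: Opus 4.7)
The plan is to use the existence of the ``next'' bridge $\gamma_{i+1}$ in the right-ordering to exhibit the required connection in $\Lambda - R$. By the right-ordering, $\hull_r(\gamma_{i+1}) \supsetneq \hull_r(\gamma_i)$, so $\gamma_{i+1}$ contains a point $(r_{i+1}, n') \in \partial_\north R$ with $r_{i+1} > r_i$, which lies in the right component of $\partial_\north R - (\hull_l(\gamma_i) \cup L \cup \hull_r(\gamma_i))$. The bridge property of $\gamma_{i+1}$ also guarantees a landing $(p, n')$ with $p < a$, though a priori this $p$ need not lie in the left component.

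Next, I would decompose $\gamma_{i+1}$ into its $\omega\restriction_{\Lambda - R^o}$-components, i.e.\ the maximal subsets of $\gamma_{i+1}$ joined by paths of open edges in $\Lambda - R^o$ alone, before gluing through $\xi$ on $\partial\Lambda$. Two scenarios arise: either $\gamma_{i+1}$ is a single such bulk-component and avoids $\partial\Lambda$, or every bulk-component of $\gamma_{i+1}$ that participates in a $\xi$-gluing must touch $\partial\Lambda$. In the first scenario, every pair of points of $\gamma_{i+1}$ on $\partial R$ is joined by an open path in $\Lambda - R$; by planarity of disjoint simple curves in the annular region $\Lambda - R^o$, together with the disjointness of $\gamma_i$ and $\gamma_{i+1}$ as $\xi_R$-components, the witness $p$ can be chosen with $p < l_i$, so that the two components of $\partial_\north R - (\hull_l(\gamma_i) \cup L \cup \hull_r(\gamma_i))$ become directly connected in $\Lambda - R$. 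In the second scenario $(r_{i+1}, n')$ is bulk-connected to $\partial\Lambda$, which yields the right component's connection, and an analogous planarity argument applied either to further bridges $\gamma_j \in \Gamma^L$ with $j > i$ or to $\gamma_i$ itself shows that the left component is likewise bulk-connected to $\partial\Lambda$.

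The hard part will be the planarity step, in particular the delicate subcase where every left landing of $\gamma_{i+1}$ lies inside the interior of $\hull_l(\gamma_i)$. The key observation resolving it is that in the annular region $\Lambda - R^o$, two disjoint simple curves with interleaved endpoints on $\partial_\north R$ cannot coexist without at least one of them passing through $\partial\Lambda$. Hence such a ``crossed'' configuration of two disjoint $\xi_R$-components over $L$ forces at least one of $\gamma_i$, $\gamma_{i+1}$ to be realized through $\partial\Lambda$, and the bulk-to-$\partial\Lambda$ argument then applies on both sides of $L$ to yield the desired dichotomy.
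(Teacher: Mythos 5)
The paper does not spell out a proof of this Fact (it is stated with ``Using the above definitions, and planarity, one can check\dots''), so I am comparing your attempt against the intended planarity argument. Your setup is right: use $\gamma_{i+1}$, note it lands at some $(r_{i+1},n')$ with $r_{i+1}>r_i$ in the right component, and decompose $\gamma_{i+1}$ into its $\omega\restriction_{\Lambda-R^o}$--bulk pieces. Your Scenario~1 (single bulk piece avoiding $\partial\Lambda$) is also fine: the enclosed region cut out by the path of $\gamma_{i+1}$ cannot touch $\partial\Lambda$, so $\gamma_i$ can neither cross it nor escape through $\xi$, forcing the leftmost landing $p$ of $\gamma_{i+1}$ to satisfy $p<l_i$.

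The gap is in your treatment of the ``delicate subcase.'' You claim that if the endpoints are interleaved, then one of $\gamma_i,\gamma_{i+1}$ must be realized through $\partial\Lambda$, and that then ``the bulk-to-$\partial\Lambda$ argument applies on both sides of $L$.'' This last step does not follow: even if the realizing cluster of $\gamma_{i+1}$ (or of $\gamma_i$) touches $\partial\Lambda$, its bulk pieces attach to $\partial_\north R$ only at $(p,n')$ and $(r_{i+1},n')$ (resp.\ $(l_i,n'),(r_i,n')$). If $l_i<p<a$, none of these lies in the left component $\llb 0,l_i-1\rrb$, so nothing in the argument exhibits an open $\omega$-path from the left component to $\partial\Lambda$. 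In other words, the dichotomy in the Fact is not about the bridge endpoints but about the two \emph{components of} $\partial_\north R$ flanking $\hull_l(\gamma_i)\cup L\cup\hull_r(\gamma_i)$, and reaching $\partial\Lambda$ from $(p,n')$ is simply not the same as reaching it from $\llb0,l_i-1\rrb$. What is needed---and what the paper's appeal to planarity is pointing at---is to show that the ``delicate subcase'' never occurs: view $\omega\restriction_{\Lambda-R}\cup\xi$ as a planar graph in the one-point compactification of $\mathbb R^2-R^o$ (a disk with boundary $\partial R$, with the $\xi$-connections drawn as non-crossing arcs outside $\Lambda$). Then the disjoint clusters realizing $\gamma_i$ and $\gamma_{i+1}$ cannot have interleaved attachment points $l_i<p<r_i<r_{i+1}$ on $\partial_\north R$, so in fact $p<l_i$ always. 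With $(p,n')$ thus in the left component, the dichotomy is immediate from your bulk decomposition: either $(p,n')$ and $(r_{i+1},n')$ lie in the same $\omega$-cluster of $\Lambda-R$ (components connected), or they lie in distinct bulk pieces of $\gamma_{i+1}$, each of which must meet $\partial\Lambda$ in order to participate in a $\xi$-gluing (each component connected to $\partial\Lambda$). Your proof establishes $p<l_i$ only in the narrow Scenario~1 and then patches the remaining cases with a claim that, as written, is false.
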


\begin{fact}\label{fact:disjoint-dual}
Let  $\Lambda\supset R$ with boundary conditions $\xi$, and let $L\subset \partial_\north R$.
For every two induced  bridges $\gamma_1 \neq \gamma_2$ over a segment $L$ such that $\hull_\east(\gamma_1)\subset \hull_\east(\gamma_2)$, either the two sets $(\hull_\west(\gamma_2)\xor \hull_\west(\gamma_1))$ and $(\hull_\east(\gamma_2)\xor \hull_\east(\gamma_1))$ are dual-connected in $\Lambda-R$, or each of these sets is dual-connected to $\partial\Lambda$ in $\Lambda-R$.
\end{fact}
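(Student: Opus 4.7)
The plan is to mirror the primal argument of Fact~\ref{fact:disjoint-primal} in the dual, using planarity and the fact that $\gamma_1\neq \gamma_2$ are distinct components of $(\omega\restriction_{\Lambda - R^o})\cup \xi$. First I would argue that the two bridges must be \emph{nested} in their hulls: embedding the configuration together with $\xi$ in the plane, if the leftmost endpoint of $\gamma_1$ on $\partial R$ were strictly to the left of the leftmost endpoint of $\gamma_2$, then a simple path inside $\gamma_1$ joining its leftmost and rightmost vertices on $\partial R$ would have to cross any simple path inside $\gamma_2$ joining the analogous vertices, forcing the two components to meet. Hence, given $\hull_r(\gamma_1)\subset \hull_r(\gamma_2)$, one must also have $\hull_l(\gamma_1)\subset \hull_l(\gamma_2)$, and in particular both symmetric differences in the statement equal the corresponding set differences.

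Next I would select simple paths $\pi_1\subset \gamma_1$ and $\pi_2\subset \gamma_2$, chosen as an ``outermost'' path of $\gamma_1$ and an ``innermost'' path of $\gamma_2$, so that no vertex of $\gamma_1\cup \gamma_2$ lies in the open region between them. Together with the two gap segments $I_l := \hull_l(\gamma_2)\xor \hull_l(\gamma_1)$ and $I_r := \hull_r(\gamma_2)\xor \hull_r(\gamma_1)$ on $\partial_\north R$, the paths $\pi_1, \pi_2$ bound a topological strip $S\subset \Lambda - R$ whose four sides in cyclic order are $\pi_1$, $I_l$, $\pi_2$, $I_r$.

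Inside $S$, no primal edge of $(\omega\restriction_{\Lambda - R^o})\cup \xi$ connects $\pi_1$ to $\pi_2$, for otherwise $\gamma_1$ and $\gamma_2$ would coincide. Applied to the topological rectangle $S$, the standard primal/dual crossing dichotomy then yields a dual open path in $S\subset \Lambda - R$ joining $I_l$ to $I_r$---the first alternative in the statement. The one caveat is when $S$ meets $\partial \Lambda$; treating $\partial \Lambda$ as an additional side of $S$ (equivalently, compactifying by identifying $\partial \Lambda$ with a single point at infinity), the same dichotomy produces a dual path that may exit through $\partial \Lambda$ rather than closing up across $S$, yielding the second alternative: each of $I_l,I_r$ is dual-connected to $\partial \Lambda$ in $\Lambda - R$. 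The main technical nuisance I anticipate is making the ``outermost/innermost'' path choice rigorous and treating the boundary case uniformly; once the strip $S$ is set up correctly, the rest is standard planar duality.
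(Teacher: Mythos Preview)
The paper does not actually prove this fact---it is stated as something ``one can check'' using planarity---so there is nothing to compare your argument against. Your overall plan (build a topological strip between the two bridges and invoke the primal/dual crossing dichotomy) is the right one and is clearly what the authors have in mind.

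There is, however, a genuine inaccuracy in your first step. The nesting claim $\hull_l(\gamma_1)\subset \hull_l(\gamma_2)$ does \emph{not} follow in general from $\hull_r(\gamma_1)\subset \hull_r(\gamma_2)$, because the boundary condition $\xi$ on $\partial\Lambda$ may be non-planar. Concretely: take four disjoint open ``pillars'' $P_1,\dots,P_4$ in $\Lambda-R$ from points $q_1<q_2<q_3<q_4$ on $\partial_\north R$ up to points $p_1<p_2<p_3<p_4$ on $\partial\Lambda$, and let $\xi$ wire $p_1$ with $p_3$ and $p_2$ with $p_4$ (a crossing partition). For $L\subset(q_2,q_3)$ one gets bridges $\gamma_1=\{q_1,q_3\}$ and $\gamma_2=\{q_2,q_4\}$ with $\hull_r(\gamma_1)\subset\hull_r(\gamma_2)$ but $\hull_l(\gamma_2)\subset\hull_l(\gamma_1)$. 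Your planarity argument breaks because the connecting paths pass through $\xi$, which cannot be embedded planarly. (Note the statement still holds here: each of the gap segments is dual-connected to $\partial\Lambda$ between consecutive pillars, i.e.\ the second alternative.)

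The fix is not to argue nesting first, but to fold the $\partial\Lambda$ case into the construction from the outset. Work only with $\omega$-paths inside $\Lambda-R$: choose, in each bridge, a simple $\omega$-path whose endpoints lie in $\partial_\north R\cup\partial\Lambda$ and which realizes the relevant hull vertex; whenever such a path terminates on $\partial\Lambda$ (because the bridge uses $\xi$), treat that as a side of the strip. Then the strip $S$ is a topological rectangle with sides among $\pi_1,\pi_2,I_l,I_r$ and possibly arcs of $\partial\Lambda$, there is no primal connection from $\pi_1$ to $\pi_2$ inside $S$ (distinct components), and the crossing dichotomy yields either a dual path $I_l\leftrightarrow I_r$ or dual paths from each $I_l,I_r$ to $\partial\Lambda$. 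Your steps 2--4 are then correct once this is set up; only the order and the unconditional nesting claim need adjusting.
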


\subsection{Estimating the number of boundary bridges}

In this section, we bound the number of distinct induced boundary bridges over a segment of $\partial R$.

When sampling boundary conditions on $R\subset \Lambda$ under $\pi_\Lambda^\xi$, the induced bridges over $e$ and all properties of them, are measurable w.r.t.\ $\omega \restriction _{\Lambda-R^o}$. For any configuration $\omega$, we denote by $\Gamma^e=\Gamma^e(\omega \restriction_{\Lambda-R^o},\xi)$ the set of all bridges over $e$ corresponding to that configuration on $\Lambda$, with the above defined west and east orderings.

The main estimate on $|\Gamma^e|$, that will be key to the proof of Theorem~\ref{mainthm:1}, is the following.

\begin{proposition}\label{prop:bridge-bound}
Let $q\in(1,4]$ and fix $\alpha\in (0,1]$. Consider the critical FK model on $\Lambda=\Lambda_{n,n'}$ with $n'\geq \lfloor \alpha n\rfloor$, along with the subset $R=\Lambda_{n,n'/2}$. There exists $c(\alpha,q)>0$ such that for every $e\in \partial_\north R$, every boundary condition $\xi$, and every $K>0$,
\begin{align}\label{eq:bridge-bound}
\pi_\Lambda^\xi(\omega: |\Gamma^e|\geq K\log n)\lesssim n^{-cK}\,.
\end{align}
Moreover, there exists $c'(\alpha,q)>0$, and for every $\epsilon>0$ there is some $K_0(\epsilon)$, such that for every $e\in \llb n^{\epsilon}, n-n^{\epsilon}\rrb \times \{\lfloor \tfrac{n'}2\rfloor\}$, every boundary condition $\xi$, and every $K<K_0$,
\[\pi_\Lambda^\xi(\omega: |\Gamma^e|\geq K\log n)\gtrsim n^{-c'K}\,.
\]
\end{proposition}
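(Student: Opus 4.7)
The plan is to reduce the statement to estimates on disjoint alternating primal/dual arm events in the upper half $\Lambda-R^o$ of $\Lambda$ above $\partial_\north R$, using Facts~\ref{fact:disjoint-primal}--\ref{fact:disjoint-dual} to translate between bridges and arms, and then invoking Lemmas~\ref{lem:disjoint-1}--\ref{lem:disjoint-2} for the upper bound and the RSW estimates of~\S\ref{sub:prelim-equilibrium} (together with Lemma~\ref{lem:wired/free} and Corollary~\ref{cor:wired-free-wired-free} in the case $q=4$) for the lower bound.

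For the upper bound, consider the right-ordering $\gamma_1,\ldots,\gamma_k$ of the bridges. By Fact~\ref{fact:disjoint-dual}, between consecutive bridges in this ordering there exists a dual-connection in $\Lambda-R^o$; combined with the primal paths realizing the bridges themselves, and the observation that planarity forces nesting of the left-hulls as well as the right-hulls, the event $\{|\Gamma^e|\geq k\}$ contains a configuration of $2k-1$ nested alternating primal/dual chords in $\Lambda-R^o$, each joining a point left of $e$ to a point right of $e$ along $\partial_\north R$. Partitioning the possible right-lengths into the $O(\log n)$ dyadic intervals $[2^j,2^{j+1})$, on $\{|\Gamma^e|\geq K\log n\}$ a pigeonhole furnishes a scale $j^\star$ at which $\Omega(K)$ bridges have right-length in the corresponding interval; together with the interleaving dual separations, this produces $\Omega(K)$ alternating arms crossing the half-annulus at scale $j^\star$. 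Lemmas~\ref{lem:disjoint-1}--\ref{lem:disjoint-2} bound the probability of such a configuration by $n^{-cK}$ uniformly in $j^\star$ and in $\xi$, and the union bound over the $O(\log n)$ choices of $j^\star$ is absorbed into the constant.

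For the lower bound, we construct the $K\log n$ bridges explicitly at $\Theta(\log n)$ nested dyadic scales, with $K$ bridges per scale. At each scale $j$ in the range $[\log_2(CK),\,\epsilon\log_2 n]$---nonempty provided $e\in\llb n^\epsilon,n-n^\epsilon\rrb$ and $K\leq K_0(\epsilon)$---we require a configuration of $2K-1$ alternating primal/dual arms in the half-annulus at scale $j$, which can be realized with probability at least $p_K\geq c^K$ (for some $c(q)>0$) by iterating the RSW estimates of Proposition~\ref{prop:true-RSW} (for $q\in(1,4)$) or Theorem~\ref{thm:RSW} and Corollary~\ref{cor:wired-free-wired-free} (for $q=4$), together with a standard conditional interface-exploration argument to alternate the primal and dual types. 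Combining the sub-events across the $\Theta(\log n)$ disjoint dyadic scales via the Domain Markov property and the FKG inequality then yields $K$ nested bridges per scale and $\Theta(K\log n)$ bridges in total, with joint probability at least $p_K^{\Theta(\log n)}\geq n^{-c'K}$ as required.

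The central obstacle is the upper bound: for FK percolation with $q>1$ the classical van~den~Berg--Kesten inequality fails, so disjoint arm events cannot be bounded directly by a product of single-arm probabilities. Overcoming this requires the model-specific disjoint-occurrence estimates of Lemmas~\ref{lem:disjoint-1}--\ref{lem:disjoint-2}---the main technical input of this section---after which the reduction above is essentially combinatorial (planarity yields the alternating-arm picture, pigeonhole selects a dominant scale, and a union bound handles the upper tail). The matching lower bound is comparatively routine, drawing only on the RSW machinery already developed and a standard multi-scale gluing argument.
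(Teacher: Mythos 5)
The upper bound argument has a genuine gap. You pigeonhole the $K\log n$ bridges over the $O(\log n)$ dyadic scales of their right-lengths and extract a dominant scale $j^\star$ carrying $\Omega(K)$ bridges, then invoke Lemmas~\ref{lem:disjoint-1}--\ref{lem:disjoint-2} to claim these have probability $n^{-cK}$. But those lemmas give an exponential tail in the \emph{number of arms}: $\Omega(K)$ alternating arms yields only $e^{-\Omega(K)}$, not $e^{-\Omega(K\log n)} = n^{-\Omega(K)}$. Taking a union bound over the $O(\log n)$ choices of $j^\star$ then produces at best $\log n \cdot e^{-cK}$, which does not approach the stated $n^{-cK}$ and is useless for the intended application where $K$ is a constant. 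To obtain a bound of the form $n^{-cK}$ you must exhibit $\Omega(K\log n)$ events from the lemmas, not $\Omega(K)$.

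The paper does something subtler and strictly stronger: it observes that the set of bridges whose right-length more than doubles relative to their predecessor (in the right-ordering), or whose distance to the boundary more than halves, has \emph{deterministic} cardinality at most $2\log_2 n \leq 3\log n$. Hence on $\{|\Gamma^e|\geq K\log n\}$ the complementary set $\Gamma_0^e\subset\Gamma_1^e\cup\Gamma_2^e$ already has $\geq(K-3)\log n$ elements, and Lemmas~\ref{lem:disjoint-1}--\ref{lem:disjoint-2} applied with $m=\tfrac{K-3}{2}\log n$ deliver $n^{-c(K-3)/2}$. In other words, the key is a complementary count of the ``scale-changing'' bridges, not a pigeonhole selecting one dominant scale; the former retains a $\log n$ factor in the exponent, the latter loses it.

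Your lower bound, by contrast, is sound but takes a more laborious route than the paper's: you ask for $K$ nested bridges per scale across $\Theta(\log n)$ scales, which requires its own multi-arm estimate (that $2K-1$ alternating arms in a single half-annulus occur with probability $\geq c^K$). The paper sidesteps this by placing a \emph{single} alternating primal/dual half-annulus arc at each of $2K\log n$ nested dyadic scales, each with constant probability by RSW, and gluing via FKG and the domain Markov property to get $p^{2K\log n}=n^{-c'K}$ directly. Both give the right order, but the paper's version needs only one-arm RSW input.
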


In addition to the tail behavior of $|\Gamma^e|$, we can also classify its typical behavior, showing that a fixed edge indeed has order $\log n$ bridges over it with high probability (cf.~the case of $p\neq p_c(q)$ where this quantity is typically $O(1)$).

\begin{corollary}\label{cor:typical-bridges}
Let $q\in(1,4]$ and $\alpha \in (0,1]$. Consider a rectangle $\Lambda=\Lambda_{n,n'}$ with $n'\geq \lfloor \alpha n \rfloor$, along with $R=\Lambda_{n,n'/2}$. There exists $c(\alpha,q)>0$, and for every $\epsilon>0$, there exist $K'>K(\epsilon)>0$, such that for every $e\in \llb n^{\epsilon}, n-n^{\epsilon} \rrb \times \{\lfloor \frac {n'}2\rfloor \}$ and every $\xi$,
\[\pi_{\Lambda}^\xi\left( |\Gamma^e|\notin \llb K\log n, K'\log n\rrb \right) \leq n^{-c}\,.
\]
\end{corollary}

The model at $p=p_c(q)$, $q\in (1,4]$ is believed to be scale-invariant; in line with this, having nested bridges whose length grow exponentially induces $c\log n$ clusters ranging in scale between $O(1)$ and $O(n^\epsilon)$. Indeed, this is how the lower bound of Corollary~\ref{cor:typical-bridges} is obtained. It will, therefore, be important for us to split the set $\Gamma^e$ into those bridges according to their proximity to their interior bridges, as well as the boundary.  

For the rest of this subsection, since $e$ is fixed, if $e$ is in the western half of $\partial_\north R$ then we will use the east-ordering of $\Gamma^e$ and otherwise we will use the west-ordering of $\Gamma^e$. If $e$ is in the western half of $\partial_\north R$ define the following subsets of $\Gamma^e$:
\begin{align*}
\Gamma^e_1=\Gamma^e_1(\omega\restriction_{\Lambda-R^o},\xi)= & \left\{\gamma_i\in \Gamma^e\,:\;\ell_\east(\gamma_{i-1})\leq \tfrac n6,\,\ell_\east(\gamma_i)\leq 2\ell_\east(\gamma_{i-1})\right\}\,, \\
\Gamma^e_2=\Gamma^e_2(\omega\restriction_{\Lambda-R^o},\xi)= & \left\{\gamma_i\in \Gamma^e\,:\;\ell_\east(\gamma_{i-1})\geq \tfrac n6,\,n-x-\ell_\east(\gamma_i)\geq \tfrac12 \left(n-x- \ell_\east(\gamma_{i-1})\right)
\right\}.
\end{align*}
For $e$ in the eastern half of $\partial_\north R$, define $\Gamma_1^e$ and $\Gamma_2^e$ analogously, by replacing $\ell_\east$ with $\ell_\west$ and $n-x$ with $x$.
For convenience, let $\gamma_0$ be the possibly nonexistent bridge given by the two vertices incident to the edge $e$, 
which will allow us to treat $\gamma_1$ as we would treat the other $\gamma_i$'s.

Before proving Proposition~\ref{prop:bridge-bound}, we present the two lemmas central to the upper bound~\eqref{eq:bridge-bound} of Proposition~\ref{prop:bridge-bound}, proving exponential tails on each of $|\Gamma_1^e|$ and $|\Gamma_2^e|$ beyond $O(\log n)$.  Together, these will imply the $O(\log n)$ upper bound on $|\Gamma^e|$, so we defer the proofs of the two lemmas until after completing the proof of Proposition~\ref{prop:bridge-bound} using the lemmas.  We conclude this section with a proof of Corollary~\ref{cor:typical-bridges}.

\begin{lemma}\label{lem:disjoint-1}
There exists $c_1(\alpha,q)>0$ such that for every $e\in \partial_\north R$, $\xi$ and $K> 0$,
\[\pi_\Lambda^\xi\left(\omega : |\Gamma_1^e|
\geq K\log n\right)\lesssim n^{-c_1K}\,.
\]
\end{lemma}

\begin{lemma}\label{lem:disjoint-2}
There exists $c_2(\alpha,q)>0$ such that for every $e\in \partial_\north R$, $\xi$ and $K> 0$,
\[
\pi_\Lambda^\xi\left(\omega : |\Gamma_2^e|\geq K\log n \right)\lesssim n^{-c_2K}\,.
\]
\end{lemma}

With these two lemmas in hand the proof of Proposition~\ref{prop:bridge-bound} is greatly simplified.

\begin{proof}[\textbf{\emph{Proof of Proposition~\ref{prop:bridge-bound}}}]

We begin with the upper bound. Fix an edge $e\in \partial_\north R$ and a boundary condition $\xi$ on $\partial \Lambda$. Without loss of generality suppose that $e$ is in the western half of $\partial_\north R$ and use the east-ordering of $\Gamma^e=\{\gamma_1,\gamma_2,...,\gamma_{|\Gamma^e|}\}$.
Observe that violating the second condition in $\Gamma_1^e$ means that $\ell_\east(\gamma_i)$ has at least doubled the length of its predecessor, whereas violating the second condition in $\Gamma_2^e$ means that $n-x-\ell_\east(\gamma_i)$ is at most half the corresponding quantity of its predecessor. Noting that violating the length condition of $\ell_\east(\gamma_{i-1})$ (compared to $n/6$) is disjoint between $\Gamma_1^e$ and $\Gamma_2^e$, and since $\ell_\east(\gamma_i)\leq n$ and $n-x-\ell_\east(\gamma_i)\geq 1$ for all $i$, we deterministically have
\begin{align*}
|\Gamma^e-(\Gamma_1^e\cup\Gamma_2^e)|\leq 2\log_2 n\leq 3\log n\,.
\end{align*}
Using a union bound,
\begin{align*}
\pi_\Lambda^\xi\left(|\Gamma_1^e\cup\Gamma_2^e|\geq (K-3)\log n\right)\leq  \pi_\Lambda^\xi\left(|\Gamma_1^e|\geq\tfrac {K-3}2 \log n\right)  +\pi_\Lambda ^\xi\left(|\Gamma_2^e|\geq\tfrac {K-3}2\log n\right)\,.
\end{align*}
The bounds on the two terms on the right-hand side are given by Lemmas~\ref{lem:disjoint-1}--\ref{lem:disjoint-2}, respectively. Taking the minimum of $c_1,c_2$ in those lemmas then implies that there exists $c(\alpha,q)>0$ such that
\[\pi_\Lambda^\xi(|\Gamma^e|\geq K\log n)\lesssim n^{-c(K-3)/2}\,.
\]

In order to prove the lower bound, for any $\epsilon > 0$, fix any edge $e=(x,\lfloor \tfrac {n'}2\rfloor )$ with $x\in \llb n^{\epsilon},n-n^{\epsilon}\rrb$. For $i\geq 1$, suppressing the dependence on $e$, define the sets
\begin{align}
\tilde R^\north_i & =\llb x-2^{i+1}, x+2^{i+1} \rrb \times \llb \lfloor \tfrac {n'}2\rfloor+2^i,\lfloor \tfrac {n'}2\rfloor +2^{i+1}\rrb\,, \nonumber \\
\tilde R^\east_i & =\llb x+2^i, x+2^{i+1}\rrb \times \llb \lfloor \tfrac {n'}2\rfloor-2^i,\lfloor \tfrac {n'}2\rfloor+2^{i+1}\rrb\,, \label{eq:r-tilde} \\
\tilde R^\west_i & = \llb x-2^{i+1}, x-2^i \rrb \times \llb \lfloor \tfrac {n'}2\rfloor-2^{i}, \lfloor \tfrac{n'}2\rfloor +2^{i+1}\rrb\,, \nonumber
\end{align}
and their respective subsets,
\begin{align}
 R^\north_i & =\llb x-2^{i+1}+ {2^{i-1}}, x+2^{i+1}-{2^{i-1}} \rrb \times \llb \lfloor \tfrac {n'}2  \rfloor+ 2^i,\lfloor \tfrac {n'}2\rfloor +2^{i}+2^{i-1}\rrb\,, \nonumber\\
 R^\east_i & =\llb x+2^i, x+2^{i}+2^{i-1}\rrb \times \llb \lfloor \tfrac {n'}2\rfloor,\lfloor \tfrac {n'}2\rfloor+2^{i}+2^{i-1}\rrb\,, \label{eq:r-no-tilde}\\
  R^\west_i & = \llb x-2^{i}-2^{i-1}, x-2^i \rrb \times \llb \lfloor \tfrac {n'}2\rfloor, \lfloor \tfrac{n'}2\rfloor +2^{i}+2^{i-1}\rrb\,. \nonumber
\end{align}
When $K < K_0:=\frac{\epsilon \log 4}4$, for every $i\leq 2K\log n$, we have $\tilde R_i^\west,\tilde R_i^\north,\tilde R_i^\east \subset \Lambda$. Also define the following crossing events.
\begin{align*}
\mathcal A_{i}=\cC_v(R_i^\west)\cap \cC_h(R_i^\north)\cap \cC_v(R_i^\east)\,, \\
\mathcal A^\ast_{i}=\cC_v^\ast(R_i^\west)\cap \cC_h^\ast(R_i^\north)\cap \cC_v^\ast(R_i^\east)\,.
\end{align*}
Then by definition of distinct bridges in $\Lambda-R^o$, we observe that for each $k$,
\begin{align}\label{eq:A-A-star}
\{|\Gamma^{e}|\geq K\log n\} \supset \bigcap_{i=1}^{K\log n} \mathcal A_{2i-1}\cap \mathcal A^\ast_{2i}\,.
\end{align}
By monotonicity in boundary conditions, the FKG inequality, and Theorem~\ref{thm:RSW}, there exists $p(\alpha,q)>0$ such that for every $i\leq 2K\log n$,
\begin{align*}
\pi_\Lambda^{\xi} (\mathcal A_i) & \geq \pi^0_{\tilde R_i^\west}(\cC_v(R_i^\west))\pi^0_{\tilde R_i^\north}(\cC_h(R_i^\north))\pi^0_{\tilde R_i^\east}(\cC_v(R_i^\east))\geq p\,, \\
\pi_\Lambda^{\xi} (\mathcal A^\ast_i) & \geq \pi^1_{\tilde R_i^\west}(\cC^\ast_v(R_i^\west))\pi^1_{\tilde R_i^\north}(\cC^\ast_h(R_i^\north))\pi^1_{\tilde R_i^\east}(\cC^\ast_v(R_i^\east))\geq p\,.
\end{align*}
Thus, if $K< K_0$, we have $\pi_{\Lambda}^\xi (|\Gamma^e|\geq K\log n)\geq p^{2K\log n}$, as required.
\end{proof}

We now prove Lemmas~\ref{lem:disjoint-1}--\ref{lem:disjoint-2}, whose proofs constitute the majority of the work in obtaining Proposition~\ref{prop:bridge-bound}.

\begin{proof}[\textbf{\emph{Proof of Lemma~\ref{lem:disjoint-1}}}]
Assume without loss of generality that $e=\llb x-1 , x\rrb \times \lfloor \frac {n'}2\rfloor$ is such that $x\leq \frac n2$ and use the east-ordering of $\Gamma^e=\{\gamma_1,\gamma_2,...,\gamma_{|\Gamma^e|}\}$. In order to obtain an upper tail on $|\Gamma_1^e|$, let us describe a revealing procedure for the FK configuration $\omega$ on $\Lambda-R^o$ under $\pi_\Lambda^\xi$. 

Let $F=\llb 0,n\rrb \times \llb \lfloor \frac{n'}2\rfloor ,n'\rrb = \Lambda - R^o$ (so that $\omega\restriction_{F}, \xi$ is the set of connections with respect to which the existence/properties of bridges are measurable).
We can sequentially reveal $\gamma_1,\ldots,\gamma_{|\Gamma^e|}$ by exposing the open clusters (in $F$) containing vertices of $\partial_\north \Lambda$ one at a time, starting from those adjacent to the right-vertex of $e$. Such procedures for exposing the clusters have been used in related settings (see, e.g.,~\cite{LScritical,BlSi15,GL16}); we formally describe the procedure here since in our case it involves long-range interactions imposed by the FK boundary conditions. One can reveal the open cluster $\mathcal C_v$ containing a vertex $v$ in the set $F$ by 
\begin{enumerate}
\item initializing the set $\mathcal C = \{v\}$;
\item exposing the values of $\omega$ on  all edges in $E(F)$ that contain vertices in $\mathcal C$; 
\item adding to $\mathcal C$ any vertices that are now connected by a path of open edges to $\{v\}$ (including possibly the connections imposed by the boundary condition $\xi$);
\item  repeating the process from step (1) with the new set $\mathcal C$. 
\end{enumerate}
Any open cluster containing vertices in $\partial_\north R$ on both the right and left sides of $e$ is a bridge over $e$. 
In order to reveal the first $m$ bridges over the edge $e$, we can iteratively reveal the open clusters of $\partial_\north R$ in $F$, starting initially with the cluster of $(x,\lfloor \frac {n'}{2}\rfloor)$, and continuing  to the right along $\partial_\north R$, until $m$ distinct bridges have been exposed. Using this revealing procedure, the edges which are revealed in order to expose the first $m$ bridges over $e$ are either enclosed by $\gamma_m$ and $\partial_\north R$, or belong to the outer boundary of $\gamma_m$ and are closed, thus forming a bounding dual-path.  

Let $(\mathcal F_m)$ be the filtration associated with the above revealing process for the bridges ($\cF_m$ reveals $\gamma_1,\ldots,\gamma_m$) over the edge $e$. 
Our aim is to prove that for every $m\geq 1$, 
\begin{align}\label{eq:need-to-show-bridge-1}
\pi_{\Lambda}^{\xi} (\gamma_m \in \Gamma_1^e \mid \mathcal F_{m-1}) \leq p \,,
\end{align}
(if $\gamma_m$ doesn't exist, we vacuously say $\gamma_m \notin \Gamma_1^e$) for the choice of  \begin{align}\label{eq:p-choice}
p=1-p_1p_2p_3<1\,,
\end{align} where $p_1(\alpha,q), p_2(\alpha,q),p_3(\alpha,q)>0$ are defined as follows:
\begin{itemize}
\item $p_1$ is given by Proposition~\ref{prop:true-RSW} with aspect ratio $1$ for $1<q<4$ and by Lemma~\ref{lem:wired/free} with the choice $\epsilon=1/2$ and aspect ratio $1/2$ for $q=4$,
\item $p_2$ is the probability given by Theorem~\ref{thm:RSW} for $\epsilon=1/4$ and aspect ratio $6\vee \alpha^{-1}$,
\item $p_3$ is the probability given by Theorem~\ref{thm:RSW} for $\epsilon=1/3$ and aspect ratio $1$.
\end{itemize}

Let us first conclude the proof of Lemma~\ref{lem:disjoint-1} given~\eqref{eq:need-to-show-bridge-1}. By iteratively conditioning on $(\mathcal F_{i})_{i\geq 1}$ we see that the sequence of indicators $(\mathbf 1\{\gamma_i \in \Gamma_1^e\})_{i\geq 1}$ is stochastically dominated by the i.i.d.\ sequence $(Z_i)_{i\geq 1}$ where $Z_i \sim \mbox{Bernoulli}(p)$. At the same time, by definition of the set $\Gamma_1^e$, through this revealing process, as soon as $\lceil \log_2 n\rceil$ many of the indicators $(\mathbf 1\{\gamma_i\in \Gamma_1^e\})$ are zero, all subsequent ones are deterministically zero (note that once $\ell_\east(\gamma_i)>n/6$, every subsequent bridge will also have this property). Therefore, we can bound 
\begin{align*}
\pi_\Lambda^\xi (|\Gamma_1^e| \geq r) \leq \mathbb P \big(\mbox{Bin}(r+\lceil \log_2 n\rceil -1, p)\geq r\big)
\end{align*}
which, upon taking $r = K\log n$ and using, say, Hoeffding's inequality once $K\geq 2p^{-1}$, yields the desired estimate. 

We now turn to proving the conditional estimate of~\eqref{eq:need-to-show-bridge-1}. 
First observe that by Fact~\ref{fact:disjoint-dual} and the definition of $\hull_\east(\gamma_{{m-1}})$, if $L_\west,L_\east$ are the two connected subsets of $\partial_\north R-\hull_\east(\gamma_{{m-1}})$, the event
\[\mathcal E_{m}= \left\{L_\west \stackrel{F^\ast}\longleftrightarrow L_\east \mbox{ or } L_\east\stackrel{F^\ast}\longleftrightarrow \partial \Lambda\right\}
\]
satisfies $\mathcal E_{m}\supset \{|\Gamma^e|\geq m\} \supset \{\gamma_{m}\in \Gamma_1^e\}$. 
In fact, the revealing process of $\cF_{m-1}$ reveals precisely the dual-path that bounds the open cluster of $\gamma_{m-1}$, and that dual-path is either a dual connection from $L_\west$ to $L_\east$ in $F^\ast$, or it is the west-most dual crossing from $L_\east$ to $\partial \Lambda$ that is to the right of $\gamma_{m-1}$. Either way, denote by $\zeta$ the dual-bridge/crossing revealed as such by $\cF_{m-1}$ (see Fig.~\ref{fig:disjoint-1}), and let $(z,\lfloor \frac{n'}2\rfloor)$ be the west-most point of $\zeta \cap \partial_\north R$.

Let $k = \ell_\east(\gamma_{m-1})$; in order for $\gamma_m \in \Gamma_1^e$, necessarily
$\ell_\east(\gamma_m)\leq  2k$ and
\[ (z,\lfloor \tfrac{n'}2\rfloor) \in \llb x+k,x+2k\rrb \times \{\lfloor \tfrac {n'}2\rfloor \} =: I\,.\]
We will establish the desired upper bound of~\eqref{eq:need-to-show-bridge-1} uniformly over $\mathcal F_{m-1},\zeta$ and $k$. It suffices to only consider $k\leq \frac n6$ because otherwise, $\ell_\east(\gamma_m)>\frac n6$ and therefore $\gamma_m \notin \Gamma_1^e$ deterministically. 

\begin{figure}
  \hspace{-0.15in}
  \begin{tikzpicture}

    \newcommand{\xfigshift}{-50pt}
    \newcommand{\yfigshift}{31.5pt}

      \node (plot) at (0,0){\includegraphics[width=0.64\textwidth]{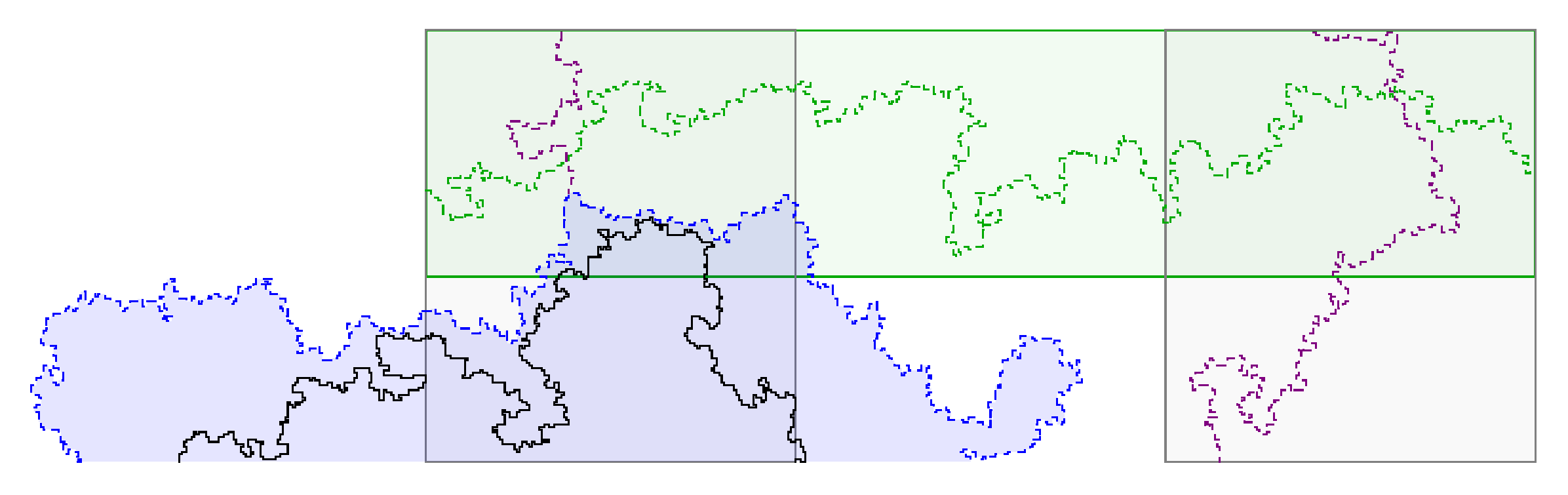}};

    \begin{scope}[shift={($(plot.south west)+(\xfigshift,\yfigshift)$)}, x={($(plot.south east)+(\xfigshift,\yfigshift)$)},y={($(plot.north west)+(\xfigshift,\yfigshift)$)}, font=\small,scale=1.38]

      \draw[color=black] (.95,.95) -- (0,.95) -- (0,-0.177) -- (.95,-0.177);
      \draw[color=black] (0.01,0.42) -- (-0.01,0.42);

      \draw[color=black] (0,-0.177) -- (0,-.25);
      \draw[color=black] (.335,-0.202) -- (.335,-0.177);
      \draw[color=black] (.5015,-0.202) -- (.5015,-0.177);
      \draw[color=black] (.668,-0.202) -- (.668,-0.177);
      \draw[color=black] (.8335,-0.202) -- (0.8335,-0.177);

      \node[font=\tiny] at (-0.03,-0.17) {$\tfrac{n'}2$};
      \node[font=\tiny] at (-0.05,0.42) {$\tfrac{n'}2+k$};
      \node[font=\tiny] at (-0.024,0.95) {$n'$};

      \node[font=\tiny] at (0.333,-0.23) {$x$};
      \node[font=\tiny] at (0.503,-0.23) {$x+k$};
      \node[font=\tiny] at (0.674,-0.23) {$x+2k$};
      \node[font=\tiny] at (0.839,-0.23) {$x+3k$};
      \node[font=\tiny] at (.586,-0.23) {$z$};
   \node[font=\Large] at (.525,.72) {$\Lambda-R$};

      \node[color=blue] at (0.56,0) {$\zeta$};
      \node[color=DarkGray] at (0.749,-.24) {$R_3$};
      \node[color=DarkGreen] at (0.586,.47) {$R_2$};
      \node[color=black] at (0.44,-0.1) {$\gamma_{{m-1}}$};
      \node[color=DarkGray] at (0.418,-.24) {$R_1$};

    \end{scope}
  \end{tikzpicture}
  \caption{After conditioning on $\zeta$ (via the configuration in the blue shaded region), the probability of the purple and green dual-crossings is greater than $p_1p_2p_3$, bounding the probability of $\{\gamma_m \in \Gamma_1^e\}$.}
  \label{fig:disjoint-1}
\end{figure}

Note that conditional on $\mathcal F_{m-1}$ (which contains the $\sigma$-algebras generated by $\zeta$ and $k$), by Fact~\ref{fact:disjoint-primal}, the event $\{\gamma_m \in \Gamma_1^e\}$ implies the event $S$, stating that either $\zeta$ is a dual-bridge and $I$ is primal-connected in $F\cup\xi$ to the left component of $\partial_\north R - \hull_\east(\zeta)$, or alternatively $\zeta$ is a dual-crossing to $\partial \Lambda$ and $I$ is primal-connected to $\partial \Lambda$ in $F$. Thus, in this conditional space,
\begin{align}\label{eq:dual-bridge-condition}
\{\gamma_m \notin \Gamma_1^e\} \supset \left\{ \zeta \stackrel{F^*} \longleftrightarrow \llb x+2k, x+3k\rrb \times \{\lfloor\tfrac {n'}2\rfloor\}\right\}\,,
\end{align}
since the right-hand side of Eq.~\eqref{eq:dual-bridge-condition} implies $S^c$ which implies the left-hand side.

In order to lower bound the probability of the last event in Eq.~\eqref{eq:dual-bridge-condition}, let $D^*$ be the outer (if $\zeta$ is a dual-crossing in $F$, then eastern) connected component of $F^*-\zeta$, and let $D$ be its dual. 
Define also the following subsets of $\Lambda$:
\begin{align*}
R_1= & \llb x,x+k\rrb \times \llb \lfloor \tfrac {n'}2\rfloor,\lfloor\tfrac{n'}2\rfloor+ \min\{k,\tfrac{\alpha n}4\}\rrb\,, \\
R_2= & \llb  x,x+3k\rrb \times \llb \lfloor\tfrac{n'}2\rfloor+\min\{\tfrac k2,\tfrac{\alpha n}8\},\lfloor\tfrac{n'}2\rfloor+\min\{k,\tfrac{\alpha n}4\}\rrb\,, \\
R_3= & \llb x+2k,x+3k\rrb \times \llb \lfloor\tfrac{n'}2\rfloor,\lfloor\tfrac{n'}2\rfloor+\min\{k,\tfrac{\alpha n}4\}\rrb\,,
\end{align*}
whereby, the event in the right-hand side of Eq.~\eqref{eq:dual-bridge-condition} can be written as $\{\zeta\stackrel{F^\ast}\longleftrightarrow \partial_\south R_3\}$.
For any $i=1,2,3$, define the following crossing events (see Fig.~\ref{fig:disjoint-1}):
\begin{align}\label{eq:Ch-Cv}
\begin{array}{l}\cC^\ast_{v} (R_i \cap D) =\left \{ \partial_\east R_i \stackrel{R_i\cap D} \nconn \partial_\west R_i \right\}\,, \\
\noalign{\medskip}
\cC^\ast_{h} (R_i \cap D) =\left \{\partial_\north R_i \stackrel{R_i\cap D} \nconn \partial_\south R_i \right\}\,.
\end{array}
\end{align}
(observe that  implicit in $(\cC^\ast_v(R_i\cap D))^c$  is the event $\{\partial_{\east} R_i \cap D\neq \emptyset\}\cap\{ \partial_\west R_i \cap D\neq \emptyset\}$, and similarly, implicit in $(\cC^\ast_h(R_i\cap D))^c$ is  the event $\{\partial_{\north} R_i \cap D\neq \emptyset \}\cap \{  \partial_\south R_i \cap D\neq \emptyset\}$).
\begin{claim}\label{claim:Gamma_1-bound}
Conditional on $\mathcal F_{m-1}$ (and in particular also $\zeta$ and $k$),
\[\{\gamma_{m} \notin \Gamma_1^e\} \supset \Big( \cC^*_v(R_1\cap D)\cap \cC_h^*(R_2 \cap D) \cap \cC^*_v(R_3\cap D)\Big)\,.
\]
\end{claim}
\begin{proof}
Suppose that $\omega$ satisfies the events on the right-hand. Recall that  $\zeta$ is such that  $\partial_\east R_3 \cap D\neq \emptyset$ and $\partial_\west R_3\cap D\neq \emptyset$, and
  $\partial_\east R_3 \nconn \partial_\west R_3$ in $R_3 \cap D$ since $\omega\in \cC_v^\ast (R_3\cap D)$.
 Consider $R_3\cap D$ with boundary conditions wired on $\partial_{\east,\west} R_3\cap D$ and free on $\zeta$ and $\partial_{\north,\south} R_3\cap D$; then the boundary conditions on $R_3\cap D$ alternate between free and wired on boundary curves ordered clockwise as $L^w_1,L^f_1,L^w_2,L^f_2...$; by planarity and the choice of generalized Dobrushin boundary conditions, for any two wired boundary curves $L^w_i,L^w_{i+1}$, either $L^w_i\longleftrightarrow L^w_{i+1}$, or  $L^f_{i} \stackrel{\ast}\longleftrightarrow L^f_{j}$ for some $j\neq i$. Picking the two wired segments of $\partial_{\east,\west}R_3\cap D$ closest to $\partial_\south R_3$, the aforementioned fact  that $\partial_\east R_3\nconn \partial_\west R_3$ in $R_3\cap D$ implies that either $\partial_\south R_3\stackrel{\ast}\longleftrightarrow \zeta$ or $\partial_\south R_3\stackrel{\ast}\longleftrightarrow \partial_\north R_3$. In the former, $\{\gamma_m\notin \Gamma_1^e\}$ holds by Eq.~\eqref{eq:dual-bridge-condition}, so suppose only the latter holds and call the dual-crossing $\zeta_3$.

Since $\partial _\south R_3\stackrel{\ast}\longleftrightarrow \partial_\north R_3$, both $\partial_\south R_2\cap D$ and $\partial_\north R_2\cap D$ are nonempty. Clearly, $\zeta_3$ splits $R_2\cap D$ into the subset to its east, $U_\east$, and that to its west, $U_\west$. Consider the set to its east, $U_\east$, with boundary conditions that are wired on $\partial_{\south,\north} R_2\cap D$ and free on $\zeta$ and on $\partial_{\east,\west} R_2\cap D$. Since $\zeta_3$ and $\zeta$ are vertex-disjoint (by our assumption that $\partial_\south R_3\stackrel{*}\nconn \zeta$ in $R_3\cap D$), and the wired boundary segments adjacent to $\zeta_3$ are disconnected in $U_\east$, it must be that either $\zeta_3\stackrel{\ast}\longleftrightarrow \zeta$ or $\zeta_3\stackrel{\ast}\longleftrightarrow \partial_\east R_2$ in $U_\east$. Using the same reasoning on $U_\west$, either $\zeta_3\stackrel{\ast}\longleftrightarrow \zeta$ or $\zeta_3\stackrel{\ast}\longleftrightarrow \partial_\west R_2$ in $U_\west$. Combining these, either $\zeta\stackrel{\ast}\longleftrightarrow \zeta_3$, in which case $\zeta\stackrel{\ast}\longleftrightarrow \partial_\south R_3$, or alternatively $\partial_\east R_2\stackrel{\ast}\longleftrightarrow \zeta_3\stackrel{\ast}\longleftrightarrow \partial_\west R_2$ in $R_2\cap D$. In the former case, by Eq.~\eqref{eq:dual-bridge-condition}, $\{\gamma_m \notin \Gamma_1^e\}$; assume therefore that only the latter case holds, and let $\zeta_2$ be a dual-crossing between $\partial_\east R_2$ to $\partial_\west R_2$ that intersects $\zeta_3$.

Finally, we can deduce that $\partial_\east R_1\cap D$ and $\partial_\west R_1\cap D$ are nonempty as $\zeta_2$ and $\zeta$ are vertex-disjoint (by our assumptions $\zeta \stackrel{*}\nconn \zeta_3$ and $\zeta_2 \stackrel*\longleftrightarrow\zeta_3$). Considering now $U_\south$, the subset of $R_1\cap D$ south of $\zeta_2$ with wired boundary conditions on $\partial_{\east,\west}R_1\cap D$ and free elsewhere, as before we deduce that either $\zeta_2\stackrel{\ast}\longleftrightarrow \zeta$ or $\zeta_2\stackrel{\ast}\longleftrightarrow \partial_\south R_1$ in $U_\south$. Since, by definition of $\zeta$, deterministically $\partial_\south R_1\cap D=\emptyset$, the former must hold, and $\zeta\stackrel{\ast}\longleftrightarrow \partial_\south R_3$ through $\zeta_2$ and $\zeta_3$, and Eq.~\eqref{eq:dual-bridge-condition} concludes the proof.
\end{proof}

We will next bound the probability of each of the events $\cC_v^*(R_1\cap D)$, $\cC_h^*(R_2\cap D)$ and $\cC_v^*(R_3\cap D)$, which, using the above claim, will translate to a bound on $\{\gamma_m \notin \Gamma_1^e\}$.

To see this, first note that by planarity, for all $i=1,2,3$ and every subset $D$,
\begin{align}\label{eq:crossing-inclusion}
\cC_{v}^\ast (R_i\cap D)\supset (\cC_h(R_i))^c = \cC_{v}^\ast(R_i)\,,
\end{align}
and likewise for horizontal crossing events.
Define the rectangle $\tilde R_1\supset  R_1$ by
\[\tilde R_1=\llb x,x+k \rrb \times \llb \lfloor \tfrac{n'}2\rfloor,n'\rrb\subset \Lambda\,.
\]
Let the boundary conditions $(1,0)$ on $\tilde R_1$ be free on $\partial_\south \tilde R_1$ and wired on $\partial _{\north,\east,\west} \tilde R_1$.
 Combining  Eq.~\eqref{eq:crossing-inclusion}, monotonicity in boundary conditions, and the domain Markov property, we get for $p_1(\alpha,q)>0$ given by Eq.~\eqref{eq:p-choice},
\begin{align*}
\pi_\Lambda^\xi\left(\cC_v^\ast(R_1 \cap D)\mid \ell_\east(\gamma_{m-1})=k,\mathcal F_{m-1},\zeta\right)& \geq \pi ^{1}_{\tilde R_1}(\cC_v^\ast(R_1\cap D)\mid\ell_\east(\gamma_{m-1})=k,\mathcal F_{m-1}, \zeta)\, \\
& \geq \pi ^{1,0}_{\tilde R_1} (\cC^\ast_v(R_1))\geq p_1\,,
\end{align*}
where the last inequality follows from Proposition~\ref{prop:true-RSW}, Lemma~\ref{lem:wired/free} and self-duality.
We stress that wiring of $\partial_{\north,\east,\west}\tilde R_1$ allowed us to ignore the information revealed on $R_1 - D$ as far as the configuration in $R_1\cap D$ is concerned, and the fact that $\omega\restriction_\zeta$ is closed allowed us to place a free boundary on $\partial_\south \tilde R_1$, supporting Lemma~\ref{lem:wired/free}.

Next, consider the rectangle $\tilde R_2\supset R_2$ defined by
\[\tilde R_2=\llb x-k,x+4k\rrb \times \llb \lfloor \tfrac{n'}2 \rfloor,n'\rrb\,,
\] so that $\tilde R_2\subset \Lambda$ since $k=\ell_\east(\gamma_{{m-1}})\leq n/6$. By monotonicity in boundary conditions and Eq.~\eqref{eq:crossing-inclusion}, we get that for the choice of $p_2(\alpha,q)>0$ given by Eq.~\eqref{eq:p-choice},
\begin{align*}
\pi ^\xi_\Lambda (\cC_h^\ast(R_2\cap D)\mid \ell_\east(\gamma_{m-1})=k,\mathcal F_{m-1},\zeta)& \geq \pi ^1_{\tilde R_2} (\cC^\ast_h(R_2\cap D)\mid \ell_\east(\gamma_{m-1})=k,\mathcal F_{m-1},\zeta)\, \\
& \geq \pi^1_{\tilde R_2} (\cC^\ast_h (R_2))\geq p_2\,.
\end{align*}
Similarly, applying the exact same treatment of $\tilde R_2$ to
\[\tilde R_3 = \llb x+k,x+4k\rrb \times \llb \tfrac{n'}4,n'\rrb\subset \Lambda\,,
\]
(it is possible to encapsulate $R_3$ by a rectangle with wired boundary conditions since $\zeta$ does not intersect $\partial_\south R_3$ in our conditional space) shows that
\begin{align*}
\pi ^\xi_\Lambda (\cC_v^\ast(R_3\cap D)\mid \ell_\east(\gamma_{m-1})=k,\mathcal F_{m-1},\zeta)& \geq \pi ^1_{\tilde R_3} (\cC^\ast_v(R_3\cap D)\mid \ell_\east(\gamma_{m-1})=k,\mathcal F_{m-1},\zeta)\, \\
& \geq \pi^1_{\tilde R_3} (\cC^\ast_v (R_3))\geq p_3\,,
\end{align*}
for $p_3(\alpha,q)>0$ as defined in Eq.~\eqref{eq:p-choice}.

Putting these all together, by the FKG inequality and Claim~\ref{claim:Gamma_1-bound},
\[
\pi_\Lambda^\xi \left( \gamma_m \notin \Gamma_1^e \given \mathcal F_{m-1} \right) \geq p_1p_2p_3\,,\]
implying the desired~\eqref{eq:need-to-show-bridge-1}, and concluding the proof. 
\end{proof}

\begin{figure}
  \hspace{-0.15in}
  \begin{tikzpicture}

    \newcommand{\xfigshift}{10pt}
    \newcommand{\yfigshift}{33pt}

      \node (plot) at (0,0){\includegraphics[width=0.79\textwidth]{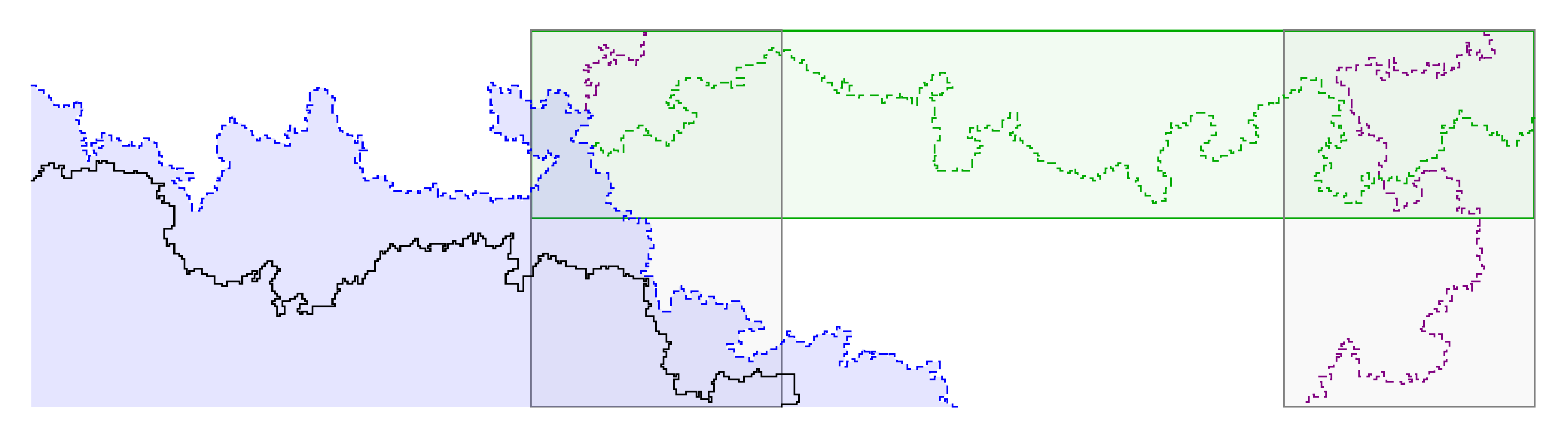}};

    \begin{scope}[shift={($(plot.south west)+(\xfigshift,\yfigshift)$)}, x={($(plot.south east)+(\xfigshift,\yfigshift)$)},y={($(plot.north west)+(\xfigshift,\yfigshift)$)}, font=\small,scale=1.265]

      \draw[color=black] (0,1) -- (0.87,1) -- (.87,-0.177) -- (0,-0.177);
      \draw[color=black] (0.88,0.43) -- (.86,0.43);

      \draw[color=black] (.87,-0.177) -- (.87,-.25);
      \draw[color=black] (.247,-0.202) -- (.247,-0.177);
      \draw[color=black] (.371,-0.202) -- (0.371,-0.177);
      \draw[color=black] (.618,-0.202) -- (.618,-0.177);
      \draw[color=black] (.741,-0.202) -- (.741,-0.177);

      \node[font=\tiny] at (.89,-0.17) {$\tfrac{n'}2$};
      \node[font=\tiny] at (.91,0.44) {$\tfrac{n'}2+\tfrac l4$};
      \node[font=\tiny] at (.89,0.99) {$n'$};

      \node[font=\tiny] at (0.249,-0.25) {$n-\tfrac{7l}6$};
      \node[font=\tiny] at (0.37,-0.25) {$n-l$};
      \node[font=\tiny] at (0.62,-0.25) {$n-\tfrac {l}3$};
      \node[font=\tiny] at (0.745,-0.25) {$n-\tfrac l6$};

      \node[color=blue] at (.17,.36) {$\zeta$};
      \node[color=DarkGray] at (0.312,-0.25) {$R_3$};
      \node[color=DarkGreen] at (.51,.5) {$R_2$};
      \node[color=black] at (0.17,-0.08) {$\gamma_{{m-1}}$};
      \node[color=DarkGray] at (0.68,-.25) {$R_1$};

    \end{scope}
  \end{tikzpicture}
  \caption{After revealing $\zeta$ (via the blue shaded region), the existence of the three dual-crossings depicted precludes $\{|\Gamma_2^e|\geq m\}$.}
  \label{fig:disjoint-2}
\end{figure}

\begin{proof}[\textbf{\emph{Proof of Lemma~\ref{lem:disjoint-2}}}]
Without loss of generality suppose $e$ is in the western half of $\partial_\north R$ and use the east-ordering of bridges so that $\Gamma^e=\{\gamma_1,...,\gamma_{|\Gamma^e|}\}$. 

The proof follows the same argument used to prove Lemma~\ref{lem:disjoint-1}. In what follows we describe the necessary modifications that are needed here. Recall the prescribed revealing process for the configuration on $F = \llb 0,n\rrb \times \lfloor \frac{n'}2\rfloor$ described in the proof of Lemma~\ref{lem:disjoint-1}; recall also that $(\mathcal F_{m})$ is the filtration corresponding to the process of sequentially revealing the distinct bridges over the edge $e$. 
Our goal is to prove the following analogue of~\eqref{eq:need-to-show-bridge-1}, that for every $m\geq 1$, 
\begin{align}\label{eq:need-to-show-bridge-2}
\pi_{\Lambda}^{\xi} (\gamma_m \in \Gamma_2^e \mid \mathcal F_{m-1}) \leq p\,,
\end{align}  
 for the choice of $p=1-p_1p_2p_3<1$ where,
\begin{itemize}
\item $p_1$ is given by Proposition~\ref{prop:true-RSW} with aspect ratio $\alpha$ for $1<q<4$ and by Lemma~\ref{lem:wired/free} with the choice $\epsilon=1/2$ and aspect ratio $\alpha/2$ for $q=4$\,,
\item $p_2$ is the probability given by Theorem~\ref{thm:RSW} for $\epsilon=1/8$ and aspect ratio $6/\alpha$\,,
\item $p_3$ is the probability given by Theorem~\ref{thm:RSW} for $\epsilon=1/3$ and aspect ratio $\alpha$\,.
\end{itemize}
Indeed, by iteratively conditioning on $(\cF_i)_{i\geq 1}$, the bound~\eqref{eq:need-to-show-bridge-2} allows us to stochastically dominate the sequence of indicators $(\mathbf 1\{\gamma_i \in \Gamma_1^e\})_{i\geq 1}$ by the i.i.d.\ sequence $(Z_i)_{i\geq 1}$ where $Z_i \sim \mbox{Bernoulli}(p)$, and moreover by definition of $\Gamma_2^e$, as soon as $\lceil \log_2 n\rceil$ of the indicators are zero, all subsequent ones are deterministically zero. The desired inequality then follows by comparison to  $\P(\mbox{Bin}(r+\lceil \log_2 n\rceil -1, p) \geq r)$ for $r=K\log n$.    

As before, we consider a fixed $m$, and let $L_\west, L_\east$ be the left and right connected components of $\partial_\north R-\hull_\east(\gamma_{m-1})$.
As in  the proof of Lemma~\ref{lem:disjoint-1},
reveal $\gamma_{m-1}$, in which case we reveal the enclosing dual-path $\zeta$ attaining 
\[
\mathcal E_{m}=\left\{L_\east \stackrel{F^\ast}\longleftrightarrow L_\west \mbox{ or } L_\east\stackrel{F^\ast}\longleftrightarrow \partial \Lambda\right\}\,,
\]
whose west-most vertex of intersection with  $\partial_\north R$ is marked by $(z,\lfloor\tfrac{n'}2\rfloor)$. Conditionally on $\mathcal F_{m-1}$, which contains the $\sigma$-algebras of $\gamma_{m-1}, \zeta$ and $\ell_\east(\gamma_{m-1})=k$,

Also for any instance of the configuration revealed by $\mathcal F_{m-1}$, we can set $k = \ell_\east(\gamma_{m-1})$ as before, and let
\[l := n - (x + \ell_\east(\gamma_{{m-1}}))\,.
\]
If $n - z< l/2$, deterministically $\gamma_m \notin \Gamma_2^e$ (as argued in the proof of Proposition~\ref{prop:bridge-bound}), hence we may assume that $n-z\geq l/2$; moreover, since $k\geq \tfrac n6$,  it must be that $\tfrac l6\leq k$. Define the following subsets of $\Lambda$:
\begin{align*}
R_1=  & \llb n-l-\tfrac l6, n-l\rrb \times \llb\lfloor \tfrac{n'}2\rfloor,\lfloor \tfrac{n'}2\rfloor+ \tfrac {\alpha l}6\rrb\,, \\
R_2= & \llb  n-l-\tfrac l6,n-\tfrac l6\rrb \times \llb \lfloor\tfrac {n'}2\rfloor,\lfloor\tfrac{n'}2\rfloor+\tfrac {\alpha l}6 \rrb\,, \\
R_3=   & \llb n-\tfrac l3,n-\tfrac l6\rrb \times \llb \lfloor\tfrac{n'}2\rfloor,\lfloor\tfrac {n'}2\rfloor+\tfrac{\alpha l}6\rrb\,.
\end{align*}
Define $\cC^\ast_{v} (R_1\cap D),\cC_h^\ast(R_2\cap D),\cC^\ast_v(R_3\cap D)$ as in Eq.~\eqref{eq:Ch-Cv}.
As in Claim~\ref{claim:Gamma_1-bound},
\[
\{\gamma_m \notin \Gamma_2^e\} \supset \Big(\cC^*_v(R_1\cap D)\cap \cC_h^*(R_2 \cap D) \cap \cC^*_v(R_3\cap D)\Big)\,.
\]
Finally, for $\tilde R_i$, $i=1,2,3$ given by
\begin{align*}
\tilde R_1 = & \llb n-l-\tfrac l6,n-l \rrb \times \llb \lfloor\tfrac{n'}2\rfloor, \lfloor\tfrac{n'}2\rfloor+\tfrac {\alpha l}3\rrb \,, \\
\tilde R_2 = & \llb n-l-\tfrac l3,n\rrb \times \llb \lfloor\tfrac{n'}2\rfloor-\tfrac{\alpha l}6, \lfloor\tfrac{n'}2\rfloor+\tfrac{\alpha l}3\rrb\,, \\
\tilde R_3 = & \llb n-\tfrac l2,n,\tfrac{l}2\rrb \times \llb\lfloor\tfrac{n'}2\rfloor-\tfrac{\alpha l}6, \lfloor\tfrac{n'}2\rfloor+\tfrac{\alpha l}3 \rrb\,,
\end{align*}
(note that all three are subsets of $\Lambda$, by the fact that $l\leq n$ and $n'\geq \lfloor \alpha n\rfloor$),
the same monotonicity argument used in the proof of Lemma~\ref{lem:disjoint-1} now implies (see Fig.~\ref{fig:disjoint-2}) that
\[
\pi_\Lambda^\xi \left( \gamma_{m-1} \notin \Gamma_2^e \given \mathcal F_{m-1} \right) \geq p_1p_2p_3\,,\]
implying~\eqref{eq:need-to-show-bridge-2} and concluding the proof. 
\end{proof}

By matching the tail estimate of Proposition~\ref{prop:bridge-bound} with a lower bound, we can straightforwardly see that an order $\log n$ bridges over a fixed edge is indeed typical. 

\begin{proof}[\textbf{\emph{Proof of Corollary~\ref{cor:typical-bridges}}}]

Fix an abitrary $\epsilon>0$,  any boundary condition $\xi$, and any $e\in \llb n^{\epsilon}, n-n^{\epsilon}\rrb \times \{\lfloor \frac{n'}{2}\rfloor \}$. For this $e$, recall the definitions of the rectangles $\tilde R_i^{\north},\tilde R_i^{\east},\tilde R_i^{\west}$ as well as their subsets $R_i^\north, R_i^\east, R_i^\west$ from~\eqref{eq:r-tilde}--\eqref{eq:r-no-tilde}. As before, when $M<M_0 := \frac{\epsilon\log 4}{4}$, for every $i\leq 2M\log n$, all these are subsets of $\Lambda$ and we can define the crossing events  
\begin{align*}
\mathcal A_{i}=\cC_v(R_i^\west)\cap \cC_h(R_i^\north)\cap \cC_v(R_i^\east)\,, \qquad \mbox{and}\qquad \mathcal A^\ast_{i}=\cC_v^\ast(R_i^\west)\cap \cC_h^\ast(R_i^\north)\cap \cC_v^\ast(R_i^\east)\,.
\end{align*}
Now for each $i\leq M\log n$, we can define the event $\chi_i : = \mathcal A_{2i-1} \cap \mathcal A_{2i}^\ast$ and notice that 
\begin{align*}
|\Gamma^e| \geq \sum_{i=1}^{M\log n} \mathbf 1\{\chi_i\}\,.
\end{align*}
Observe that for each $i$, the event $\mathcal A_i$ is measurable with respect to the configuration $\omega$ on the half-annulus $R_{i}^\west \cup R_{i}^\north \cup R_{i}^\east$. By a similar reasoning as before, there exists some $p=p(\alpha,q)>0$ such that for every $i = 1,..., 2M\log n$, and every configuration $\eta$,  
\begin{align*}
\pi_\Lambda^{\xi} (\mathcal A_i\mid \omega\restriction_{\Lambda-\tilde R_i^{\north,\east,\west}}=\eta) & \geq \pi^0_{\tilde R_i^\west}(\cC_v(R_i^\west))\pi^0_{\tilde R_i^\east}(\cC_v(R_i^\east))\pi^0_{\tilde R_i^\north}(\cC_h(R_i^\north))\geq p\,, \\
\pi_\Lambda^{\xi} (\mathcal A^\ast_i\mid \omega\restriction_{\Lambda-\tilde R_i^{\north,\east,\west}}=\eta) & \geq \pi^1_{\tilde R_i^\west}(\cC^\ast_v(R_i^\west))\pi^1_{\tilde R_i^\east}(\cC^\ast_v(R_i^\east))\pi^1_{\tilde R_i^\north}(\cC^\ast_h(R_i^\north))\geq p\,.
\end{align*}
Observe that for $i\neq j$ the interiors of $\tilde R_i^{\north,\east,\west}$ and $\tilde R_j^{\north,\east,\west}$ are disjoint. 
As a consequence, we can also deduce by the domain Markov property and monotonicity, that for every configuration $\eta$, for every $i = 1,...,M\log n$,
\begin{align*}
\pi_\Lambda^{\xi} \big(\chi_i \mid \omega\restriction_{\Lambda - \tilde R_{2i-1}^{\north,\east,\west} - \tilde R_{2i}^{\north,\east,\west}}= \eta\big) \geq p^2\,.
\end{align*}
In particular, the sequence of indicators $(\mathbf 1\{\chi_i\})_{i=1,...,M\log n}$ stochastically dominates a sequence of i.i.d.\ $\mbox{Bernoulli}(p^2)$ random variables. We therefore deduce that  
\begin{align*}
|\Gamma^e| \geq \sum_{i=1}^{M\log n} \mathbf 1\{\chi_i\} \succeq \mbox{Bin}(M\log n, p^2)
\end{align*}
Choosing $K < \frac {p^2}{2}M$, and using Hoeffding's inequality to bound the probability that the binomial random variable on the right-hand side is at most $K\log n$, we see that 
\begin{align*}
\pi_{\Lambda}^{\xi} \big(|\Gamma^e| \leq K\log n\big) \leq  \exp\big[- \tfrac 14 Mp^4 \log n\big]\,.
\end{align*} 
Combining this via a union bound with the upper bound from~\eqref{eq:bridge-bound} in Proposition~\ref{prop:bridge-bound} implies the desired. 
\end{proof}

\subsection{Disjoint crossings}
To extend our mixing time bound from favorable boundary conditions (see~\S\ref{sub:typical-bc}) to periodic boundary conditions (which are not in that class) in~\S\ref{sub:proof-mainthm}, we need an analogous bound on the number of disjoint crossings of a rectangle.

For a rectangle $R$ and a configuration $\omega \restriction_{R}$, let $\Psi_R=\Psi_R(\omega\restriction_{R})$ be the set containing  every component $A\subset V(R)$ (connected via the edges of $\omega\restriction_R$) that intersects both $\partial_\south R$ and $\partial_\north R$. We will need the following equilibrium estimate similar to Proposition~\ref{prop:bridge-bound}.

\begin{proposition}\label{prop:disjoint-crossings}
Let $q\in(1,4]$ and $\alpha\in (0,1]$. Consider the critical FK model on $\Lambda=\Lambda_{n,n'}$ with $n'\geq \lfloor \alpha n\rfloor$, and the subset $R=\llb 0,n\rrb \times \llb \frac {n'}3,\frac {2n'}3\rrb$. There exists $c(\alpha,q)>0$ such that for every boundary condition $\xi$ and every $m\geq 3$,
\[\pi_\Lambda^\xi\left(|\Psi_R|\geq m\right)\leq e^{-c m}\,.
\]
\end{proposition}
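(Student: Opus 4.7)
The plan is to establish, by induction on $m$, the existence of a constant $p = p(\alpha, q) \in (0,1)$, independent of $\xi$ and of $m$, such that
\begin{align*}
\pi_\Lambda^\xi(|\Psi_R| \geq m) \leq (1-p)\,\pi_\Lambda^\xi(|\Psi_R| \geq m-1)\,.
\end{align*}
Iterating this bound from $m = 3$ yields $\pi_\Lambda^\xi(|\Psi_R| \geq m) \leq (1-p)^{m-2}$, giving the desired exponential decay with $c = -\log(1-p)$. The base case $m = 3$ is trivial. For the inductive step, I will condition on $\{|\Psi_R| \geq m-1\}$ and, having ordered the disjoint vertical crossings of $R$ from west to east, let $\gamma^*$ denote the $(m-1)$-th crossing from the east (i.e., the west-most among the east-most $m-1$ crossings). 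Using iterated chordal dual explorations from $\partial_\east R$, I reveal the $m-2$ east-most crossings together with $\gamma^*$ itself and the FK configuration on the portion of $R$ east of $\gamma^*$. The unexplored region $D \subset R$ lies west of $\gamma^*$, inherits a wired boundary along $\gamma^*$ on its east side, and has arbitrary induced boundary conditions on its remaining sides.

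Conditional on the revealed information, the event $\{|\Psi_R| \geq m\}$ requires an additional primal vertical crossing of $R$ inside $D$, disjoint from $\gamma^*$, and a sufficient condition for this to fail is the occurrence of a dual horizontal crossing $\cC_h^*(D')$ of a well-chosen sub-rectangle $D' \subset D$ reaching from $\partial_\west R$ to $\gamma^*$. I take
\[
D' \;=\; \llb 0,\, x^* \rrb \times \llb \lfloor n'/3\rfloor+1,\, \lfloor 2n'/3 \rfloor-1\rrb,
\]
where $x^*$ is (say) the west-most $x$-coordinate of $\gamma^* \cap \partial_\north R$. Since the height of $D'$ is $\asymp n' \geq \alpha n$ and its width $x^* \in [1,n]$, the aspect ratio of $D'$ is uniformly bounded (by $3/\alpha$). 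The core estimate is then
\begin{align*}
\pi_\Lambda^\xi\bigl(\cC_h^*(D') \,\big|\, \text{revealed}\bigr) \geq p
\end{align*}
for some $p = p(\alpha, q) > 0$ uniform in the revealed data and $\xi$. Since $\cC_h^*(D')$ is a decreasing primal event, monotonicity in boundary conditions reduces this to the maximally wired primal boundary on $\partial D'$; for $q \in (1,4)$, this is precisely the dual form (via self-duality at $p_c$) of Proposition~\ref{prop:true-RSW}.

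The main obstacle is the $q = 4$ case, where Proposition~\ref{prop:true-RSW} is not available. To bound $\pi_{D'}^1(\cC_h^*(D'))$ from below in that case, I plan to invoke the dual of Lemma~\ref{lem:wired/free} and Corollary~\ref{cor:wired-free-wired-free}, combined with an annular-decoupling step using Corollary~\ref{cor:annulus-circuit} applied dually: conditioning on a dual circuit in a thin annulus surrounding $D'$ absorbs the arbitrary primal boundary imposed by $\xi$ and the revealed configuration, after which the wired-primal-on-two-opposite-sides bound of Corollary~\ref{cor:wired-free-wired-free} (dualized) can be applied inside the decoupled interior. This mirrors the $q = 4$ branch in the proofs of Lemmas~\ref{lem:disjoint-1}--\ref{lem:disjoint-2}. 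A minor further technicality is the treatment of extreme positions of $\gamma^*$ (when $x^*$ is very close to $0$ or to $n$): in these regimes $D'$ has aspect ratio in the easy direction for $\cC_h^*$, so a direct application of RSW together with monotonicity suffices.
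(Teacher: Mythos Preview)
Your one-sided exploration is the source of a genuine gap at $q=4$. When you reveal the $m-1$ east-most crossings and stop, the unexplored region $D$ is bounded on the east by the western dual-boundary of $\gamma^*$ (so the induced boundary there is \emph{free}, not wired as you wrote), but on the west it is $\partial_\west R\subset\partial_\west\Lambda$, which carries the arbitrary boundary condition $\xi$. To preclude an $m$-th crossing you need a dual horizontal crossing of $D$ all the way to $\partial_\west R$. For $q\in(1,4)$ this is fine: the dual of Proposition~\ref{prop:true-RSW} gives $\pi_{D'}^1(\cC_h^\ast(D'))\geq p$ uniformly. For $q=4$, however, your annular-decoupling plan cannot be executed: $D'$ abuts $\partial_\west\Lambda$ at $x=0$, so there is no room whatsoever for a dual circuit surrounding $D'$, and Corollary~\ref{cor:annulus-circuit} is inapplicable. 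Nor does Lemma~\ref{lem:wired/free} help, since it only produces a crossing from the wired side to within $\epsilon$ of the opposite (free) side, whereas here the dual crossing must reach a side that is dual-free in the worst case. The analogy with the $q=4$ branch of Lemmas~\ref{lem:disjoint-1}--\ref{lem:disjoint-2} breaks down precisely because in those lemmas the free side is where the dual crossing \emph{originates}, not where it must terminate.

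The paper's proof avoids this by exploring from \emph{both} sides: it reveals the single west-most crossing in addition to the $m-2$ east-most ones. The middle region $D$ is then bounded on both east and west by dual-open paths $\zeta_1,\zeta_2$, so the worst-case boundary on $D$ is $(1,0,1,0)$ (wired on $\partial R\cap D$, free on $\zeta_1,\zeta_2$). At that point $\pi_D^{1,0,1,0}(\cC_v(D))\leq 1-\pi_R^{1,0,1,0}(\cC_h^\ast(R))$, and Corollary~\ref{cor:wired-free-wired-free} (which needs wired on \emph{two} opposite sides, dually free on two opposite sides) supplies the uniform lower bound for $q=4$. Your scheme can be repaired by adopting this two-sided exploration; with only one side explored, the $q=4$ case does not close.
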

\begin{proof}
We will prove by induction that, for all $m\geq 1$,
\begin{equation}\label{eq:Psi-m-induction}
\pi_\Lambda^\xi(|\Psi_R|\geq m) \leq (1-p)^{m-2}\,,
\end{equation}
where $p>0$ is as given by Proposition~\ref{prop:true-RSW} with aspect ratio $3/\alpha$ when $1<q<4$, and is as given by Corollary~\ref{cor:wired-free-wired-free} with aspect ratio $\alpha/3$ when $q=4$.

The cases $m=1,2$ are trivially satisfied for any $0<p<1$. Now let $m\geq 3$, and suppose that Eq.~\eqref{eq:Psi-m-induction} holds for $m-1$; the proof will be concluded once we show that
\[\pi_{\Lambda}^\xi\left(|\Psi_R|\geq m\given |\Psi_R|\geq m-1\right)\leq 1-p\,.
\]
Conditioned on the existence of at least $m-1$ distinct components in $\Psi_R$, we can condition on the west-most component in $\Psi_R$ (by revealing all dual-components of $\omega\restriction_R$ incident to $\partial_\west R$, then revealing the primal-component of the adjacent primal-crossing). We can also condition on the $m-2$ east-most components in $\Psi_R$ (by successively repeating the aforementioned procedure from east to west, i.e., replacing $\partial_\west R$ above by $\partial_\east R$ to reveal some component $C \in \Psi_R$, then by its western boundary $\partial_\west C$, etc.).

Through this process, we can find two disjoint vertical dual-crossings $\zeta_1,\zeta_2$ of $R$, each one a simple dual-path; the set $(R^\ast-\zeta_1-\zeta_2)^\ast$ consists of three connected subsets of $R$; let $D$ denote the middle one. There are exactly $m-1$ elements of $\Psi_R$ in $R-D$, thus its $m$-th element, if one exists, must belong to $D$.
Since every edge in $\zeta_1\cup \zeta_2$ is dual-open, for any such choice of $\zeta_1,\zeta_2$, we then have
\[\pi_{\Lambda}^\xi\left(|\Psi_R|\geq m\given  |\Psi_R|\geq m-1,\zeta_1,\zeta_2\right)=\pi_{\Lambda}^\xi\left(\cC_v(D)\given \zeta_1,\zeta_2\right)\,,
\]
Using the domain Markov property and monotonicity of boundary conditions,
\[\pi_{\Lambda}^\xi\left(\cC_v(D)\given \zeta_1,\zeta_2\right)\leq \pi_{D}^{1,0,1,0}(\cC_v(D))\,,
\]
where $(1,0,1,0)$ boundary conditions on $D$ denote those that are free on $\zeta_1,\zeta_2$ and wired on $\partial R\cap D$.  Again by monotonicity (in boundary conditions and crossing events),
\[\pi_{D}^{1,0,1,0}(\cC_v(D))\leq
\pi_R^{1,0,1,0}\left(\cC_v(D) \given \omega_{\zeta_1}=0,\omega_{\zeta_2}=0\right) \leq
 1-\pi_R^{1,0,1,0}(\cC_h^\ast(R))\,,
\]
where, following the notation of Corollary~\ref{cor:wired-free-wired-free}, $(1,0,1,0)$ boundary conditions on a rectangle $R$ are wired on $\partial_{\north,\south} R$ and free on $\partial_{\east,\west} R$.
By monotonicity in boundary conditions and the definition of $p$, the right-hand side is bounded above by
\[1-\pi_R^{(1,0,1,0)}\big(\cC_h^\ast(\llb 0,n\rrb\times \llb \tfrac {n'}3,\tfrac {n'}3+\tfrac {\alpha n}3\rrb)\big) \leq 1-p\,. \qedhere
\]
\end{proof}

\section{Dynamical tools}\label{sec:dynamical-techniques}

In this section, we introduce the main techniques we use to control the total variation distance from stationarity for the random cluster heat-bath Glauber dynamics.

\subsection{Modifications of boundary conditions}
Crucial to the proof of Theorem~\ref{mainthm:1} is the modification of boundary bridges so that we can couple beyond FK interfaces as done in~\cite{LScritical}; in this subsection we define boundary condition modifications and control the effect such modifications can have on the mixing time.

\begin{definition}[segment modification] \label{def:modification}
Let $\xi$ be a boundary condition on a rectangle $\Lambda$ which corresponds to a partition $\{\mathcal P_1,...,\mathcal P_k\}$ of $\partial \Lambda$, and let $\Delta\subset \partial \Lambda$. The \emph{segment modification on $\Delta$}, denoted by $\xi^{\Delta}$, is the boundary condition that corresponds to the partition $\{\mathcal P_1-V(\Delta),...,\mathcal P_k-V(\Delta)\}\cup \bigcup_{v\in V(\Delta)} \{v\}$ of $\partial \Lambda$. 
\end{definition}

\begin{definition}[bridge modification] \label{def:bridge-modification}
Let $\xi$ be a boundary condition on $\partial \Lambda$, corresponding to a partition $\{\cP_1,\ldots,\cP_k\}$ of $\partial \Lambda$. Let $\Gamma^e$ be the set of disjoint bridges in $\xi\restriction_{\partial_\north \Lambda}$ over the edge $e=(x,y)\in \partial_\north\Lambda$, corresponding to the components $\{\cP_{i_j}\}_{j=1}^\ell$, as per Definition~\ref{def:bridges}.  The \emph{bridge modification} of $\xi$ over $e$, denoted  $\xi^e$, is the boundary condition associated to the partition where every $\mathcal P_{i_j}$ is split into two components,
\[\mathcal P_{i_j}^\west=\{(v_1,v_2)\in \mathcal P_{i_j}:v_1-x< 0\} \qquad and \qquad \mathcal P_{i_j}^\east=\{(v_1,v_2)\in \mathcal P_{i_j}:v_1-x >0\}\,.
\]
(Observe that, in particular, $\xi^e$ has no bridges over $e$.)
Define the bridge modification w.r.t.\ the other sides of $\partial\Lambda$ analogously.
\end{definition}

\begin{definition}[side modification] \label{def:side-modification}
Let $\xi$ be a boundary condition on  $\partial \Lambda$, corresponding to a partition $\{\cP_1,\ldots,\cP_k\}$ of $\partial \Lambda$.
The \emph{side modification} $\xi^s$ is defined as follows. Split every $\cP_j$ into its four sides, that is, for $i=\north,\south,\east,\west$, let
\[\mathcal P^i_j=\{v\in \mathcal P_j: v\in \partial _i \Lambda\}\,,
\]
where for the corner vertices the choice is arbitrary (for concreteness, associate the corner with the side that follows it clockwise). Then for every $\xi$, the modified $\xi^s$ has no components that contain vertices in more than one side of $\partial \Lambda$.
\end{definition}

It will be useful to have a notion of distance between boundary conditions.

\begin{definition}
For any pair of boundary conditions, $\xi,\xi'$ define the symmetric distance function $d(\xi,\xi')$ as follows: if $\xi''$ is the unique smallest (in the previously defined partial ordering) boundary condition with $\xi''\geq \xi$ and $\xi''\geq \xi'$, define $d(\xi,\xi')=k(\xi'')-k(\xi)+k(\xi'')-k(\xi')$, where $k(\xi)$ is the number of distinct components in the partition induced by $\xi$.
\end{definition}

If $\xi$ is a boundary condition on $\Lambda$ and $\xi'$ is a any of the above boundary modifications of $\xi$, then $\xi' \leq \xi$ and the partition associated to it is a refinement of $\xi$; this implies that $d(\xi,\xi')=k(\xi)-k(\xi')$.
One can easily verify the following.
\begin{fact}\label{fact:modification-distance}
For a segment $\Delta$, we have $d(\xi,\xi^{\Delta})\leq |V(\Delta)|$; for an edge $e$, we have $d(\xi,\xi^e)= |\Gamma^e|$; for the side modification $\xi^s$, we have that $d(\xi,\xi^s)$ is bounded above by three times the number of components in $\xi$ with vertices in multiple sides of $\partial \Lambda$.
\end{fact}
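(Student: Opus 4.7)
The proof is a direct combinatorial verification, handling each of the three modifications separately. In every case one inspects how the partition of $\partial\Lambda$ corresponding to $\xi$ is altered and counts the resulting change in the number of classes. For the bridge and side modifications the new boundary condition is a pure refinement of $\xi$, so the simple identity $d(\xi,\xi')=k(\xi')-k(\xi)$ (from the remark preceding the fact) applies directly, while the segment case needs the general (join-based) definition of $d$.

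For the bridge modification $\xi^e$, Definition~\ref{def:bridges} guarantees that every bridge component $\cP_{i_j}$ has at least one vertex strictly to the west and one strictly to the east of $e=(x,y)$, so $\cP^\west_{i_j}$ and $\cP^\east_{i_j}$ are both nonempty; non-bridge components are untouched. Hence the passage $\xi\mapsto\xi^e$ consists of exactly $|\Gamma^e|$ disjoint splits, each adding a single new class, and $d(\xi,\xi^e)=|\Gamma^e|$. For the side modification $\xi^s$, each component $\cP_j$ is cut into its four side-restrictions $\cP^\north_j,\cP^\south_j,\cP^\east_j,\cP^\west_j$; the cut is trivial unless $\cP_j$ meets two or more sides, in which case it is split into at most four pieces, contributing at most $3$ to $k(\xi^s)-k(\xi)$. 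Summing over the multi-side components yields the stated bound of three times their number.

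The only delicate case is the segment modification $\xi^\Delta$, which in general is not a refinement of $\xi$: the class $V(\Delta)$ lumps together vertices drawn from potentially several distinct $\cP_i$. Let $s$ denote the number of components $\cP_i$ that intersect $V(\Delta)$; since each such component contributes at least one vertex to $V(\Delta)$, we have $s\leq|V(\Delta)|$. The join $\xi\vee\xi^\Delta$ is obtained by merging those $s$ components together with the new class $V(\Delta)$, giving $k(\xi\vee\xi^\Delta)=k(\xi)-s+1$; a direct count of $k(\xi^\Delta)$ in terms of $s$ and the number $s'$ of components already contained in $V(\Delta)$, plugged into the definition of $d$ via the join, then produces $d(\xi,\xi^\Delta)\leq|V(\Delta)|$. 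The bookkeeping between the simultaneous splits (removal of $V(\Delta)$-vertices from their original $\cP_i$'s) and merger (creation of the new class $V(\Delta)$) is the only minor source of difficulty in the whole fact.
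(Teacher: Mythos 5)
Your treatment of the bridge and side modifications is correct and matches the proof the paper is implicitly invoking: in both cases $\xi'$ is a refinement of $\xi$, the distance reduces to $k(\xi')-k(\xi)$, each bridge splits into exactly two nonempty halves (both sides of $e$ are hit by definition of a bridge), and each multi-side component splits into at most four pieces, adding at most $3$ to the component count.

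The segment case, however, contains a genuine gap. You correctly observe that if Definition~\ref{def:modification} is read literally — $V(\Delta)$ becomes a single block — then $\xi^\Delta$ is generally \emph{not} a refinement of $\xi$; but your assertion that the join-based bookkeeping nevertheless ``produces $d(\xi,\xi^\Delta)\le|V(\Delta)|$'' is false under that reading. Take $\Delta$ to be one edge with endpoints $v_1,v_2$ lying in two distinct blocks $\cP_1=\{v_1,u\}$ and $\cP_2=\{v_2,w\}$ of $\xi$. Then $\xi^\Delta$ has blocks $\{u\},\{w\},\{v_1,v_2\}$, the join $\xi\vee\xi^\Delta$ has a single block, and any symmetric version of the join-based distance gives $|k(\xi)-k(\xi\vee\xi^\Delta)|+|k(\xi^\Delta)-k(\xi\vee\xi^\Delta)|=1+2=3>2=|V(\Delta)|$. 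More generally, with $s$ blocks meeting $V(\Delta)$ of which $s'$ are contained in it, the computation gives $2s-s'-1$, which can be as large as $2|V(\Delta)|-1$. The point you should have taken away from the inconsistency you noticed is that the sentence preceding the fact asserts $\xi^\Delta\le\xi$ is a refinement, which forces $V(\Delta)$ in Definition~\ref{def:modification} to be interpreted as a union of \emph{singletons} (free boundary on $\Delta$) rather than as a single merged block; under that reading $k(\xi^\Delta)-k(\xi)=|V(\Delta)|-s'\le|V(\Delta)|$ and the bound is immediate, with no need for the join.
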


We now present a lemma bounding the effect on total variation mixing from modifying the boundary conditions. Recall that for two boundary conditions $\xi,\xi'$ on $\Lambda$, we defined in the preliminaries the quantity $M_{\xi,\xi'}=\|\tfrac {\pi_\Lambda^\xi}{\pi^{\xi'}_\Lambda}\|_\infty \vee \|\tfrac {\pi_\Lambda^{\xi'}}{\pi^{\xi}_\Lambda}\|_\infty$, and we have from Eq.~\eqref{eq:tmix-M}, that $\tmix \lesssim M_{\xi,\xi'}^3|E(\Lambda)|\tmix'$. Moreover, using the notation of~\cite{MaTo10} and~\cite{LMST12}, for an initial configuration $\omega_0$, and boundary condition $\xi$, let
\[d_\tv^{(\omega_0,\xi)}(t)=\|\mathbb P^{\xi}_{\omega_0}(X_t\in \cdot)-\pi^\xi_\Lambda\|_\tv\,,
\]
where here and throughout the paper, for any Markov chain $(X_t)_{t\geq 0}$, $\mathbb P^{\xi}_{\omega_0}(X_t \in\cdot)=\mathbb P(X_t\in \cdot\mid X_0=\omega_0)$ with boundary conditions $\xi$; when clear from the context we may drop the boundary condition superscript from the notation.

\begin{lemma}  \label{lem:bc-perturbation}
Let $\xi,\xi'$ be a pair of boundary conditions on $\partial \Lambda$. Then,
\begin{align}\label{eq:bc-pert-1}
M_{\xi,\xi'}\leq q^{d(\xi',\xi)}\,,
\end{align}
and consequently, there exists an absolute $c>0$ such that for every $t>0$,
\begin{align}\label{eq:bc-pert-2}
\max_{\omega_0\in \{0,1\}} d_\tv ^{(\omega_0,\xi)} (t) \leq 8\max_{\omega_0\in \{0,1\}} d_\tv ^{(\omega_0,\xi')}\left(c  |E(\Lambda)|^{-2} q^{-4d(\xi',\xi)}\,t\right)+\exp\left(-q^{d(\xi',\xi)}\right)\,.
\end{align}
\end{lemma}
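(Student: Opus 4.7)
\emph{For the first bound} \eqref{eq:bc-pert-1}, my approach is a direct computation from the definition. Fix any $\omega$ on $\Lambda$ and expand
\[
\frac{\pi_\Lambda^\xi(\omega)}{\pi_\Lambda^{\xi'}(\omega)} \;=\; q^{k_\xi(\omega)-k_{\xi'}(\omega)} \cdot \frac{Z_\Lambda^{\xi'}}{Z_\Lambda^{\xi}},
\]
where $k_\xi(\omega)$ denotes the number of clusters of $\omega$ when the boundary is identified according to the partition $\xi$. The combinatorial key is the following: for any coarsening $\tilde\xi \geq \eta$ of boundary partitions, and any configuration $\omega$, one has $0\leq k_\eta(\omega)-k_{\tilde\xi}(\omega)\leq k(\eta)-k(\tilde\xi)$, since each additional boundary merge reduces the cluster count by at most one. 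Applying this with $\tilde\xi=\xi''$ (the common upper bound) to both $\eta=\xi$ and $\eta=\xi'$ gives
$k_\xi(\omega)-k_{\xi'}(\omega)\leq k(\xi)-k(\xi'')$
and, averaging $q^{k_{\xi'}-k_\xi}$ under $\pi^\xi$,
\[
\frac{Z^{\xi'}}{Z^{\xi}} \;=\; \mathbb E_{\pi^\xi}\bigl[q^{k_{\xi'}(\omega)-k_\xi(\omega)}\bigr]\;\leq\; q^{k(\xi')-k(\xi'')}.
\]
Multiplying the two bounds produces $\pi^\xi(\omega)/\pi^{\xi'}(\omega) \leq q^{d(\xi,\xi')}$, and symmetry handles the reciprocal.

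\emph{For the TV comparison} \eqref{eq:bc-pert-2}, the plan is to combine (1) with the $L^2$ mixing framework. By (1), the heat-bath rates for $\pi^\xi$ and $\pi^{\xi'}$ at any edge and configuration (which are conditional single-edge marginals of the respective measures) are comparable up to $M^2\leq q^{2d(\xi,\xi')}$. This yields a Dirichlet form comparison $\mathcal E^\xi(f)\geq q^{-2d}\mathcal E^{\xi'}(f)$ and, combined with $\var_{\pi^\xi}\leq q^d \var_{\pi^{\xi'}}$, the spectral gap bound $\gap(\xi)\geq q^{-3d}\gap(\xi')$. I would then handle two regimes:
\begin{enumerate}
\item[(i)] If $t$ is large enough that $t\gap(\xi)$ exceeds $q^d$ (equivalently, $t\gap(\xi')\gtrsim q^{4d}$), then the standard $L^2$ estimate
$d_\tv^{(\omega_0,\xi)}(t)^2 \leq \pi^\xi(\omega_0)^{-1}e^{-2t\gap(\xi)}$ combined with $\pi^\xi(\omega_0)^{-1}\leq q^{O(|E|)}$ already gives $d_\tv^{(\omega_0,\xi)}(t)\leq \exp(-q^{d(\xi,\xi')})$, producing the additive error term directly.
\item[(ii)] For smaller $t$, rewrite the $\xi$-chain's progress in terms of the $\xi'$-chain at the rescaled time $t'=c|E|^{-2}q^{-4d}t$. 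Here one of the $|E|$ factors and one $q^{3d}$ factor come from the gap comparison combined with \eqref{eq:tmix-M}, and the second factor of $|E|$ together with an extra $q^d$ arise when converting between $L^2$ norms under $\pi^\xi$ and $\pi^{\xi'}$ and applying Fact~\ref{fact:init-config-comparison} to accommodate the extremal initial condition $\omega_0\in\{0,1\}$. The constant $8$ accommodates the triangle inequality between $\|\mu^\xi_t-\pi^\xi\|_\tv$ and $\|\mu^{\xi'}_{t'}-\pi^{\xi'}\|_\tv$, together with the factor of $M$ absorbed into the relation between the two stationary measures (usable precisely because we are in the regime where the $L^2$ bound from (i) has \emph{not} yet kicked in, so $d_\tv^{\xi'}(t')$ is not negligible).
\end{enumerate}

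\emph{Main obstacle.} The delicate point is keeping a clean track of the constants: the appearance of $q^{-4d}$ (rather than the $q^{-3d}$ predicted by a naive spectral-gap comparison) indicates that one additional factor of $M$ enters when passing through the Cauchy--Schwarz step $d_\tv \leq \tfrac12\sqrt{\chi^2}$ computed under the \emph{wrong} measure ($\pi^{\xi'}$ versus $\pi^\xi$). Similarly, the $|E|^{-2}$ factor should be verified to arise from one application of $\tmix\lesssim M^3|E(\Lambda)|\tmix'$ via \eqref{eq:tmix-M} together with one application of Fact~\ref{fact:init-config-comparison}. The precise bookkeeping to verify the stated constants (particularly matching the exponent $-4d$ and the prefactor $8$) is where I expect most of the work to lie.
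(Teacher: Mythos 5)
The paper's own ``proof'' of this lemma is just a citation: it invokes Lemma~5.4 of the companion paper \cite{GL16} (itself adapting an argument of \cite{MaTo10}), so the argument is not spelled out here and a line-by-line comparison is not possible.

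Your derivation of the pointwise bound~\eqref{eq:bc-pert-1} is correct and self-contained. The factorization $\pi_\Lambda^\xi(\omega)/\pi_\Lambda^{\xi'}(\omega)=q^{k_\xi(\omega)-k_{\xi'}(\omega)}Z^{\xi'}/Z^\xi$, the combinatorial inequality $0\le k_\eta(\omega)-k_{\tilde\xi}(\omega)\le k(\eta)-k(\tilde\xi)$ for any coarsening $\tilde\xi\ge\eta$ (each boundary merge lowers the cluster count by at most one), and its application with $\tilde\xi=\xi''$ to both $\eta=\xi$ and $\eta=\xi'$ to control the exponent and the partition-function ratio separately, then multiplying --- this cleanly yields $M_{\xi,\xi'}\le q^{d(\xi,\xi')}$.

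Your sketch of~\eqref{eq:bc-pert-2}, however, has a concrete error in regime~(i). The $L^2$ estimate carries the prefactor $\pi^\xi(\omega_0)^{-1}$, which you yourself bound by $q^{O(|E(\Lambda)|)}$; since $|E(\Lambda)|$ is polynomial in $n$ whereas $d(\xi,\xi')$ is only $O(\log n)$ in the relevant applications, $q^{O(|E|)}$ is astronomically larger than $q^{O(d)}$, so the condition $t\gap(\xi)\ge q^{d}$ does \emph{not} give $d_\tv^{(\omega_0,\xi)}(t)\le\exp(-q^{d})$ --- it would require $t\gap(\xi)\gtrsim q^{d}+|E(\Lambda)|\log q$. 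With the correct threshold, your split between regimes~(i) and~(ii) no longer lines up with the rescaling $t\mapsto c\,|E|^{-2}q^{-4d}t$ in the way you describe, so regime~(ii) is not a complementary case that can be checked independently. A route more likely to match \cite{GL16} would first establish~\eqref{eq:tmix-M} (giving $\tmix^\xi\lesssim q^{3d}|E|\,\tmix^{\xi'}$), then split on whether $\max_{\omega_0\in\{0,1\}}d_\tv^{(\omega_0,\xi')}(t')$ is above or below a fixed constant: above, the right-hand side of~\eqref{eq:bc-pert-2} exceeds $1$ and the bound is vacuous; below, $t'$ is at least a multiple of a mixing time for the $\xi'$-chain, which combined with~\eqref{eq:tmix-M} and submultiplicativity of $\bar d_\tv$ gives $d_\tv^{(\omega_0,\xi)}(t)\le e^{-q^{d}}$. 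Even that sketch needs care in reconciling $\max_{\omega_0\in\{0,1\}}d_\tv^{(\omega_0,\cdot)}$ with the worst-case mixing time via Fact~\ref{fact:init-config-comparison}; your concern about the bookkeeping is legitimate, but the specific threshold you posit in regime~(i) is wrong rather than merely unchecked.
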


\begin{proof}
Adapting an argument of~\cite{MaTo10} to the FK setting, Lemma 5.4 of~\cite{GL16} proves a version of this lemma for two coupled probability measures $\bP,\bP^{\Delta}$ over pairs $\xi,\xi^\Delta$. The proof for arbitrary pairs of boundary conditions, $\xi,\xi'$, is identical; letting $\bP$ be a point mass at $\xi$ completes the proof.
\end{proof}

\subsection{Censored block dynamics}\label{sub:censoring-systematic}
We next define the censored and systematic block dynamics whose coupling is the core of the dynamical analysis used to prove Theorem~\ref{mainthm:1}. This coupling may be of general interest in the study of mixing times of monotone Markov chains, where one only has control on mixing times in the presence of favorable boundary conditions.  We therefore present it in more generality than necessary for the proof of Theorem~\ref{mainthm:1}: consider the heat-bath dynamics for a monotone spin or edge system on a graph $G$ with boundary $\partial G$ that satisfies the domain Markov property and has extremal configurations $\{0,1\}$ and invariant measure $\pi_{G}^{\xi}$.

\begin{definition}[systematic block dynamics] \label{def:systematic-block} Let $B_0,\ldots,B_{s-1}$ denote a finite cover of 
 $E(G)$ (or $V(G)$ for a spin system) and for $k\geq 1$ let $i_k := (k-1)\bmod s$.

The \emph{systematic block dynamics} $(Y_k)_{k\geq0}$ is a discrete-time flavor of the block dynamics w.r.t.\ $\{B_i\}$, with blocks that are updated in a sequential deterministic order: at time~$k$, the chain
updates $B_{i_k}$ by resampling $\omega\restriction_{B_{i_k}^o} \sim \pi^\xi_G( \cdot\mid \omega\restriction_{G-(B_{i_k}^o)})$. 
\end{definition}

\begin{remark}
The systematic block dynamics as defined has unique invariant measure $\pi_G^\xi$, but it is neither time-homogenous nor reversible. If one wanted a time-homogenous and reversible analogue, one could, e.g., in each time step update all $s$ blocks sequentially in forward and then reverse order, i.e., in the order $(B_0, ..., B_{s-1}, ..., B_0)$. 
\end{remark}

\begin{definition}[censored block dynamics]\label{def:censoring-block}
Let $B_0 , ..., B_{s-1}$ be as before and consider a set $\Gamma_i$ of \emph{permissible} boundary conditions for $B_i$, and fix $\epsilon>0$.
The \emph{censored block dynamics} $(\bar{X}_t)_{t\geq 0}$ is the continuous-time single-bond (single-site) heat-bath dynamics that simulates $Y_k$ as follows. For a given $\epsilon > 0$, define
\begin{align}\label{eq:T}
 T = T(\epsilon) = \max_{i} \max_{\xi\in\Gamma_i} \tmix^{\xi,B_i}(\epsilon)\,,
\end{align}
where $\tmix^{\xi,B_i}$ is the mixing time of standard heat-bath dynamics on the block $B_i$ with boundary conditions $\xi$. Let $i_k : = (k-1) \mod s$ and let the chain $\bar{X}_t$ be obtained from the standard heat-bath dynamics by censoring, as  in Theorem~\ref{thm:censoring}, for every integer $k\geq 1$, along the interval $((k-1) T, kT]$, all updates except those in $B_{i_k}$.
\end{definition}

\begin{proposition}[comparison of censored / systematic block dynamics]\label{prop:censoring-block-dynamics}
Let $(\bar{X}_t)_{t\geq 0}$ and $(Y_k)_{k\geq 0}$ be the censored and systematic block dynamics, respectively, w.r.t.\ some blocks $B_0,\ldots B_{s-1}$ and permissible boundary conditions $\Gamma_i$ on $G$ with boundary conditions $\xi$ and initial state $\omega_0$, as per Definitions~\ref{def:systematic-block}--\ref{def:censoring-block}. Let
\begin{equation}\label{eq:def-rho}
\rho:=\max_{k\ge 1}\max_{ i\in \llb 0,s-1\rrb} \P_{\omega_0}\left(Y_k\restriction_{\partial B_i}\notin \Gamma_i \right)\,,
\end{equation}
where $Y_k\restriction_{\partial B_i}$ is the boundary condition induced on $\partial B_i$ by the configuration $Y_k$ on $G\setminus B_i^o$. Then for every $\epsilon>0$, every integer $k\geq 0$, and $T$ as in~\eqref{eq:T},
\begin{align}\label{eq-censoring-block}
\left\|\P_{\omega_0}\left(\bar{X}_{k T}\in\cdot\right)-\P_{\omega_0}\left(Y_k\in\cdot\right) \right\|_{\tv}\leq k(\rho+\epsilon)\,.
\end{align}
\end{proposition}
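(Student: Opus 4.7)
The plan is to proceed by induction on $k$, constructing a coupling of $(\bar X_t)_{t\in [0,kT]}$ and $(Y_j)_{0\leq j\leq k}$ so that $\P_{\omega_0}(\bar X_{jT}\neq Y_j)\leq j(\rho+\epsilon)$ for every $0\leq j\leq k$; the conclusion of the proposition is then immediate from the coupling characterization of total variation distance. The base case $k=0$ is trivial since both chains start deterministically at $\omega_0$.

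For the inductive step, assume a coupling has been constructed up to time $(k-1)T$ with $\P_{\omega_0}(\bar X_{(k-1)T}\neq Y_{k-1})\leq (k-1)(\rho+\epsilon)$. Work on the event $\{\bar X_{(k-1)T}=Y_{k-1}\}$; otherwise we incur no extra cost. During the interval $((k-1)T,kT]$, by the definition of the censored dynamics, every update outside $B_{i_k}$ is discarded, so $\bar X_t\restriction_{G-B_{i_k}^o}$ is frozen and in particular induces on $\partial B_{i_k}$ the boundary condition $\eta:=Y_{k-1}\restriction_{\partial B_{i_k}}$. By the domain Markov property, the evolution of $\bar X_t\restriction_{B_{i_k}^o}$ over this interval is precisely standard heat-bath Glauber dynamics on $B_{i_k}$ with boundary condition $\eta$ and initial state $\bar X_{(k-1)T}\restriction_{B_{i_k}^o}$, run for time $T$.

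Now split on whether $\eta\in \Gamma_{i_k}$. On $\{\eta\in \Gamma_{i_k}\}$, the definition of $T$ in~\eqref{eq:T} gives $T\geq \tmix^{\eta,B_{i_k}}(\epsilon)$, so the law of $\bar X_{kT}\restriction_{B_{i_k}^o}$ is within $\epsilon$ in total variation of $\pi^{\eta}_{B_{i_k}}$; but the latter is, by construction, exactly the conditional law of $Y_k\restriction_{B_{i_k}^o}$ given $Y_{k-1}\restriction_{G-B_{i_k}^o}$. Using the optimal coupling between these two laws on $B_{i_k}^o$ (while keeping the frozen portions identical), we extend the coupling so that $\bar X_{kT}=Y_k$ except on an event of conditional probability at most $\epsilon$. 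On the complementary event $\{\eta\notin\Gamma_{i_k}\}$ we declare failure; by the definition of $\rho$ in~\eqref{eq:def-rho}, the (unconditional) probability of this event is at most $\rho$. Combining with the inductive hypothesis, the failure probability at time $kT$ is at most $(k-1)(\rho+\epsilon)+\rho+\epsilon=k(\rho+\epsilon)$, closing the induction.

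The main point to verify carefully is the identification of the censored dynamics with an honest heat-bath chain on $B_{i_k}$ during the $k$-th interval --- this rests on the domain Markov property together with the fact that the Poisson clocks on edges outside $B_{i_k}$ are censored --- and the fact that $\tmix^{\eta,B_{i_k}}(\epsilon)$ bounds the distance to stationarity uniformly over the initial configuration $\bar X_{(k-1)T}\restriction_{B_{i_k}^o}$, which is the usual worst-case formulation of $\tmix$. Neither involves substantive difficulty; the real content of the statement is the bookkeeping encapsulated in $\rho$, which quantifies the only way the coupling can break down.
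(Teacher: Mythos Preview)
Your proof is correct and follows essentially the same approach as the paper: induction on $k$, decomposing the $k$-th step via the Markov property, and splitting according to whether the induced boundary condition on $\partial B_{i_k}$ is permissible. The only cosmetic difference is that you phrase the argument through an explicit coupling, whereas the paper works directly with total variation distances via a triangle-inequality decomposition; these are equivalent formulations of the same bookkeeping.
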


\begin{remark}
Although we defined the systematic and censored block dynamics for deterministic block updates, one could easily formulate the same bound for the usual block dynamics with random updates, where the $s$ sub-blocks are each assigned i.i.d.\ Poisson clocks (cf.~\cite{Ma97}), by also randomizing the order in which the censored block dynamics updates sub-blocks, using the identity coupling on the corresponding clocks.
\end{remark}

\begin{proof}[\textbf{\emph{Proof of Proposition~\ref{prop:censoring-block-dynamics}}}]
We now prove Eq.~\eqref{eq-censoring-block} by induction on $k$. Fix any $\omega_0$ and let $\delta_k = \left\|\P_{\omega_0}\left(\bar{X}_{k T}\in\cdot\right)-\P_{\omega_0}\left(Y_k\in\cdot\right) \right\|_{\tv}$ denote its left-hand side; observe that $\delta_0=0$ by definition, and
suppose that $\delta_k \leq k(\rho+\epsilon)$ for some $k$.
Denote by $i=i_{k+1}$ the block that is updated at time $k+1$ by the systematic block dynamics, and let $\bar{X}_t^{(i)}$ and $Y_k^{(i)}$ be the censored and systematic chains corresponding to the block sequence $(B_{(i+\ell)\bmod s})_{\ell \geq 0}$ (where the block $B_i$ is the first to be updated). By the Markov property and the triangle inequality,
\begin{align}
\delta_{k+1} &\leq \frac12 \sum_{\omega,\omega'}\bigg( \Big| \P_{\omega_0}(\bar X_{k T} = \omega')-\P_{\omega_0}(Y_k = \omega')\Big| \,\P_{\omega'}(\bar X_T^{(i)}=\omega) \nonumber\\
&\qquad\qquad \!\!\! +\Big| \P_{\omega'}(\bar X_{T}^{(i)} = \omega)-\P_{\omega'}(Y_1^{(i)} = \omega)\Big|\, \P_{\omega_0}(Y_k=\omega')\bigg) \nonumber\\
&=\delta_{k} + \sum_{\omega'} \P_{\omega_0}(Y_k=\omega')
 \left\| \P_{\omega'}(\bar X_{T}^{(i)} \in\cdot)-\P_{\omega'}(Y_1^{(i)} \in\cdot)\right\|_\tv \,.\label{eq:induct-step}
\end{align}
The last summand in~\eqref{eq:induct-step} satisfies
\begin{align*}
\sum_{\omega} &\P_{\omega_0}(Y_k=\omega)
 \left\| \P_{\omega}(\bar X_{T}^{(i)} \in\cdot)-\P_{\omega}(Y_1^{(i)} \in\cdot)\right\|_\tv \\
 &\leq \P_{\omega_0}(Y_k\restriction_{\partial B_i}\in \Gamma_i)\max_{\omega:\omega\restriction_{\partial B_i}\in\Gamma_i} \left\|\P_\omega(\bar{X}_T^{(i)}\in\cdot)-\P_\omega(Y_1^{(i)}\in\cdot)\right\|_\tv
 + \P_{\omega_0}(Y_k\restriction_{\partial B_i}\notin \Gamma_i) \\
 &\leq (1-\rho)\epsilon + \rho\,,
\end{align*}
by the definition of $T=T_1(\epsilon)$ and $\rho$; here we identified the configuration on $G-B_i^o$ with the boundary it induces on $\partial B_i$. Combined with Eq.~\eqref{eq:induct-step}, this completes the proof of Eq.~\eqref{eq-censoring-block}.
\end{proof}

\begin{remark}\label{rem:improved-cens-sys-max-state}
In the setting of Proposition~\ref{prop:censoring-block-dynamics}, when the initial state is $\omega_0 \in \{0,1\}$ (either minimal or maximal), one can obtain the following improved bound. Set
\begin{equation}\label{eq:T-relax} T=\max_{i} \max_{\xi\in\Gamma_i} \tmix^{\xi,B_i}(\omega_0\restriction_{B_i},\epsilon)\,, \end{equation}
where $\tmix^{\xi,B_i}(\omega_0,\epsilon) = \inf \{ t : d_\tv^{(\omega_0,\xi)}(t) \leq \epsilon\}$, relaxing the previous definition~\eqref{eq:T} of $T$ to only consider the initial state $\omega_0$. Let $(\bar X_t)$ be the censored block dynamics w.r.t.\ this new value of $T$, and denote by $(\bar X'_{t})$ the modification of $(\bar X_t)$ where, for every $k\geq 1$, the configuration of the block $B_{i_{k}}$ (i.e., the block that is to be updated in the interval $((k-1)T,kT]$) is reset at time $(k-1) T$ to the original value of $\omega_0$ on that block.
We claim that~\eqref{eq-censoring-block} holds\footnote{In fact,~\eqref{eq-censoring-block} is valid for $\bar X'_t$ with the relaxed $T$ in~\eqref{eq:T-relax}  for
every $\omega_0$, not just for the maximal and minimal configurations; however, it is when $\omega_0\in\{0,1\}$ that the modified dynamics $\bar X_t'$ can easily be compared to $\bar X_t$, and thereafter to $X_t$, via the censoring inequality of Theorem~\ref{thm:censoring}.}
for the relaxed value of $T$ in~\eqref{eq:T-relax} if we replace $\bar X_t$ by $\bar X'_t$.
Indeed, all the steps in the above proof of Proposition~\ref{prop:censoring-block-dynamics} remain valid up to the final inequality, at which point the fact that we consider $\bar X'_t$ (as opposed to $\bar X_t$) implies that
\[
\max_{\omega:\omega\restriction_{\partial B_i}\in\Gamma_i} \left\|\P_\omega(\bar{X}_T^{(i)}\in\cdot)-\P_\omega(Y_1^{(i)}\in\cdot)\right\|_\tv = \max_{\xi\in\Gamma_i} \left\|\P_{\omega_0\restriction_{B_i}}(\bar{X}_T^{(i)}\in\cdot)-\pi_{B_i}^\xi \right\|_\tv\,,
\]
which is at most $\epsilon$ when $T$ is as defined in~\eqref{eq:T-relax}.
\end{remark}

\section{Proof of main result}

In this section, we prove Theorem~\ref{mainthm:1} by combining the equilibrium estimates of~\S\ref{sec:equilibrium-estimates} with the dynamical tools provided in~\S\ref{sec:dynamical-techniques}. We first establish an analog of Theorem~\ref{mainthm:1}  (Theorem~\ref{thm:mainthm-fixed-bc}) for ``typical'' boundary conditions (defined in~\S\ref{sub:typical-bc} below), and then, using Proposition~\ref{prop:disjoint-crossings},
 derive from it the case of periodic boundary conditions in~\S\ref{sub:proof-mainthm}.
The effect of boundary bridges (which may foil the multiscale coupling approach, as described in~\S\ref{sub:main-techniques}) is controlled by restricting the analysis to those boundary conditions that have $O(\log n)$ bridges, and applying Proposition~\ref{prop:censoring-block-dynamics} to bound the mixing time under such  boundary conditions. We now define the favorable boundary conditions for which we prove a mixing time upper bound of $n^{O(\log n)}$.

\subsection{Typical boundary conditions}\label{sub:typical-bc}
We first define the class of ``typical" boundary conditions on a segment (e.g., $\partial_\north \Lambda$).
\begin{definition}[typical boundary conditions on a segment]\label{def:xi}
For $K> 0$, $N\geq 1$, and a segment $L$, let $\Xi_{K,N}$ be the set of boundary conditions $\xi$ on $L$ such that
\[ |\Gamma^e(\xi)|\leq K\log N\quad\mbox{ for every $e\in L$}\,.\]
\end{definition}

We will later see (as a consequence of Lemma~\ref{lem:retain-bc-1} below) that the boundary conditions on each of the sides of a box $\Lambda$ induced by the infinite-volume FK measure $\pi_{\Z^2}$  belong to the class of ``typical'' boundary conditions with high probability.

Next, we define the global property we require of typical boundary conditions.

\begin{definition}[typical boundary conditions on $\partial \Lambda$]\label{def:upsilon}
Let $\Upsilon_{K_1,K_2,N}=\Upsilon^\Lambda_{K_1,K_2,N}$ be the set of boundary conditions $\xi$ on $\partial \Lambda$ such that $\xi\restriction_{\partial_i\Lambda}\in \Xi_{K_1,N}$ for every $i=\north,\south,\east,\west$, and $\xi$ has at most $K_2\log N$ distinct components with vertices on different sides of $\partial \Lambda$.
\end{definition}

\begin{remark}\label{rem:wired-free}
The wired and free boundary conditions on a side $\partial_i \Lambda$ are always in $\Upsilon_{K_1,K_2,N}$ whenever $K_1\log N\geq 1$ and $K_2 \log N\geq 1$ (in the former all vertices are in just one component and in the latter no two vertices are in the same component).
\end{remark}

\subsection{Mixing under typical boundary conditions}\label{sub:proof-fixed-bc}

Since periodic boundary conditions are not in $\Upsilon_{K_1,K_2,N}$ for any $K_2>0$, we first bound the mixing time on rectangles $\Lambda_{N,N'}$ where $N'=\lfloor \bar \alpha N\rfloor$ for $\bar \alpha\in (0,1]$, with boundary conditions $\xi \in \Upsilon_{K_1,K_2,N}$.

\begin{theorem}\label{thm:mainthm-fixed-bc}
Let $q\in (1,4]$ and fix $\bar \alpha \in (0,1]$ and $K_1,K_2>0$. Consider the Glauber dynamics for the critical FK model on $\Lambda_{N,N'}$ with $\bar \alpha N \leq  N' \leq N$ and boundary conditions $\xi\in \Upsilon_{K_1,K_2,N}$. Then there exists $c=c(\bar \alpha,q, K_1,K_2)>0$ such that
\[\tmix\lesssim N^{c\log N}\,.
\]
\end{theorem}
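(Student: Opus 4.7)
The plan is a multiscale induction on the side length $N$: the mixing-time bound at scale $\le \tfrac{3}{4}N$ under any boundary conditions in $\Upsilon$ with suitably adjusted constants will yield the bound at scale $N$. I would cover $\Lambda=\Lambda_{N,N'}$ by two rectangles $B_0,B_1$, each of size roughly $\lceil\tfrac{3}{4}N\rceil\times N'$ and overlapping in a strip of width $\Theta(N)$. Since the extremal configurations $0,1$ satisfy the monotonicity of Fact~\ref{fact:init-config-comparison} and total variation distance is controlled by them via~\eqref{eq-dbar-upper-bound}, it suffices to couple the dynamics from these two states; applying the censoring inequality (Theorem~\ref{thm:censoring}) further reduces the problem to analyzing the censored block dynamics $\bar X_t$ that alternates updates of $B_0$ and $B_1$ for time $T$ each.

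I would then apply Proposition~\ref{prop:censoring-block-dynamics} to replace $\bar X_t$ by the systematic block dynamics $Y_k$. The permissible class $\Gamma_i$ on $\partial B_i$ would be taken to be $\Upsilon^{B_i}_{K_1',K_2',N}$ for constants $K_1'>K_1$, $K_2'>K_2$ chosen large enough that Corollary~\ref{cor:max-bridges} (applied to each side of $\partial B_i$) together with Proposition~\ref{prop:disjoint-crossings} (controlling the number of cross-side components) and a union bound over the $O(N)$ boundary edges give $\rho \le N^{-c'}$. By the induction hypothesis, $T \le (\tfrac34 N)^{c\log(3N/4)}$ uniformly over $\xi\in\Gamma_i$, so Proposition~\ref{prop:censoring-block-dynamics} yields that $\bar X_{kT}$ and $Y_k$ agree up to total variation $o(1)$ for every $k \le N^{O(1)}$.

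The crux is bounding the mixing of $Y_k$. I would couple the chains started from $0$ and $1$ via the grand coupling and show that each block update independently succeeds in matching the two chains on its own half of $\Lambda$ with probability at least $N^{-c}$. The RSW estimates (Theorem~\ref{thm:RSW}, Corollary~\ref{cor:annulus-circuit}) produce, with constant probability, a vertical dual-crossing of the updated block that separates the symmetric difference from the rest of $\Lambda$; on this event the two chains would agree on one side of the crossing, \emph{provided} no primal bridge of the induced boundary reconnects the two sides through the as-yet-uncovered portion. Here typicality is used: for each edge along the dual crossing, I would apply the bridge modification of Definition~\ref{def:bridge-modification} and absorb the cost of switching to the modified boundary via Lemma~\ref{lem:bc-perturbation}, at a total multiplicative cost of $q^{O(\log N)} = N^{O(1)}$. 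After $N^{O(1)}$ alternating block rounds, the failure probability is boosted to $o(1)$; the resulting union bound over rounds and boundary edges is sustained by the \emph{exponential} tail in Proposition~\ref{prop:bridge-bound}, not merely a polynomial one.

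Putting everything together, $Y_{N^{O(1)}}$ couples from $0$ and $1$ with probability $1-o(1)$, so after $kT \le N^{O(1)}\cdot(\tfrac{3}{4}N)^{c\log(3N/4)} \le N^{c\log N}$ the actual dynamics has mixed to within $o(1)$ of $\pi_\Lambda^\xi$, closing the induction. The main obstacle is maintaining typicality of induced boundary conditions across all scales and across the $N^{O(1)}$ block updates: the bridge modifications and the Peres--Winkler comparison must preserve membership in $\Upsilon$ with the slightly larger constants $K_1',K_2'$, and the exponential tail of Proposition~\ref{prop:bridge-bound} is precisely what is needed to absorb the polynomially many union bounds involved.
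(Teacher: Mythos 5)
Your high-level strategy matches the paper's: a multiscale induction via censored and systematic block dynamics (Proposition~\ref{prop:censoring-block-dynamics}), with $\{B_\east, B_\west\}$ of width $\approx\tfrac34 N$, RSW-driven dual crossings plus bridge modifications to couple the systematic chain from $0$ and $1$, and the exponential tail of Proposition~\ref{prop:bridge-bound} to survive the $N^{O(1)}$ union bounds. However, there is one substantive gap in how you close the induction: you cover $\Lambda_{N,N'}$ by blocks of size $\lceil\tfrac34 N\rceil\times N'$, but since the hypothesis allows $N'$ up to $N$, the \emph{longer} side of $B_i$ may be $N'$, which has not decreased at all. Writing ``by the induction hypothesis, $T\le(\tfrac34 N)^{c\log(3N/4)}$'' implicitly assumes the block's scale has shrunk, and when $N'>\tfrac34 N$ the inductive hypothesis at scale $\tfrac34 N$ does not apply to $B_i$. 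The paper resolves this by running the censoring/systematic-block comparison a \emph{second} time inside $B_i$, rotated by $\pi/2$, whose sub-blocks have dimensions $\lfloor\tfrac34 N\rfloor\times\lfloor\tfrac34 N'\rfloor$; only after two such passes have both dimensions decreased by $\tfrac34$ so that the hypothesis can be invoked. Without this double step the recursion does not terminate in $O(\log N)$ levels, and the $N^{c\log N}$ bound does not follow.

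Two smaller inaccuracies worth noting. First, Proposition~\ref{prop:disjoint-crossings} plays no role in the proof of Theorem~\ref{thm:mainthm-fixed-bc}; it is reserved for the cylinder and torus (Theorems~\ref{thm:cylinder} and~\ref{mainthm:1}), where \emph{periodic} boundary conditions permit many disjoint primal crossings of $\Lambda-B_i$. Under non-periodic $\xi\in\Upsilon_{K,N}$, the count of cross-side components on $\partial B_i$ is controlled \emph{deterministically}: after the side modification, such components must route through a bridge over one of the two corner edges of $\partial_\west B_\east$, and $\xi\in\Upsilon_{K,N}$ caps these at $2K\log N$ (this is the content of Corollary~\ref{cor:retain-bc-2}). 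Second, the bridge modification is not applied ``for each edge along the dual crossing'' but once, at the top level, to the two fixed boundary edges $e^\star_\north,e^\star_\south$ where the dual crossing terminates, together with the side modification; the cost is then absorbed by Lemma~\ref{lem:bc-perturbation} before invoking the censoring inequality. Your description of the total $q^{O(\log N)}$ cost is correct, but the modification must be done \emph{before} switching to the censored dynamics, not during the coupling of the systematic chain.
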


Observe that if we define
\begin{equation}\label{eq-Upsilon-K-N}\Upsilon_{K,N}:=\Upsilon_{K,2K,N}\,,
\end{equation}
clearly $\Upsilon_{K_1,K_2,N}\subset \Upsilon_{\max\{K_1,K_2\},N}$, so  it suffices to consider $\Upsilon_{K,N}$ for general $K>0$.

The proof of Theorem~\ref{thm:mainthm-fixed-bc} proceeds by analyzing the censored and systematic block dynamics on $\Lambda$, obtaining good control on the systematic block dynamics using the RSW estimates of~\cite{DST15}, then comparing it to the censored block dynamics.
The choice of parameters for which we will apply Proposition~\ref{prop:censoring-block-dynamics} is the following.

\begin{definition}[block choice for censored / systematic block dynamics] \label{def:block-choice} Let $q\in (1,4]$ and for any $n' \leq n\leq N$, consider the critical FK Glauber dynamics on $\Lambda_{n,n'}$.  Let
\begin{align*}
B_\east &=\llb \tfrac n4,n\rrb \times\llb 0, n'\rrb \,, \\
B_\west &=\llb 0,\tfrac {3n}4\rrb \times\llb 0, n'\rrb \,,
\end{align*}
ordered as $B_0=B_\east,B_1=B_\west$ as in the setup of Proposition~\ref{prop:censoring-block-dynamics}.
For $K=\max\{K_1,  K_2\}$ given by Theorem~\ref{thm:mainthm-fixed-bc}, let $\Gamma_i=\Upsilon_{K,N}$ be the set of permissible boundary conditions for the block $B_i$ in $\Lambda_{n,n'}$.
\end{definition}

Before proving Theorem~\ref{thm:mainthm-fixed-bc} we will prove two lemmas that will be necessary for the application of Proposition~\ref{prop:censoring-block-dynamics}. We first introduce some preliminary notation.

For any $n\leq N$, label the following edges in $\partial \Lambda_{n,n'}$:
\[
e^\star_\south=(\lfloor \tfrac n2\rfloor+\tfrac 12,0)\,, \qquad and \qquad
e^\star_\north=(\lfloor \tfrac n2\rfloor+\tfrac 12, n' )\,.
\]

Recall the definitions of the bridge modification $\xi^{e}$ and the side modification $\xi^s$ from Definitions~\ref{def:bridge-modification}--\ref{def:side-modification}.
We will, throughout the proof of Theorem~\ref{thm:mainthm-fixed-bc}, for any boundary condition $\xi$ on $\partial \Lambda_{n,n'}$, let the modification $\xi' \leq \xi$ be given by
\begin{align}\label{eq:modified-bc}
\xi' := \xi^{e_\south^\star} \wedge \xi^{e_\north^\star} \wedge \xi^s\,,
\end{align}
i.e.,  the bridge modification of $\xi$ on $e_\south^\star$ and $e_\north^\star$, combined with the side modification $\xi^s$.

If $\Xi_{K,N},\Upsilon_{K,N}$ are the sets of boundary conditions defined in Definition~\ref{def:block-choice}, we let $\Xi'_{K,N},\Upsilon'_{K,N}$ be the sets corresponding to the modification $\xi\mapsto \xi'$ of every element in the original sets. Observe that $\Upsilon'_{K,N} \subset \Upsilon_{K,N}$ and likewise, $\Xi_{K,N}'\subset \Xi_{K,N}$.

\begin{lemma}\label{lem:systematic-bound}
Let $\alpha\in (0,1]$ and consider the systematic block dynamics $\{Y_k\}_{k\in \mathbb N}$ on $\Lambda_{n,n'}$ with $\lfloor \alpha n\rfloor \leq n'\leq n$ and blocks given by Definition~\ref{def:block-choice}. There exist $c_Y,c_\star(\alpha,q)>0$ such that for every two initial configurations $\omega_1,\omega_2$, and every boundary condition $\xi$ on $\partial \Lambda_{n,n'}$, modified to $\xi'$ by Eq.~\eqref{eq:modified-bc}, for all $k\geq 2$,
\[\|\mathbb P^{\xi'}_{\omega_1} (Y_{k}\in \cdot)-\mathbb P^{\xi'}_{\omega_2}(Y_k\in \cdot)\|_\tv \leq \exp (-c_Ykn^{-c_\star})\,.
\]
In particular, for all $k\geq 2$,
\[\max_{\omega_0}\|\mathbb P^{\xi'}_{\omega_0} (Y_{k}\in \cdot)-\pi^{\xi'}_{\Lambda_{n,n'}}\|_\tv \leq \exp (-c_Ykn^{-c_\star})\,.
\]
\end{lemma}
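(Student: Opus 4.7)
The plan is to reduce, via the monotone grand coupling and submultiplicativity of $\bar d_\tv$, to a one-sweep coupling bound for the extremal chains, then iterate. By monotonicity of the grand coupling, any chain $Y_k^{(\omega)}$ satisfies $Y_k^{(\mathbf 0)} \leq Y_k^{(\omega)} \leq Y_k^{(\mathbf 1)}$, so the coupling distance between arbitrary $\omega_1, \omega_2$ is bounded by that between the extremal chains started from $\mathbf 1$ (all-open) and $\mathbf 0$ (all-closed). Viewing one full sweep (the pair of consecutive updates of $B_\east$ and $B_\west$) as one step of a time-homogeneous Markov chain, submultiplicativity gives $\bar d_\tv(k) \leq \bar d_\tv(2)^{\lfloor k/2 \rfloor}$ for $k\geq 2$. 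It thus suffices to establish that under the grand coupling,
\[ \P(Y_2^{(\mathbf 1)} = Y_2^{(\mathbf 0)}) \geq c\, n^{-c_\star} \]
for some $c=c(\alpha,q)>0$, which yields $\bar d_\tv(k) \leq (1-cn^{-c_\star})^{\lfloor k/2\rfloor} \leq \exp(-c_Y k n^{-c_\star})$.

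For the one-sweep coupling, consider the first $B_\east$ update and let $A$ be the event that there exists a vertical dual-crossing $\zeta^\ast$ of $B_\east$ with endpoints at (dual-vertices incident to) $e_\south^\star$ and $e_\north^\star$. By self-duality of the critical FK model at $p_c$ combined with Proposition~\ref{prop:point-to-point-crossing} applied to $B_\east$ (a rectangle whose aspect ratio is bounded in terms of $\alpha$), there exists $c_\star(\alpha,q)>0$ such that $\pi_{B_\east}^{(\cdot)}(A) \gtrsim n^{-c_\star}$ uniformly in the boundary conditions induced on $\partial B_\east$. This applies in particular to the $\mathbf 1$ chain's induced boundary (the hardest case for dual-crossings), and by monotonicity of the grand coupling the primal-closed edges forming $\zeta^\ast$ in the $\mathbf 1$ chain are also primal-closed in the $\mathbf 0$ chain, so on $A$ the path $\zeta^\ast$ is a common dual-path in both chains.

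The key structural input---where the modification $\xi\to\xi'$ from~\eqref{eq:modified-bc} enters---is that on $A$ the dual-path $\zeta^\ast$ together with $\xi'$ separates the east-of-$\zeta^\ast$ portion of $B_\east^o$ from the rest of $\Lambda$. Since $\xi' = \xi^{e_\south^\star}\wedge \xi^{e_\north^\star}\wedge \xi^s$ has no bridges over $e_\south^\star$ or $e_\north^\star$ and no components spanning multiple sides of $\partial\Lambda$, no primal connection can route from east of $\zeta^\ast$ to the west portion of $\Lambda$ (including $\Lambda-B_\east$ and the west half of $B_\east^o$) through $\xi'$; and within $B_\east^o$ such connections are blocked by the dual-path $\zeta^\ast$. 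By the domain Markov property, the conditional law of the east-of-$\zeta^\ast$ configuration in $B_\east^o$ given $\zeta^\ast$ and $\xi'$ is therefore identical in both chains. A staged grand coupling of the $B_\east$ update---first exposing $\zeta^\ast$ via an east-to-west dual exploration (which only inspects primal-closed edges, whose status is preserved under the monotone coupling), then sampling the east-of-$\zeta^\ast$ configurations by identity coupling of their shared conditional distribution---forces the two chains to agree on $\Lambda-B_\west^o$ after the $B_\east$ update. The ensuing $B_\west$ update then faces identical outside boundary conditions in both chains and, via shared randomness, produces identical configurations on $B_\west^o$, giving $Y_2^{(\mathbf 1)}=Y_2^{(\mathbf 0)}$ on $A$.

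The principal technical delicacy is rigorously implementing the staged grand coupling: verifying that $\zeta^\ast$ can be exposed jointly in both chains via a single east-to-west dual exploration---which works because the exploration only inspects primal-closed edges, whose status is preserved by the monotone coupling---and that the post-exposure conditional distributions east of $\zeta^\ast$ literally coincide, so that identity coupling yields actual agreement rather than merely stochastic domination. This last property is exactly the separation afforded by $\xi'$'s disconnected sides and absence of bridges over $e_\north^\star, e_\south^\star$. The polynomial factor $n^{-c_\star}$ (as opposed to a constant) in the coupling probability is necessary for the $q=4$ case, where uniform-in-boundary RSW bounds (Proposition~\ref{prop:true-RSW}) are unavailable and one must invoke Proposition~\ref{prop:point-to-point-crossing}.
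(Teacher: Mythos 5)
Your proof follows essentially the same route as the paper's: expose a vertical dual-crossing $\zeta^\ast$ from $e_\south^\star$ to $e_\north^\star$ during the $B_\east$ update, lower bound its probability via Proposition~\ref{prop:point-to-point-crossing} plus monotonicity, use the modification $\xi'$ and the domain Markov property to couple the two chains east of $\zeta^\ast$, then couple fully on the subsequent $B_\west$ update, and iterate. The reduction to extremal chains via the grand coupling and the submultiplicativity argument are equivalent reformulations of the paper's ``Markovian, uniform over initial states'' iteration. I want to flag two imprecisions, one of which is a genuine (though easily patched) gap.

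\emph{First}, you apply Proposition~\ref{prop:point-to-point-crossing} to all of $B_\east$, asking only that $\zeta^\ast$ be a vertical dual-crossing of $B_\east$. This is not sufficient: if $\zeta^\ast$ wanders east of $\partial_\east B_\west$ (i.e.\ past $x=\lfloor 3n/4 \rfloor$), then the region east of $\zeta^\ast$ is a strict subset of $\Lambda_{n,n'}-B_\west^o$, so the chains are not coupled everywhere the $B_\west$ update needs to see, and the second block update does not finish the job. The paper avoids this by applying Proposition~\ref{prop:point-to-point-crossing} to the \emph{overlap region} $R^\ast = B_\west^\ast \cap B_\east^\ast$, so that $\zeta^\ast$ is guaranteed to lie within $\llb n/4, 3n/4\rrb \times \llb 0,n'\rrb$ and thus separates $\Lambda_{n,n'}-B_\west^o$ from the rest. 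You should restrict the crossing event accordingly.

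\emph{Second}, the description ``east-to-west dual exploration which only inspects primal-closed edges'' is not quite a well-posed exploration. Any exploration that locates $\zeta^\ast$ must also reveal the primal-open edges bounding it (otherwise you cannot tell where the dual-open component stops). The paper explores the primal-open clusters attached to the \emph{west} half of $\partial B_\east$ and takes $\zeta^\ast$ to be the east boundary of this revealed region; this is a west-to-east exploration that reveals nothing east of $\zeta^\ast$. The monotonicity statement you want is then: if $\zeta^\ast$ is dual-open under the dominating chain (bounded below by $\pi^1_{B_\east}$), it is dual-open under all chains in the grand coupling, and the unrevealed region east of $\zeta^\ast$ carries identical boundary conditions in all chains precisely because $\xi'$ has no bridges over $e_\south^\star, e_\north^\star$ and no cross-side components. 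Your structural observation about the role of $\xi'$ is exactly right; it is the exploration mechanics that need tightening.
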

\begin{proof}
We construct a coupling between the two systematic block dynamics chains, starting from two arbitrary initial configurations $\omega_1,\omega_2$, as follows.  The systematic block dynamics first samples a configuration on $B_\east^o$ (the interior of $B_\east$) according to $\pi_{B_\east}^{\xi',\omega_i}$ for $i=1,2$, where $(\xi',\omega_i)$ is the boundary condition induced by $\omega_i\restriction_{B_\west-B_\east^o}\cup \xi'$ on $\partial B_\east$. By Proposition~\ref{prop:point-to-point-crossing}, applied to the box
\[ R^\ast=B_\west^*\cap B_\east^*\,,\]  and monotonicity in boundary conditions,
\[\pi_{B_\east}^1(e^\star_\south \stackrel {R^\ast}\longleftrightarrow e^\star _\north)\gtrsim n^{-c_\star}\,,
\]
where $c_\star(\min\{\frac 12,\alpha\},\epsilon=\tfrac14,q)>0$ is given by that proposition.

\begin{figure}
  \hspace{-0.15in}
  \begin{tikzpicture}

    \newcommand{\xfigshift}{-80pt}
    \newcommand{\yfigshift}{40pt}

      \node (plot) at (0,0){\includegraphics[width=0.46\textwidth]{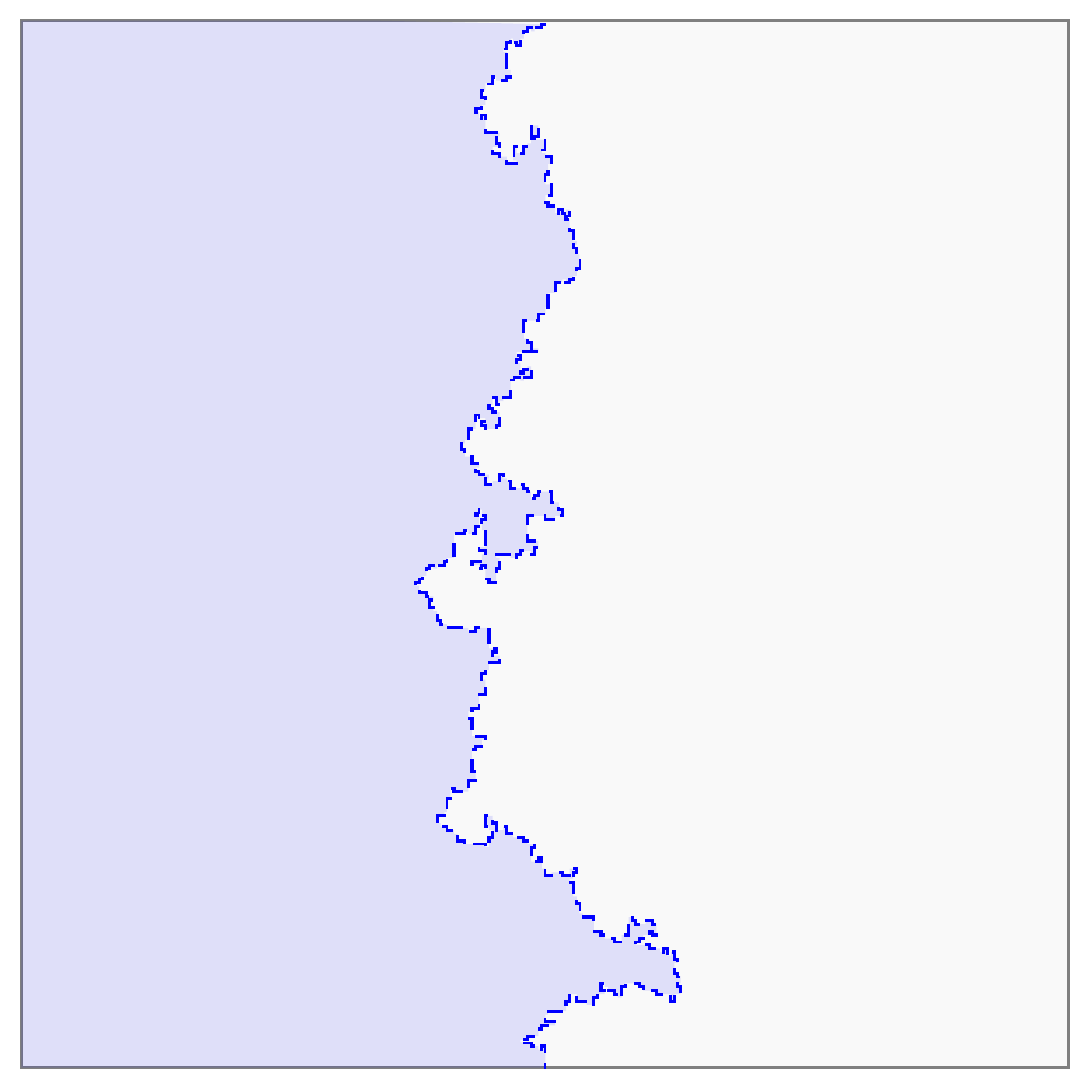}};

    \begin{scope}[shift={($(plot.south west)+(\xfigshift,\yfigshift)$)}, x={($(plot.south east)+(\xfigshift,\yfigshift)$)},y={($(plot.north west)+(\xfigshift,\yfigshift)$)}, font=\small,scale=.91]

      \draw[color=black] (2,.835) rectangle (0,-0.177) ;
      \draw[thick, draw=black, fill=green, fill opacity=.07] (2,.835) rectangle (0.49,-0.177);
      \draw[fill=gray, opacity=.04] (1.495,.835) rectangle (0,-0.177);

         \draw[thick, color=red] (.93,.835) arc (150:30:.08) ;
     \draw[thick, color=DarkGreen] (.95,.835) arc (150:30:.05) ;
      \draw[thick, color=orange] (.91,.835) arc (150:30:.1) ;
     \draw[thick, color=purple] (.89,.835) arc (150:30:.15) ;
     \draw[thick, color=purple] (1.15,.835) arc (150:30:.03) ;
      \draw[thick, color=orange] (1.08,.835) arc (150:30:.03) ;

      \draw[white, fill=white] (.99,.84) rectangle (1.01,.92);
      \draw[color=blue, dashed] (1,.84) -- (1,.93);

      \node[font=\tiny] at (0.49,-0.23) {$\tfrac n4$};
      \node[font=\tiny] at (1,-0.23) {$\tfrac n2$};
      \node[font=\tiny] at (1.495,-0.23) {$\tfrac {3n}4$};
      \node[font=\small] at (1.29,.88) {$\xi'$};

      \node[color=DarkGray,font=\large] at (0.24,0.32) {$\boldsymbol 1/0$};
      \node[font=\Large, color=DarkGreen] at (1.28,.32) {$B_\east$};

    \end{scope}
  \end{tikzpicture}
  \caption{If the depicted dual-crossing exists under any $(\xi',\omega_i)$, and the bridges over $e_\south^\ast,e_\north^\ast$ are disconnected, one can couple the two chains on the green shaded region, and in particular on $B_\east -B_\west^o$.}
  \label{fig:systematic-block-dynamics}
\end{figure}

We can condition on the west-most vertical dual-crossing between $e^\star_\south$ and $e^\star_\north$ (if such a dual-crossing exists) as follows: reveal the open components of $\partial B_\east\cap \llb 0,\lfloor \tfrac n2\rfloor \rrb \times \llb 0,n'\rrb$ as in~\cite{LScritical} or~\cite{GL16}, so that no edges in other components are revealed.  If the open components do not connect to the eastern half of $\partial R^\ast$, i.e., to $\partial R^\ast \cap \llb \lfloor \frac n2 \rfloor +1,n\rrb \times \llb 0,n'\rrb$, then it must be the case that the desired open dual-crossing exists and can be exposed without revealing any information about edges east of it.

By monotonicity in boundary conditions, if under $\pi^1_{B_\east}$ such a vertical dual-connection from $e_\south^\star$ to $e_\north^\star$ exists, the grand coupling (see~\S\ref{sub:prelim-dynamics}) ensures that the same under $\pi^{\xi',\omega_i}_{B_\east}$ for any $\omega_i \restriction_{\Lambda_{n,n'}-B^o_\east}$.
By definition of the modification $\xi'$, there are no bridges over $e_\south^\star$, no bridges over $e_\north^\star$, and no components of $\xi'$ with vertices in multiple sides of $\partial \Lambda_{n,n'}$; thus, conditional on  this vertical  dual-crossing, the following event holds:
\begin{align*}
\bigcap \left\{v\stackrel{\xi'}\nconn w: \begin{array}{rl}
v\in &\!\!\! \partial \Lambda_{n,n'} \cap\llb 0,\tfrac n2\rrb \times \llb 0,n'\rrb \\
\noalign{\medskip}
w\in &\!\!\!\partial \Lambda_{n,n'} \cap\llb \tfrac n2,n\rrb \times \llb 0,n'\rrb\ \,
\end{array}
\right\}\,.
\end{align*}
By the domain Markov property (see Fig.~\ref{fig:systematic-block-dynamics}), for any pair $\omega_1\restriction_{B_\west -B_\east^o}$ and $ \omega_2\restriction_{B_\west-B_\east^o}$,
\[\pi_{B_\east}^{\xi',\omega_1} \left(\omega \restriction _{\llb \frac{3n}4,n\rrb \times \llb 0,n'\rrb} \given e_\south^\star \stackrel{R^\ast}\longleftrightarrow e_\north^\star \right)\stackrel{d}= \pi_{B_\east}^{\xi',\omega_2} \left (\omega \restriction _{\llb \frac{3n}4,n\rrb \times \llb 0,n'\rrb}\given e_\south^\star \stackrel{R^\ast}\longleftrightarrow e_\north^\star \right)\,,
\]
using that the boundary conditions to the east of the vertical dual-crossing are the same under both measures. (In the presence of bridges over $e_\south^\star$ or $e_\north^\star$ the above distributional equality does \emph{not} hold; different configurations west of such a dual-crossing could still induce different boundary conditions east of the dual-crossing, preventing coupling (as illustrated in Fig.~\ref{fig:long-range-bc})---cf.\ the case of integer $q$ where this problem does not arise.)

This implies that, on the event $e^\star_\south \stackrel {{R^\ast}}\longleftrightarrow e^\star _\north$, the grand coupling couples the two systematic block dynamics chains so that they agree on $\Lambda_{n,n'}-B_\west^o$ with probability $1$. In this case, let $\eta$ be the resulting configuration on $B_\east -B_\west^o$, so that
\[\eta=Y_1\restriction_{B_\east -B_\west^o}\,.
\]
If the two chains were coupled on $B_\east-B_\west^o$, the boundary conditions $(\xi',\eta)$ on $\partial B_\west$ would be the same for any pair of systematic block dynamics chains with initial configurations $\omega_1,\omega_2$; in particular the identity coupling would couple them on all of $\Lambda_{n,n'}$ in the next step when $B_\west$ is resampled from $\pi_{B_\west}^{\xi',\eta}$. Thus, for some $c>0$,
\[\|\mathbb P_{\omega_1}^{\xi'}(Y_2 \in \cdot)-\mathbb P_{\omega_2}^{\xi'}(Y_2\in \cdot)\|_\tv\leq 1-c n^{-c_\star}\,.
\]

Since the systematic block dynamics is Markovian and all of the above estimates were uniform in $\omega_1$ and $\omega_2$, 
 the probability of not having coupled in time $k$ under the grand coupling is bounded above by
\[(1-cn^{-c_\star})^{\lfloor k/2\rfloor }\leq\exp(- c \lfloor k/2\rfloor  n^{-c_\star})\,. \qedhere
\]
\end{proof}

The next lemma will be key to obtaining the desired upper bound on $\rho$ as defined in~\eqref{eq:def-rho}; it shows that with high probability, the boundary conditions induced by the FK measure on a segment will be in $\Xi_{K,N}$, hence the term  ``typical" boundary conditions.

\begin{lemma}\label{lem:retain-bc-1}
Fix $q\in(1,4]$. There exists $c_\Upsilon(q)>0$ so that, for every $\Xi_{K,N}$ given by Definition~\ref{def:xi} on $\Lambda_{n,n'}$ with $ n'\leq n\leq N$ and $K>0$, and every boundary condition $\xi$,
\[\pi_{B_\east}^\xi(\omega\restriction_{\partial _\east B_\west}\notin \Xi_{K,N})\lesssim N^{-c_\Upsilon K}\,,
\]
where $\omega\restriction_{\partial_\east B_\west}$ denotes the boundary conditions induced on $\partial_\east B_\west$ by $ \omega\restriction_{B_\east -B_\west^o}\cup \xi$.
The same statement holds when exchanging $\east$ and $\west$.
\end{lemma}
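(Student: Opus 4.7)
The strategy is to apply Proposition~\ref{prop:bridge-bound} (in its symmetric, $90^\circ$-rotated form) edge by edge along $\partial_\east B_\west$ and then take a union bound. First I set up the correspondence with Proposition~\ref{prop:bridge-bound}. Take $\Lambda' := B_\east$ (of dimensions roughly $\tfrac{3n}{4}\times n'$) as the ambient rectangle and $R' := \llb \tfrac n4, \tfrac{3n}{4}\rrb \times \llb 0, n'\rrb$ as its ``western portion''; then $\partial_\east R' = \partial_\east B_\west$ and $\Lambda' - R'^o$ coincides with $B_\east - B_\west^o$. By Definition~\ref{def:bridges} (applied to east-boundary segments rather than north-boundary ones), the bridges over an edge $e \in \partial_\east B_\west$ in the boundary condition induced by $\omega\restriction_{B_\east - B_\west^o} \cup \xi$ are precisely the elements of $\Gamma^e(\omega\restriction_{\Lambda'-R'^o},\xi)$. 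Because every ingredient of the proof of Proposition~\ref{prop:bridge-bound}---the RSW inputs (Theorem~\ref{thm:RSW}, Proposition~\ref{prop:true-RSW}, Lemma~\ref{lem:wired/free}, Corollary~\ref{cor:wired-free-wired-free}) and the disjoint-bridge bounds (Lemmas~\ref{lem:disjoint-1}--\ref{lem:disjoint-2})---is invariant under the four-fold symmetry of $\mathbb{Z}^2$, the rotated/reflected analogue of Proposition~\ref{prop:bridge-bound} holds with the same exponent, provided the aspect ratio of $B_\east$ stays bounded (as is the case in the block-dynamics settings where this lemma is applied).

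Next I translate the threshold from $K\log n$ to $K\log N$. Setting $K' := K\log N/\log n \geq K$ (using $n\leq N$), the rotated proposition gives, uniformly in $\xi$ and uniformly over $e \in \partial_\east B_\west$,
\[
\pi^\xi_{B_\east}\!\bigl(|\Gamma^e| \geq K\log N\bigr) \;=\; \pi^\xi_{B_\east}\!\bigl(|\Gamma^e| \geq K'\log n\bigr) \;\lesssim\; n^{-cK'} \;=\; N^{-cK},
\]
with $c=c(q)>0$ from Proposition~\ref{prop:bridge-bound}. A union bound over the at most $n'+1 \leq N+1$ edges on $\partial_\east B_\west$ then yields
\[
\pi^\xi_{B_\east}\!\bigl(\omega\restriction_{\partial_\east B_\west} \notin \Xi_{K,N}\bigr) \;\leq\; \sum_{e} \pi^\xi_{B_\east}\!\bigl(|\Gamma^e| \geq K\log N\bigr) \;\lesssim\; N\cdot N^{-cK} \;\lesssim\; N^{-c_\Upsilon K},
\]
with $c_\Upsilon := c/2$; the conclusion is only informative once $K \geq 2/c$, but for smaller $K$ the right-hand side is a constant and the bound is vacuous. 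The $\west\leftrightarrow\east$ analogue follows by the reflection symmetry of $\mathbb{Z}^2$.

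The principal obstacle is the first step: verifying that the concrete geometric constructions inside the proof of Proposition~\ref{prop:bridge-bound}---particularly the nested rectangles $R_i,\tilde R_i$ of Lemmas~\ref{lem:disjoint-1}--\ref{lem:disjoint-2} and the exposure of dual-bridges---transport cleanly to the rotated geometry, where $R'$ occupies two-thirds (rather than one-half) of the parent rectangle and where the strip on the other side has width only $n/4$. Since all these arguments use only lattice symmetry and require just enough ``room'' on both sides of the segment (strips of widths $n/2$ and $n/4$ relative to a segment of length $n'$ comparable to $n$, which is ample for the RSW inputs), the adaptation is mechanical bookkeeping, but it is what requires care.
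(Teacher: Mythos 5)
Your plan — rotate Proposition~\ref{prop:bridge-bound} to bound $|\Gamma^e|$ for each $e\in\partial_\east B_\west$, then rescale $K\log n$ to $K\log N$ via $n\leq N$ and take a union bound over the $O(N)$ boundary edges — is exactly the paper's strategy, and the $K':=K\log N/\log n$ device and the $O(N)$ union bound are carried out just as in the paper. The one place you diverge is in how Proposition~\ref{prop:bridge-bound} is invoked, and that is precisely the spot you flag as ``mechanical bookkeeping''. Applying the proposition directly to $\Lambda'=B_\east$ does not match its hypotheses: the proposition places $e$ on the midline of $\Lambda'$, but $\partial_\east B_\west$ sits at $x=\tfrac{3n}{4}$ whereas the vertical midline of $B_\east=\llb\tfrac n4,n\rrb\times\llb 0,n'\rrb$ is $x=\tfrac{5n}{8}$; so one would indeed have to re-examine the constructions of Lemmas~\ref{lem:disjoint-1}--\ref{lem:disjoint-2} with asymmetric room on the two sides. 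The paper avoids this entirely by introducing the auxiliary rectangle $R=\llb\tfrac n2,n\rrb\times\llb 0,n'\rrb$, whose vertical midline is exactly $\partial_\east B_\west$: Proposition~\ref{prop:bridge-bound} (rotated, with $\alpha=\tfrac12$ thanks to $n'\leq n$) applies verbatim to $R$ with the uniformity in boundary conditions built into the proposition, and then the domain Markov property writes $\pi_{B_\east}^\xi(\cdot)=\mathbb E_{\pi_{B_\east}^\xi}\big[\pi_R^{\xi_R}(\cdot)\big]$, so the worst-case-over-$\eta$ bound on $R$ transfers to $\pi_{B_\east}^\xi$ with no further work. You should adopt this two-step reduction; it converts your flagged ``principal obstacle'' into a one-line application of the existing proposition rather than a re-derivation of it.
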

\begin{proof}
By symmetry, it suffices to prove the bound for the boundary conditions on $\partial_\east B_\west$. Consider the rectangle
\[
R=\llb \tfrac n2, n \rrb \times \llb 0,n'\rrb\,.
\]
By Proposition~\ref{prop:bridge-bound} with aspect ratio $\frac 12$, there exists $c(q)=c(\alpha=\frac 12,q)>0$ such that, for every edge $e\in \partial _\east B_\west$ and every boundary condition $\eta$ on $\partial R$,
\[\pi_R^{\eta}(|\Gamma^e|\geq K\log N)\lesssim N^{-cK}\,,
\]
where, for a configuration $\omega_R$ on $R$, we recall that $|\Gamma^e|$ is the number of disjoint bridges in $\omega_R \restriction_{R-B_\west^o}\cup \xi_R$ over $e$. A union bound over all $n'$ edges on $\partial_\east B_\west$ implies that
\[\max_\eta \pi_R^{\eta}(\omega \restriction_{\partial _\east B_\west} \notin \Xi_{K,N})\lesssim n'N^{-cK}\lesssim N^{-cK+1}\,,
\]
using $n\leq N$. Consequently,
 \[\pi_{B_\east}^\xi(\omega \restriction_{\partial _\east B_\west} \notin \Xi_{K,N})=\mathbb E _{\pi_{B_\east}^\xi}\left[\pi_R^{\xi_R} (\omega \restriction_{\partial _\east B_\west} \notin \Xi_{K,N}) \right]\lesssim N^{-cK+1}\,,
 \] where the expectation is w.r.t.\ $\pi^{\xi}_{B_\east}$ over the boundary conditions $\xi_R$ induced on $R$ by  $\xi$  and the configuration on $B_\east -R^o$.
This concludes the proof of the lemma.
\end{proof}

\begin{corollary}\label{cor:retain-bc-2}
Fix $q\in(1,4]$, and consider the systematic block dynamics on $\Lambda_{n,n'}$ for $ n'\leq n\leq N$
 with block choices as given in Definition~\ref{def:block-choice}.
 There exists $c_\Upsilon(q)>0$ so that, for every fixed $K>0$ and every boundary condition $\xi'\in\Upsilon'_{K,N}$ on $\partial \Lambda_{n,n'}$,
\[\rho \lesssim N^{-c_\Upsilon K}\,,
\]
where
 $\rho$ is as defined as in~\eqref{eq:def-rho} w.r.t.\ the initial configuration $\omega_0\in \{0,1\}$ and the permissible boundary conditions $\Upsilon_{K,N}$.
\end{corollary}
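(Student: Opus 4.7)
My plan is to bound $\P_{\omega_0}(Y_k\restriction_{\partial B_i}\notin \Upsilon_{K,N})$ uniformly in $k\ge 1$ and $i\in\{0,1\}$. Since $Y_k\restriction_{\partial B_{i_k}}=Y_{k-1}\restriction_{\partial B_{i_k}}$ (the step-$k$ update touches only $B_{i_k}^o$) and by symmetry between the two blocks, it suffices to consider $\P_{\omega_0}(Y_k\restriction_{\partial B_\west}\notin \Upsilon_{K,N})$ at a step $k$ where $B_\east$ has just been updated. At such a step, $Y_k\restriction_{B_\east^o}\sim \pi_{B_\east}^{\zeta}$ for a random $\zeta=Y_{k-1}\restriction_{\partial B_\east}$ determined by the past, so I would derive every bound uniformly in $\zeta$ and only then average. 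This simple reduction immediately produces a bound that is uniform in $k$, converting the outer max over $k$ in~\eqref{eq:def-rho} into a bound depending only on $N$.

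Decomposing membership in $\Upsilon_{K,N}$ via Definition~\ref{def:upsilon}, I must verify (a) $Y_k\restriction_{\partial_j B_\west}\in \Xi_{K,N}$ for each $j\in\{\east,\west,\north,\south\}$, and (b) at most $2K\log N$ components of $Y_k\restriction_{\partial B_\west}$ span multiple sides of $\partial B_\west$. For the internal east side, Lemma~\ref{lem:retain-bc-1} gives
\[\max_\zeta \pi_{B_\east}^\zeta\big(\omega\restriction_{\partial_\east B_\west}\notin \Xi_{K,N}\big)\lesssim N^{-c_\Upsilon K},\]
which, after averaging over $\zeta$, yields the desired bound. For the fully external side $\partial_\west B_\west=\partial_\west\Lambda$, disjointness from $B_\east$ together with the side modification of $\xi'\in \Upsilon'_{K,N}$ rules out any $B_\east$-path affecting the partition on $\partial_\west B_\west$ (no $\xi'$-component links $\partial_\west\Lambda$ to $B_\east$), so the bridges there coincide deterministically with those in $\xi'\restriction_{\partial_\west\Lambda}\in \Xi_{K,N}$. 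For the partially external sides $\partial_\north B_\west$ and $\partial_\south B_\west$, bridges are of two kinds: those inherited from $\xi'$ (at most $K\log N$ by assumption), and \emph{new} ones created when a $B_\east$-connection merges previously-disjoint $\xi'$-components into a single cluster spanning the edge. Each such new bridge corresponds to a disjoint primal cluster in $Y_k\restriction_{B_\east}$ linking two points of $\partial_\north\Lambda$ on opposite sides of the edge, and the number of these is bounded by $K\log N$ with probability $1-O(N^{-c_\Upsilon K})$ by a Proposition~\ref{prop:bridge-bound}-style estimate. Step (b) is handled analogously: since $\xi'$ contributes no cross-side components, it suffices to bound the number of disjoint $B_\east$-paths linking distinct sides of $\partial\Lambda$, again via Proposition~\ref{prop:bridge-bound}.

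The main obstacle is the bridge count on the partially external sides $\partial_\north B_\west$ and $\partial_\south B_\west$, where the bridges are of a hybrid nature and Lemma~\ref{lem:retain-bc-1} does not apply directly (it is stated only for the interior interface $\partial_\east B_\west$). Formally counting the ``new bridges'' requires applying Proposition~\ref{prop:bridge-bound} in a setting where the segment over which bridges are counted lies on the boundary $\partial_\north B_\east$ rather than at an internal halfway line, which I would arrange by embedding $B_\east$ as the lower half of a slightly larger auxiliary rectangle and using monotonicity in boundary conditions on the extension to absorb the missing configuration; the aspect-ratio hypothesis can then be satisfied uniformly for $n'\le n\le N$. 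Once this step is in place, a final union bound over the four sides of $\partial B_\west$ and the cross-side count, together with the uniformity of all estimates in $k$ and $\zeta$, yields $\rho\lesssim N^{-c_\Upsilon K}$, with $c_\Upsilon(q)>0$ inherited (up to a multiplicative constant) from Lemma~\ref{lem:retain-bc-1}.
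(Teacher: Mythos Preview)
Your overall structure matches the paper's: reduce to a single step after a $B_\east$-update, decompose $\Upsilon_{K,N}$ membership side by side, and invoke Lemma~\ref{lem:retain-bc-1} for the interior side $\partial_\east B_\west$. Where you diverge is in treating $\partial_\north B_\west$, $\partial_\south B_\west$, and the cross-side count: you flag these as the ``main obstacle'' and propose a probabilistic argument via a new application of Proposition~\ref{prop:bridge-bound}. The paper, by contrast, handles all three \emph{deterministically}, and the reason is a geometric observation you are missing.

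The configuration relevant to the induced boundary on $\partial B_\west$ lives on the strip $\Lambda\setminus B_\west^o=\llb \tfrac{3n}4,n\rrb\times\llb 0,n'\rrb$, which touches $\partial_\north\Lambda$ only on $\llb\tfrac{3n}4,n\rrb\times\{n'\}$. Since $\xi'$ carries the side modification, a $\xi'$-component $C$ of $\partial_\north\Lambda$ can be merged with anything via the strip only if $C$ already has a vertex in $\llb\tfrac{3n}4,n\rrb$. But then, for any edge $e\in\partial_\north B_\west$ (say at abscissa $x\le \tfrac{3n}4$), if $C$ also has a vertex to the left of $e$, then $C$ has vertices both $<x$ and $\ge\tfrac{3n}4>x$, so $C$ is \emph{already} a $\xi'$-bridge over $e$. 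Hence every induced bridge over $e$ contains a distinct $\xi'$-bridge over $e$, and the count is at most $K\log N$ deterministically. The cross-side count works the same way: any induced component touching both $\partial_\north B_\west$ and another side must contain a $\xi'$-bridge over the corner edge at $(\tfrac{3n}4,n')$, and symmetrically for $\partial_\south B_\west$, giving at most $2K\log N$.

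Your description of a ``new bridge'' as a ``primal cluster in $Y_k\restriction_{B_\east}$ linking two points of $\partial_\north\Lambda$ on opposite sides of the edge'' is therefore geometrically off: the strip lies entirely to one side of every $e\in\partial_\north B_\west$, so no such cluster exists, and the embedding trick you sketch to force Proposition~\ref{prop:bridge-bound} into this setting is unnecessary. Once you replace that paragraph by the deterministic argument above, your proof coincides with the paper's.
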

\begin{proof}
Let $Y_k$ be the systematic block dynamics on $\Lambda_{n,n'}$ where $n\leq N$. Recall the definition of $\rho$ in Eq.~\eqref{eq:def-rho}, so that in the present setting,
\[\rho=\max_{\omega_0\in\{0,1\}} \max_{k\geq 1} \max_{i\in \{\east,\west\}} \mathbb P^{\xi'}_{\omega_0}(Y_k \restriction_{\partial B_i} \notin \Upsilon_{K,N})\,.
\]
In the first time step, $\omega_0\restriction_{B_\east}$ induces wired or free boundary conditions on $\partial_\west B_\east$ and so, by Remark~\ref{rem:wired-free}, the boundary condition on $\partial_\west B_\east$ is trivially in $\Xi_{K,N}$. Furthermore,  the boundary conditions on $\partial_{\north,\east,\south} B_\east$ also belong to $\Xi_{K,N}$ by the hypothesis  $\xi'\in \Upsilon_{K,N}$. Finally, there cannot be more than $2K \log  N$ components in the boundary condition on $\partial B_\east$ consisting of vertices on multiple sides for the following reason: as a result of the side modification on $\xi'$, such components can only arise from connections between $\partial_\west B_\east$ and the bridges in $\Gamma^{(n/4,0)}$ and $\Gamma^{(n/4,n')}$; however, there are at most $K\log N$ bridges in each set under \emph{any} configuration on $\Lambda-B_\east^o$ (summing to at most $2K\log N$ components, as claimed). Altogether,  $Y_1\restriction_{\partial B_\east}\in \Upsilon_{K,N}$ deterministically.

To address all subsequent time steps, by reflection symmetry and the definition of the systematic block dynamics, is suffices to consider $Y_2 \restriction_{\partial B_\west}$.
By Lemma~\ref{lem:retain-bc-1}, the probability that a boundary condition on $\partial _\east B_\west$ induced by the systematic dynamics will not be in $\Xi_{K,N}$ is $O(N^{-c_\Upsilon K})$, with $c_\Upsilon>0$ from that lemma.
The fact that, deterministically, the boundary conditions on $\partial_{\north,\south,\west} B_\west$ are in $\Xi_{K,N}$, and
there are  at most  $2K\log N$ components of the boundary condition on $\partial B_\west$ containing vertices of multiple sides of $\partial B_\west$, follows by the same reasoning argued for the first time step.
\end{proof}

We are now in a position to prove Theorem~\ref{thm:mainthm-fixed-bc}.
\begin{proof}[\textbf{\emph{Proof of Theorem~\ref{thm:mainthm-fixed-bc}}}]
Consider $\Lambda=\Lambda_{N,N'}$ with aspect ratio $\bar \alpha\in (0,1]$ and boundary conditions $\xi\in \Upsilon_{K,N}$ for a fixed
\begin{align}\label{eq:kappa}
K\geq K_0:=6(c_\star +1)\max\{c_\Upsilon^{-1},1\}\,,
\end{align}
where $c_\star=c_\star(\min\{\bar \alpha,\frac 12\},\frac 14,q)$ is the constant given by Proposition~\ref{prop:point-to-point-crossing}, and $c_\Upsilon=c_\Upsilon(q)$ is given by Corollary~\ref{cor:retain-bc-2}. It suffices to prove the proposition for all $K$ sufficiently large, as $\Upsilon_{K,N}\subset \Upsilon_{K',N}$ for every $K\leq K'$.

We prove the following inductively in $n\in\llb 1 , N \rrb$: for every $K>K_0$ as above,
every $(\bar \alpha\wedge \frac 12) n \leq n'  \leq n$, and every $\xi\in \Upsilon_{K,N}$, if
\[t_n=N^{2(c_\star +\lambda+1) \log_{4/3} n}\quad \mbox{ where }\quad \lambda:=32K \log q+5\,,
\]
then Glauber dynamics for the critical FK model on $\Lambda_{n,n'}$ has
\begin{align}\label{eq:inductive-assumption}
\|\mathbb P^{\xi}_{1} (X_{t_n}\in \cdot)-\mathbb P^{\xi}_0(X_{t_n} \in \cdot)\|_\tv \leq N^{-3}\,.
\end{align}
To see that Eq.~\eqref{eq:inductive-assumption} implies Theorem~\ref{thm:mainthm-fixed-bc}, note that~\eqref{eq-dbar-upper-bound},  with the choice $n=N$, implies that $\bar d_\tv(N^{c(\bar \alpha,q) \log N}) =O(1/N) = o(1)$ for some $c(\bar \alpha,q)>0$.

For the base case, fix a large constant $M$, where clearly $\tmix=O(1)$ for all $n\leq M$.
Next, let $m\in \llb M,N\rrb$, and assume~\eqref{eq:inductive-assumption} holds for all $n\in \llb 1,m-1\rrb$. Consider the censored  and systematic block dynamics, $(\bar X_t)_{t\geq 0}$ and $\{Y_k\}_{k\geq 0}$, respectively, on the blocks defined in Definition~\ref{def:block-choice} on $\Lambda_m=\Lambda_{m,m'}$ for some $(\bar \alpha\wedge \frac 12)m \leq m' \leq m$ and boundary conditions $\xi\in \Upsilon_{K,N}$.

Recall that $\xi\in \Upsilon_{K,N}$ has at most $K\log N$ bridges over any edge and at most $2K\log N$ components spanning multiple sides of $\partial \Lambda_m$; thus, by Fact~\ref{fact:modification-distance}, the boundary modification $\xi'$ defined in~\eqref{eq:modified-bc} satisfies $d(\xi',\xi)\leq 8K \log N$.
By the definition of $\lambda$, we have $|E|^2 q^{4d(\xi',\xi)}=o(N^{\lambda})$. Hence, by Lemma~\ref{lem:bc-perturbation} (Eq.~\eqref{eq:bc-pert-2}, where we increased the time on the right-hand to $N^\lambda$, for large enough $N$, by the monotonicity of $d_\tv$) and the above bound on $d(\xi',\xi)$, we have that for all $k,T\geq 0$,
\begin{align*}
\|\mathbb P^{\xi}_1(X_{N^{\lambda} kT}\in \cdot)-\mathbb P^{\xi}_0(X_{N^{\lambda} kT} \in \cdot)\|_\tv & \leq 2\max_{\omega_0\in\{0,1\}} \|\mathbb P_{\omega_0}^{\xi}(X_{N^{\lambda}kT}\in \cdot)-\pi_{\Lambda_m}^{\xi}\|_\tv  \\
&\leq  16\max_{\omega_0\in \{0,1\}}\|\mathbb P^{\xi'}_{\omega_0}(X_{kT}\in \cdot)-\pi_{\Lambda_{m}}^{\xi'}\|_\tv + 2e^{-N^{\lambda/4}}\,,
\end{align*}
and subsequently, by Theorem~\ref{thm:censoring},
\begin{equation}\label{eq:modification+censoring}
\|\mathbb P^{\xi}_1(X_{N^{\lambda} kT}\in \cdot)-\mathbb P^{\xi}_0(X_{N^{\lambda} kT} \in \cdot)\|_\tv \leq  16\max_{\omega_0\in \{0,1\}}\|\mathbb P^{\xi'}_{\omega_0}(\bar X_{kT}\in \cdot)-\pi_{\Lambda_{m}}^{\xi'}\|_\tv + 2e^{-N^{\lambda/4}}\,.
\end{equation}
We will next show that the first term in the right-hand above satisfies
\begin{align}\label{eq:Xbar-N3-bound}
\max_{\omega_0\in \{0,1\}}\|\mathbb P^{\xi'}_{\omega_0}(\bar X_{kT}\in \cdot)-\pi_{\Lambda_{m}}^{\xi'}\|_\tv = o(N^{-3})\,,
\end{align}
which will imply~\eqref{eq:inductive-assumption} if we choose $k,T$ such that $N^\lambda k T \leq t_m$. By triangle inequality,
\begin{align*}
\max_{\omega_0\in \{0,1\}}&\|\mathbb P^{\xi'}_{\omega_0}(\bar X_{kT}\in \cdot)-\pi_{\Lambda_{m}}^{\xi'}\|_\tv\\
 &\leq \max_{\omega_0\in\{0,1\}}\|\mathbb P^{\xi'}_{\omega_0} (\bar X_{kT}\in \cdot)-\mathbb P^{\xi'}_{\omega_0}(Y_k\in \cdot)\|_\tv  +\max_{\omega_0\in \{0,1\}}\|\mathbb P^{\xi'}_{\omega_0} (Y_{k}\in \cdot)-\pi_{\Lambda_m}^{\xi'}\|_\tv\\
&\leq  \max_{\omega_0\in\{0,1\}}\|\mathbb P^{\xi'}_{\omega_0} (\bar X_{kT}\in \cdot)-\mathbb P^{\xi'}_{\omega_0}(Y_k\in \cdot)\|_\tv  +e^{-c_Y km^{-c_\star}}\,,
\end{align*}
where the last inequality is valid for every $k\geq 2$ by Lemma~\ref{lem:systematic-bound}.
Using $\Upsilon'_{K,N}\subset \Upsilon_{K,N}$ and Proposition~\ref{prop:censoring-block-dynamics},
\begin{align*}
\max_{\omega_0\in \{0,1\}}\|\mathbb P^{\xi'}_{\omega_0} (\bar X_{kT}\in \cdot)-\pi_{\Lambda_{m}}^{\xi'}\|_\tv \leq k(\rho+\epsilon(T))+
e^{-c_Y km^{-c_\star}}\,,
\end{align*}
and so, combined with~\eqref{eq:modification+censoring},
\begin{equation}\label{eq:modification+censoring-2}
\|\mathbb P^{\xi}_1(X_{N^{\lambda} kT}\in \cdot)-\mathbb P^{\xi}_0(X_{N^{\lambda} kT} \in \cdot)\|_\tv \leq  16 
k(\rho+\epsilon(T))+
16 e^{-c_Y km^{-c_\star}}
 + 2e^{-N^{\lambda/4}}\,,
\end{equation}
where $\rho$ and $\epsilon$ were given in~\eqref{eq:T}--\eqref{eq:def-rho}, that is, in our context,
\begin{align*}
 \epsilon(T)&=  \max_{\omega'\in\Omega} \max_{i\in\{\east,\west\}} \max_{\zeta\in \Upsilon_{K,N}^{B_i}} \|\mathbb P_{\omega'}^{\zeta,B_i} (X_T\in \cdot )-\pi_{B_i}^{\zeta}\|_\tv\,,\\
\rho&=\max_{k\ge 1}\max_{ i\in\{ \east,\west\}} \P_{\omega_0}\left(Y_k\restriction_{\partial B_i}\notin \Upsilon_{K,N}^{B_i} \right)\,.
\end{align*}
We will bound $\epsilon(T)$  by the inductive assumption for the choice of
\begin{equation}\label{eq:choice-k-T}
 T:=  k t_{\left\lfloor 3m/4\right \rfloor } N^{\lambda} K \log N\,, \qquad \mbox{where}\qquad k:=c_Y^{-1} (c_\star+6)N^{c_\star} \log N\,.
\end{equation}
In order to apply the induction hypothesis for a box whose side lengths are smaller by a constant factor vs.\ the original dimensions of $m\times m'$, we repeat the above analysis for the sub-block $B_i$ (whose dimensions are $\lfloor \frac34 m\rfloor \times m'$), and get from Fact~\ref{fact:init-config-comparison} and the above arguments that
\begin{align*}
\epsilon(T) &\lesssim N^2 \max_{i\in\{\east,\west\}} \max_{\zeta\in \Upsilon_{K,N}^{B_i}} \|\mathbb P_{0}^{\zeta,B_i} (X_T\in \cdot )-\mathbb P_{1}^{\zeta,B_i} (X_T\in \cdot )\|_\tv\,,
\end{align*}
which by reapplying~\eqref{eq:modification+censoring-2} at the lower scale of the $B_i$'s implies that
\begin{align*}
\epsilon(T) &\lesssim N^2 k\left(\rho' + \epsilon'(\tfrac{T}{k N^{\lambda}})\right) +
N^2 e^{-c_Y km^{-c_\star}}
 + N^2 e^{-N^{\lambda/4}}\,,
\end{align*}
where $\epsilon'(T)$ and $\rho'$ are the counterparts of $\epsilon(T)$ and $\rho$ w.r.t.\ the sub-blocks (as per Definition~\ref{def:block-choice}) of $B_i$ rotated by $\pi/2$. (N.b.\ this rotation is crucial to ensuring that the aspect ratios of the rectangles we consider remain uniformly bounded as we recurse down in scale, and consequently the coupling probabilities satisfy the same lower bound; this rotation is also what forces us to maintain ``typical" boundary conditions on all four sides of the rectangles we are considering as opposed to, say, just on $\partial_{\east,\west} \Lambda$.)
 
This yields the following new bound on~\eqref{eq:Xbar-N3-bound}:
\begin{align*}
\max_{\omega_0\in\{0,1\}} \|\mathbb P^{\xi'}_{\omega_0}(\bar X_{kT}\in \cdot)-\pi_{\Lambda_m}^{\xi'}\|_\tv\lesssim N^2 k^2 \left(\rho+\rho'+\epsilon'(\tfrac{T}{k N^{\lambda}})\right)+k N^2 e^{-c_Y km^{-c_\star}}+o(N^{-3}).
\end{align*}
Note that the dimensions of the sub-blocks of $B_i$ (those under consideration in $\epsilon'(T)$) are $\lfloor \frac34 m\rfloor \times \lfloor \frac34 m'\rfloor$.
Hence, by the inductive assumption at scale $\lfloor\frac34 m\rfloor$ and Fact~\ref{fact:init-config-comparison},
\[ \epsilon'\left(t_{\left\lfloor 3m/4\right\rfloor}\right) = O(1/N)\,,\]
which, along with~\eqref{eq-dbar-upper-bound} and the sub-multiplicativity of $ \bar d_\tv(t)$, yields that for $T$ from~\eqref{eq:choice-k-T},
\[
\epsilon'(\tfrac {T}{kN^{\lambda}}) = \epsilon'\left(t_{\left\lfloor 3m/4\right\rfloor} K\log N\right) \lesssim N^{-K} \leq
N^{-6(c_\star+1)}\,.
\]
By Corollary~\ref{cor:retain-bc-2}, we have  $\rho\lesssim N^{-c_\Upsilon K} \leq N^{-6(c_\star+1)}$ by our choice of $K_0$, and similarly for $\rho'$. So, for $k= N^{c_\star+o(1)} $ as in~\eqref{eq:choice-k-T}, $k^2\rho\lesssim N^{-4c_\star-6+o(1)}=o(N^{-5})$, and similarly, $k^2\rho' =o( N^{-5})$. Finally, this choice of $k$ guarantees that
$k N^2 \exp(-c_Y k m^{-c_\star})$ is at most $k N^{-c_\star - 4} = o(N^{-3})$.
Combining the last three displays with these bounds yields~\eqref{eq:Xbar-N3-bound}.
The proof is concluded by noting that indeed  $N^{\lambda}  k T  \leq N^{2c_\star +2\lambda+o(1)} t_{\lfloor 3m/4\rfloor} \leq t_m$.
\end{proof}

\subsection{Mixing on the torus}\label{sub:proof-mainthm}

Here we extend Theorem~\ref{thm:mainthm-fixed-bc} to the $n\times n$ torus, proving Theorem~\ref{mainthm:1}. Observe that the periodic FK boundary conditions identified with $(\mathbb Z/n\mathbb Z)^2$ in fact have order $n$ components with vertices on multiple sides of $\partial \Lambda$. We thus have to extend the bound of Theorem~\ref{thm:mainthm-fixed-bc} to periodic boundary conditions using the topological structure of $(\mathbb Z/n\mathbb Z)^2$.
The proof draws from the extension of mixing time bounds in~\cite{LScritical} and~\cite{GL16} from fixed  boundary conditions to $(\mathbb Z/n\mathbb Z)^2$. In the present setting, having to deal with a specific class of boundary conditions forces us to reapply the bridge modification and the censored and systematic block dynamics techniques.

We first bound the mixing time on a cylinder with typical boundary conditions on its non-periodic sides.
In what follows, for any $\Lambda_{n,n'}$, label the following edges:
\begin{align*}
e_{\south\west}^\star=(0, \lfloor \tfrac {n'}2\rfloor +\tfrac 12)\,, \qquad & e^\star_{\south\east}=(n, \lfloor\tfrac {n'}2\rfloor +\tfrac 12)\,, \\
e_{\north\west}^\star=(0, \lfloor\tfrac {9n'}{10}\rfloor +\tfrac 12)\,, \qquad & e^\star_{\north\east}=(n, \lfloor \tfrac {9n'}{10}\rfloor +\tfrac 12)\,.
\end{align*}
Then define the modification $\xi'$ of boundary conditions $\xi$ by
\begin{align}\label{eq:xi-modification-cylinder}
\xi'=\xi^{e^\star_{\south\west}}\wedge\xi^{e^\star_{\south\east}}\wedge \xi^{e^\star_{\north\west}}\wedge \xi^{e^\star_{\north\east}}\wedge \xi^s
\end{align}
 and define $\Xi'_{K,N},\Upsilon'_{K,N}$ as before, for the new modification. We say that a boundary condition on $\partial_{\north,\south} \Lambda$ is in $\Upsilon_{K, N}$ if its restriction to each side is in $\Xi_{K,N}$ and there are fewer than $2K \log N$ distinct components with vertices in $\partial_{\north} \Lambda$ and $\partial_{\south} \Lambda$, and analogously for boundary conditions on $\partial_{\east,\west} \Lambda$.

\begin{theorem} [Mixing time on a cylinder]\label{thm:cylinder}
Fix $q\in (1,4]$, $\alpha \in (0,1]$ and $K>0$. There exists some $c(\alpha,q,K)>0$ such that the critical FK model on $\Lambda=\Lambda_{N,N'}$ with $ \alpha N \leq N'\leq\alpha^{-1} N $ and boundary conditions, denoted by $(p,\xi)$, that are periodic on $\partial_{\north,\south} \Lambda$ and  $\xi\in \Upsilon_{K,N}$ on  $\partial_{\east,\west} \Lambda$, satisfies
$\tmix \lesssim N^{c \log N}\,.
$
\end{theorem}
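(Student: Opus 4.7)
The plan is to mirror the proof of Theorem~\ref{thm:mainthm-fixed-bc}, combining the censored / systematic block dynamics of~\S\ref{sub:censoring-systematic} with the equilibrium estimates of~\S\ref{sec:equilibrium-estimates}, while invoking Proposition~\ref{prop:disjoint-crossings} to handle the periodic identification of $\partial_\north\Lambda$ and $\partial_\south\Lambda$.

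First I would reduce to the modified boundary $\xi'$ from~\eqref{eq:xi-modification-cylinder}. Since $\xi\in\Upsilon_{K,N}$, Fact~\ref{fact:modification-distance} yields $d(\xi,\xi')\lesssim K\log N$, so Lemma~\ref{lem:bc-perturbation} reduces the problem to bounding the mixing time under $\xi'$, at the cost of an extra factor $N^{O(K\log q)}$ that is absorbed into $N^{c\log N}$. Then I would take two overlapping horizontal strips $B_0,B_1\subset\Lambda$ of height about $3N'/4$ that together cover the cylinder, with $B_0$ not wrapping and $B_1$ wrapping the periodic identification, and with their horizontal interfaces positioned so as to be adjacent to the four edges $e^\star$ whose bridge modifications appear in~\eqref{eq:xi-modification-cylinder}. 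Viewed intrinsically, each $B_i$ is a rectangle of aspect ratio bounded away from $0$ and $\infty$, so Theorem~\ref{thm:mainthm-fixed-bc} applies to it provided the induced boundary lies in $\Upsilon_{K',N}$ for some $K'=K'(K,q)$.

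The two central auxiliary statements, analogous to Lemma~\ref{lem:systematic-bound} and Corollary~\ref{cor:retain-bc-2}, would be as follows. For the coupling estimate, I would run the systematic block dynamics and condition on a horizontal dual-crossing of the currently updated block from $\partial_\west\Lambda$ to $\partial_\east\Lambda$, which exists with probability at least $N^{-c_\star}$ by Proposition~\ref{prop:point-to-point-crossing}. The bridge and side modifications of $\xi'$ at the adjacent heights then guarantee that no residual component of the induced boundary straddles this dual-crossing, and the domain Markov property decouples the two halves of the block past the crossing, exactly as in the proof of Lemma~\ref{lem:systematic-bound}. For the typicality estimate, I would use Proposition~\ref{prop:bridge-bound} to bound bridges over each edge of the new horizontal interfaces created by the block decomposition, and crucially Proposition~\ref{prop:disjoint-crossings} to bound the number of distinct components spanning the complementary strip across the periodic identification; together these yield the analog of Corollary~\ref{cor:retain-bc-2}, namely $\rho=O(N^{-c_\Upsilon K})$ in the notation of Proposition~\ref{prop:censoring-block-dynamics}.

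With these two ingredients in place, the induction on $n\leq N$ runs exactly as in the proof of Theorem~\ref{thm:mainthm-fixed-bc}: the inner mixing time on each sub-block $B_i$ is at most $N^{O(\log N)}$ by Theorem~\ref{thm:mainthm-fixed-bc}, Proposition~\ref{prop:censoring-block-dynamics} transfers this to the censored chain, Theorem~\ref{thm:censoring} propagates it to the full dynamics, and the geometric decrease in block size terminates the recursion after $O(\log N)$ levels. The main obstacle will be the coupling analog of Lemma~\ref{lem:systematic-bound}: under periodic $\partial_{\north,\south}\Lambda$ the identification a priori produces a macroscopic number of connections across any horizontal line one might use for coupling, and without Proposition~\ref{prop:disjoint-crossings} the conditional bridge structure induced on the new horizontal interfaces is uncontrolled. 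It is precisely the disjoint-crossings estimate that makes a single horizontal dual-crossing sufficient for coupling, and hence that lets the whole inductive scheme go through.
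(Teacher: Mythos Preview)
Your overall plan matches the paper's: modify $\xi$ to $\xi'$ via~\eqref{eq:xi-modification-cylinder}, run the censored/systematic block dynamics on two overlapping horizontal strips (each intrinsically an $N\times \tfrac45 N'$ rectangle), bound $\rho$ via Propositions~\ref{prop:bridge-bound} and~\ref{prop:disjoint-crossings}, and invoke Theorem~\ref{thm:mainthm-fixed-bc} on the sub-blocks. However, there is a genuine gap in your coupling step, and a misconception about where Proposition~\ref{prop:disjoint-crossings} enters.

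A \emph{single} horizontal dual-crossing does not suffice to couple the systematic chains. When you update a block $B_i$ on the cylinder, \emph{both} of its horizontal sides lie in the interior of $\Lambda$, and hence both carry boundary data that depends on the unknown configuration $\omega\restriction_{\Lambda-B_i^o}$. (Contrast this with Lemma~\ref{lem:systematic-bound}, where only $\partial_\west B_\east$ depends on the configuration and the other three sides are fixed by $\xi'$.) To decouple the target strip $R\supset \Lambda-B_{3-i}$ from both horizontal sides of $B_i$, you therefore need two horizontal dual-crossings, one on each side of $R$ within $B_i$, landing at the four modified edges $e^\star_{\south\west},e^\star_{\south\east},e^\star_{\north\west},e^\star_{\north\east}$. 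This is exactly why the modification~\eqref{eq:xi-modification-cylinder} involves four edges at two different heights, and the paper pays $N^{-2c_\star}$ per round rather than $N^{-c_\star}$.

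Relatedly, Proposition~\ref{prop:disjoint-crossings} plays no role in the coupling step; it enters only in the bound on $\rho$, to control the number of distinct components of the induced boundary on $B_i$ that connect $\partial_\north B_i$ to $\partial_\south B_i$ through the complementary strip $\Lambda-B_i$. The coupling itself rests solely on the bridge and side modifications on $\partial_{\east,\west}\Lambda$ together with the two dual-crossings. Your final paragraph has this backwards.

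Finally, there is no induction on $n$ here: once the sub-blocks are rectangles with boundary in $\Upsilon_{3K,N}$, Theorem~\ref{thm:mainthm-fixed-bc} applies directly after a single reduction step. The talk of ``geometric decrease'' and ``$O(\log N)$ levels'' is superfluous.
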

\begin{proof}
We will use a similar approach as in the proof of Theorem~\ref{thm:mainthm-fixed-bc} to reduce the cylinder to rectangles with ``typical'' boundary conditions.
It suffices to prove the theorem for large $K$, since $\Upsilon_{K,N}\subset \Upsilon_{K',N}$ for $K\leq K'$. We establish it for every fixed
\[ K\geq K_0+K_0'\quad\mbox{where}\quad K_0 = 4(c_\star +1)(c_\Upsilon^{-1}\vee 1)\quad\mbox{and}\quad K'_0 = K_0(c^{-1}_\Psi\vee 1)\,,\]
in which $c_\star=c_\star(\frac\alpha5,\frac 1{4},q)>0$ is given by Proposition~\ref{prop:point-to-point-crossing}, the constant $c_\Upsilon$ is $c(\frac {2\alpha} 5,q)>0$ from Proposition~\ref{prop:bridge-bound}, and $c_{\Psi}=c_\Psi(\frac {3 \alpha} 5,q)>0$ is given by Proposition~\ref{prop:disjoint-crossings}.

Define, as in Definition~\ref{def:censoring-block}, the censored and systematic block dynamics on
\begin{align*}
B_0&:=\llb 0,N\rrb \times \llb 0,\tfrac {N'}5\rrb \cup \llb 0,N\rrb \times \llb \tfrac {2N'}5,N'\rrb\,, \\
B_1&:=\llb 0,N\rrb \times \llb 0,\tfrac {3N'}5\rrb \cup \llb 0,N\rrb \times \llb \tfrac {4N'}5,N'\rrb \,.
\end{align*}
The choice of boundary class on $B_i$ for $i=0,1$ is $\Gamma_i=\Upsilon_{3K,N}$.
Observe that  by translating vertically on the universal cover, the blocks $B_0$ and $B_1$ are, by construction, $N\times \frac 45 N'$ rectangles with non-periodic boundary conditions. These blocks and the coupling scheme are depicted in Figure~\ref{fig:torus-coupling}. 

\begin{figure}
\centering
\vspace{-0.075in}
  \begin{tikzpicture}
    \node (plot1) at (0,0) {};

    \node (plot2) at (5,0) {};
    
        \node (plot3) at (10,0) {};

    \begin{scope}[shift={(plot1.south west)},x={(plot1.south east)},y={(plot1.north west)}, font=\small]

     \draw[draw=black, thick] (0,0) -- (0,20);
     \draw[draw=black, thick] (14,0) -- (14,20);
     
     \draw[draw=black] (6.8,-.8) -- (6.8,.8);
     \draw[draw=black] (7.2,-.8) -- (7.2,.8);
     
          \draw[draw=black] (6.8,20-.8) -- (6.8,20+.8);
     \draw[draw=black] (7.2,20-.8) -- (7.2,20+.8);
     
     \draw[draw=black, thick] (0,8)--(14,8);
          \draw[draw=black, thick] (0,4)--(14,4);

      \node[font=\Large, color=green!20!black] at (7,14) {$B_0$};

     \draw[color=black,dashed] (0,0) rectangle (14,20);
     
     \draw[fill=green, opacity=.2] (0,0) rectangle (14,4);
          \draw[fill=green, opacity=.2] (0,8) rectangle (14,20);

    \end{scope}

    \begin{scope}[shift={(plot2.south west)},x={(plot2.south east)},y={(plot2.north west)}, font=\small]
    
     \draw[draw=black, thick] (0,0) -- (0,20);w.r.t
     \draw[draw=black, thick] (14,0) -- (14,20);
     
     \draw[draw=black] (6.8,-.8) -- (6.8,.8);
     \draw[draw=black] (7.2,-.8) -- (7.2,.8);
     
          \draw[draw=black] (6.8,20-.8) -- (6.8,20+.8);
     \draw[draw=black] (7.2,20-.8) -- (7.2,20+.8);
     
          \draw[draw=black, thick] (0,12)--(14,12);
          \draw[draw=black, thick] (0,16)--(14,16);
          
                \node[font=\Large, color=blue!20!black] at (7,6) {$B_1$};

     \draw[color=black,dashed] (0,0) rectangle (14,20);

     \draw[fill=blue, opacity=.2] (0,0) rectangle (14,12);
          \draw[fill=blue, opacity=.2] (0,16) rectangle (14,20);

    \end{scope}
    
        \begin{scope}[shift={(plot3.south west)},x={(plot3.south east)},y={(plot3.north west)}, font=\small]
    
     \draw[draw=black, thick] (0,0) -- (0,20);
     \draw[draw=black, thick] (14,0) -- (14,20);
     
     \draw[draw=black] (6.8,-.8) -- (6.8,.8);
     \draw[draw=black] (7.2,-.8) -- (7.2,.8);
     
          \draw[draw=black] (6.8,20-.8) -- (6.8,20+.8);
     \draw[draw=black] (7.2,20-.8) -- (7.2,20+.8);
     
          \draw[draw=black, thick] (0,8)--(14,8);
          \draw[draw=black, thick] (0,4)--(14,4);
          
        \node[font=\Large, color=black] at (7,14) {$R$};

     \draw[color=black,dashed] (0,0) rectangle (14,20);
     
          \draw[color=black] (0,12) rectangle (14,16);

     \draw[fill=green, opacity=.2] (0,0) rectangle (14,4);
          \draw[fill=green, opacity=.2] (0,8) rectangle (14,20);

               \node (fig1) at (7,13) {
	\includegraphics[width=113pt,height=50pt]{fig_interface}};
	
	               \node (fig1) at (7,18) {
	\includegraphics[width=107pt, height=30pt]{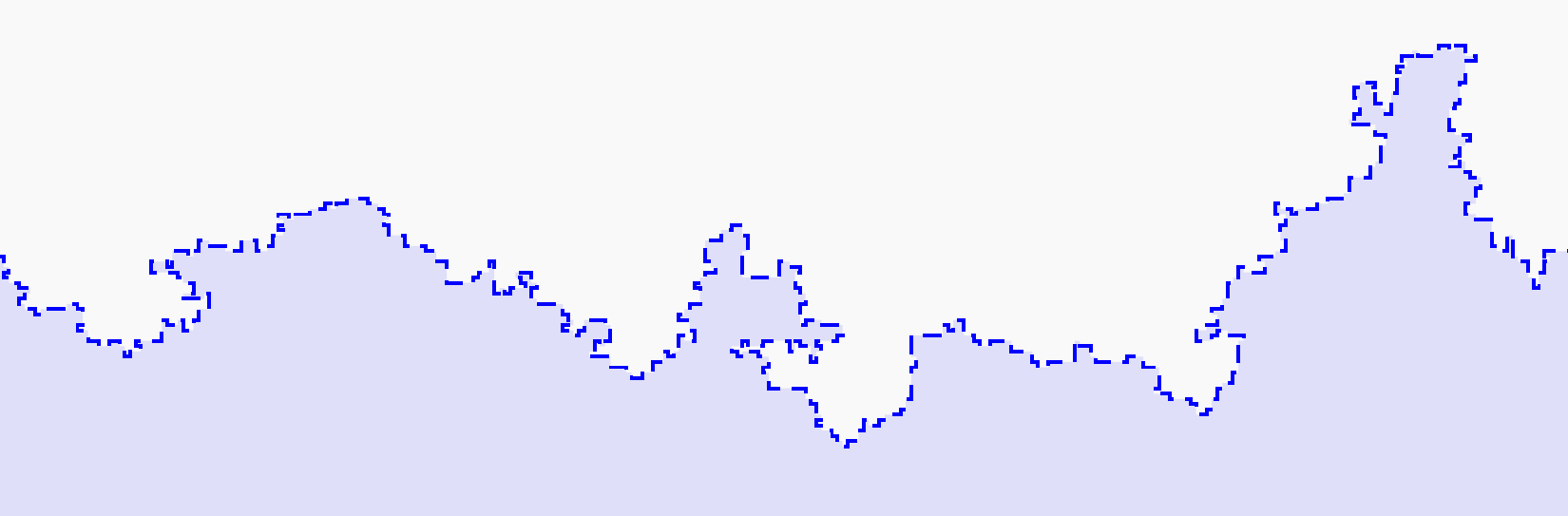}};

	      \draw[color=blue, dashed] (0,10) -- (-1.2,10);
	            \draw[color=blue, dashed] (14,10) -- (15.2,10);
	            
	            	      \draw[color=blue, dashed] (0,18) -- (-1.2,18);
	            \draw[color=blue, dashed] (14,18) -- (15.2,18);

          \node[color=DarkGray,font=\large] at (7,6) {$\boldsymbol 1/0$};
          
          \node[color=black] at (-1,14) {$\xi'$}; 
                    \node[color=black] at (15,14) {$\xi'$};

    \end{scope}

  \end{tikzpicture}
  \vspace{-0.05in}
  \caption{Left and center: block choices $B_0,B_1$ for the censored and systematic block dynamics block dynamics chain on $\Lambda_{N,N'}$ with periodic boundary on $\partial_{\north,\south}\Lambda$, and $\xi'$ on $\partial_{\east,\west}\Lambda$. Right: the dual crossings in $R_\south^\ast$ and $R_N^\ast$ which allow coupling on the set $R$.}
  \label{fig:torus-coupling}
    \vspace{-0.07in}
\end{figure}

It again suffices, by Fact~\ref{fact:init-config-comparison}, to show that there exists some $c(\alpha,q,K)>0$ such that
\begin{align}\label{eq:want-to-show-cylinder}
\|\mathbb P_{1}^{p,\xi}(X_{N^{c\log N}}\in \cdot)-\mathbb P^{p,\xi}_{0}(X_{N^{c\log N}} \in \cdot)\|_\tv \leq N^{-3}\,.
\end{align}
In the setting of the cylinder, the side modification $(p,\xi^s)$ of $(p,\xi)$ only disconnects $\partial_\east \Lambda$ from $\partial_\west \Lambda$, and so, if $\xi'$ is as in~\eqref{eq:xi-modification-cylinder}, then
 $d(\xi',\xi) \leq 6K \log N$. Thus,  by~\eqref{eq:bc-pert-2}, the triangle inequality and Theorem~\ref{thm:censoring} (as explained in the derivation of~\eqref{eq:modification+censoring}), if
\[t_N = N^\lambda kT\quad\mbox{ for }\quad\lambda:=24K+5\]
 (so that $|E|^2 q^{4d(\xi',\xi)} = o(N^\lambda)$), then for every $k,T\geq 0$,
\begin{align*}
\|\mathbb P^{p,\xi}_{1}(X_{t_N}\in \cdot)-\mathbb P^{p,\xi}_{0}(X_{t_N}\in \cdot)\|_\tv \leq
16\max_{\omega_0\in\{0,1\}}\|\mathbb P^{p,\xi'}_{\omega_0}(\bar X_{kT}\in \cdot)-\pi_{\Lambda}^{p,\xi'}\|_\tv + 2e^{-N^{\lambda/4}}\,,
\end{align*}
which, by Proposition~\ref{prop:censoring-block-dynamics}, is at most
\begin{align}\label{eq:cylinder-comp}
& \,\, 16\max_{\omega_0\in\{0,1\}}\|\mathbb P^{p,\xi'}_{\omega_0}(Y_k \in \cdot)-\pi_{\Lambda}^{p,\xi'}\|_\tv + 16k(\rho+\epsilon(T))+ 2e^{-N^{\lambda/4}}\,,
\end{align}
where $\epsilon(T)$ and $\rho$ are given by~\eqref{eq:T} and~\eqref{eq:def-rho}, respectively, w.r.t.\ the blocks $B_0,B_1$, the permissible boundary conditions $\Upsilon_{3K,N}$, and the initial configuration $\omega_0\in\{0,1\}$: 
\begin{align*}
 \epsilon(T)&=  \max_{\omega'\in\Omega} \max_{i\in\{0,1\}} \max_{\zeta\in \Upsilon_{3K,N}^{B_i}} \|\mathbb P_{\omega'}^{\zeta,B_i} (X_T\in \cdot )-\pi_{B_i}^{\zeta}\|_\tv\,,\\
\rho&=\max_{k\ge 1}\max_{ i\in\{ 0,1\}} \P_{\omega_0}\left(Y_k\restriction_{\partial B_i}\notin \Upsilon_{3K,N}^{B_i} \right)\,.
\end{align*}

We next bound the first term in the right-hand side of Eq.~\eqref{eq:cylinder-comp} by the probability of not coupling the systematic block dynamics chains started from two arbitrary initial configurations under the grand coupling (cf.\ Lemma~\ref{lem:systematic-bound}). In the first time step, we try to couple the chains started from $\omega_1,\omega_2$ on
\[R:=\llb 0,N\rrb \times \llb \tfrac {3N'}5,\tfrac {4N'}5\rrb\,,\]
so that in the second step the identity coupling couples them on all of $\Lambda$. It suffices to couple the systematic chains started from $\omega_1=0$ and $\omega_2=1$ under the grand coupling.
In order to couple samples from the $(0,\xi')$ and $(1,\xi')$ boundary conditions on $R$ (induced by $\omega_1=0$ and $\omega_2=1$ resp.), define the following two sub-blocks of $B_0$:
\begin{align*}
R_\south:=  \llb 0,N\rrb \times \llb \tfrac {2N'}5,\tfrac {3N'}5\rrb\,, \qquad R_\north:= \llb 0,N\rrb \times \llb \tfrac {4N'}5,N'\rrb\,.
\end{align*}
By Proposition~\ref{prop:point-to-point-crossing}, monotonicity in boundary conditions, and the FKG inequality,
\[\min_\eta \pi^{\eta,\xi'}_{B_0}\left(e^\star_{\south\west}\stackrel{R^\ast_\south}\longleftrightarrow e^\star_{\south\east} \,,\, e^\star_{\north\west}\stackrel{R^\ast_\north}\longleftrightarrow e^\star_{\north\east}\right)\gtrsim N^{-2c_\star}\,.
\]
By the Domain Markov property, and the definition of the boundary modification $\xi'$,
\[\pi^{1,\xi'}_{B_0}\left (\omega\restriction_{R}\given e^\star_{\south\west}\stackrel{R^\ast_\south}\longleftrightarrow e^\star_{\south\east} \,,\, e^\star_{\north\west}\stackrel{R^\ast_\north}\longleftrightarrow e^\star_{\north\east}\right )\stackrel{d}=
\pi^{0,\xi'}_{B_0}\left (\omega\restriction_{R}\given e^\star_{\south\west}\stackrel{R^\ast_\south}\longleftrightarrow e^\star_{\south\east} \,,\, e^\star_{\north\west}\stackrel{R^\ast_\north}\longleftrightarrow e^\star_{\north\east}\right)\,.
\]
As before, using the grand coupling and revealing edges from $\partial_\south R_\south$ and $\partial_\north R_\north$ until we reveal a pair of such horizontal dual-crossings, by monotonicity, we can couple $\pi_{B_0}^{\omega_1,\xi'}$ and $\pi_{B_0}^{\omega_2,\xi'}$ on $R$ with probability at least $c N^{-2c_\star}$. On that event, the two chains are coupled in the next step (and thereafter) on all of $\Lambda$ with probability 1. By the definition of the systematic block dynamics, we conclude that, for some $c_Y>0$ and every $k\geq 2$,
\[\max_{\omega_0\in\{0,1\}} \|\mathbb P^{p,\xi'}_{\omega_0}(Y_k\in \cdot)-\pi_{\Lambda}^{p,\xi'}\|_\tv \leq \exp(-c_YkN^{-2c_\star})\,.
\]

To bound $\rho$, first note that, for $\omega_0\in\{0,1\}$, the block $B_0$ has boundary conditions $(0,\xi')$ or $(1,\xi')$, both of which are in $\Upsilon_{3K,N}$ by Remark~\ref{rem:wired-free}. Thereafter,  the uniformity of Proposition~\ref{prop:bridge-bound} in boundary conditions implies that for every $\eta$,
\[\pi_{B_0}^{\eta,\xi'}(\omega\restriction_{\partial B_1} \notin \Xi_{K,N})\lesssim N^{-c_{\Upsilon} K}\,,
\] and likewise when exchanging $B_0$ and $B_1$.
We need to also bound the number of boundary components intersecting distinct sides of $B_0$ or $B_1$.  We can bound the connections between $\partial_{\north,\south}B_i$ and $\partial_{\east,\west} B_i$ (for $i=0,1$) deterministically by $4K\log N$ as in the proof of Corollary~\ref{cor:retain-bc-2}. In the present setting there could also be (multiple) open components connecting $\partial_{\south} B_i$ to $\partial_{\north} B_i$ in $\Lambda-B_i$. By Proposition~\ref{prop:disjoint-crossings} and monotonicity in boundary conditions, for every $\eta$,
\[\pi_{B_0}^{\eta,\xi}(|\Psi_{\Lambda-B_1}|\geq K\log N)\lesssim N^{-c_\Psi K}\,,
\]
where, as in that proposition, $|\Psi_{\Lambda-B_1}|$ is the number of distinct vertical crossings of $\Lambda-B_1$.
By the choices of $K_0$ and $K_0'$, a union bound yields
\[\rho \lesssim \max_{\eta} \pi_{B_0}^{\eta,\xi'} (\omega \restriction_{\partial B_1}\notin \Upsilon_{3K,N}) \lesssim N^{-4c_\star -4}\,.
\]

Observe that on their respective translates, $B_0$ and $B_1$ are $N\times \frac 45 N'$ rectangles, so we can bound $\max_i \max_{\xi\in \Upsilon_{3K,N}} \tmix^{\xi,B_i}$ using Theorem~\ref{thm:mainthm-fixed-bc}; by that theorem, rotational symmetry, and the sub-multiplicativity of $\bar d_{\tv}$, we have that for some $c_B(\alpha,q,K)>0$,
\[\epsilon(T)\lesssim \exp(-c_B^{-1}TN^{-c_B\log N})\,,
\]
uniformly over $ \alpha N \leq N'\leq  \alpha^{-1} N$.
Altogether, combining the bounds on $\rho$, $\epsilon$, and the systematic block dynamics distance from stationarity, in Eq.~\eqref{eq:cylinder-comp}, we see that for
\[k=N^{2c_{\star}+1}\quad \mbox{ and } \quad T=N^{(c_B+1)\log N}\,
\]
one has
\[\|\mathbb P^{p,\xi}_{1}(X_{t_N}\in \cdot)-\mathbb P^{p,\xi}_0(X_{t_N} \in \cdot)\|_\tv= o(N^{-3})\,,
\]
implying Eq.~\eqref{eq:want-to-show-cylinder} and concluding the proof.
\end{proof}

\begin{proof}[\textbf{\emph{Proof of Theorem~\ref{mainthm:1}}}]
The theorem is obtained by reducing the mixing time on the torus to that on a cylinder and then applying Theorem~\ref{thm:cylinder}. Fix $\bar \alpha\in(0,1]$ and consider $\Lambda=\Lambda_{n,n'}$ with $ \bar \alpha n  \leq n'\leq  \bar \alpha^{-1} n $ and periodic boundary conditions, denoted by $(p)$, identified with $(\mathbb Z/n\mathbb Z)\times (\mathbb Z/n'\mathbb Z)$ (the special case $n'=n$ is formulated in  Theorem~\ref{mainthm:1}).

Let $c_\star=c_\star(\frac {\bar \alpha} 5,\frac 14,q)>0$ be given by Proposition~\ref{prop:point-to-point-crossing} and let $c_\Upsilon$,$c_\Psi$, $K_0$ and $K_0'$ be given as in the proof of Theorem~\ref{thm:cylinder}. Define $K=K_0+K_0'$. We consider the censored and systematic block dynamics with the block choices,
\begin{align*}
B_0&:=\llb 0,n\rrb \times \llb 0,\tfrac {n'}5\rrb \cup \llb 0,n\rrb \times \llb \tfrac {2n'}5,n'\rrb \qquad B_1:=\llb 0,n\rrb \times \llb 0,\tfrac {3n'}5\rrb \cup \llb 0,n\rrb \times \llb \tfrac {4n'}5,n'\rrb \,,
\end{align*}
and boundary class
\[\Upsilon^p_{3K,n}:=\left\{\xi: \xi\restriction_{\partial_{\north,\south} \Lambda}=p, \xi\restriction_{\partial_{\east,\west} \Lambda}\in \Upsilon_{3K,n}\right\}\,.
\]
By Theorem~\ref{thm:censoring}, the triangle inequality and Proposition~\ref{prop:censoring-block-dynamics}, for every $k,T\geq 0$,
\begin{align*}
\|\mathbb P^p_{1}(X_{kT}\in \cdot)-\mathbb P^p_0 (X_{kT}\in \cdot)\|_\tv \leq & \,\, 2\max_{\omega_0\in\{0,1\}}\|\mathbb P^{p}_{\omega_0}(Y_k \in \cdot)-\pi_{\Lambda}^{p}\|_\tv+2k(\rho+\epsilon(T))\,,
\end{align*}
where $\rho$ and $\epsilon(T)$ are w.r.t.\ the class $\Upsilon_{3K,n}^p$ of permissible boundary conditions. It suffices, as before, to prove that the right-hand side is $o(n^{-3})$ and then use~\eqref{eq-dbar-upper-bound} and the sub-multiplicativity of $ \bar d_\tv(t)$ to obtain the desired result.

Recall the edges $e^\star_{\south\west},e_{\north\west}^\star, e_{\south\east}^\star$ and $e_{\north\east}^\star$ on $\Lambda_{n,n'}$.
As in the proof of Theorem~\ref{thm:cylinder}, if
\begin{align*}
R_\south:=  \llb 0,n\rrb \times \llb \tfrac {2n'}5,\tfrac {3n'}5\rrb\,, \qquad R_\north:= \llb 0,n\rrb \times \llb \tfrac {4n'}5,n'\rrb\,,
\end{align*}
then by Proposition~\ref{prop:point-to-point-crossing} and the FKG inequality, we have
\[\pi_{B_0}^{1,p} \left(e_{\south\west}^\star\stackrel{R_\south^\ast}\longleftrightarrow e_{\south\east}^\star\,,\, e_{\north\west}^\star\stackrel{R_\north^\ast} \longleftrightarrow e^\star_{\north\east}\right)\gtrsim n^{-2c_\star}\,.
\]
Crucially, while no boundary modification was done in this case, the periodic sides of $B_0$ have no bridges over the four designated edges, and the two horizontal dual-crossings, from the event above, disconnect its non-periodic sides ($\partial_{\north} B_0$ and $\partial_\south B_0$) from $\partial_\north B_1$ and $\partial_\south B_1$. Therefore,
if that event occurs for the systematic block dynamics chain started from $\omega_0=1$, the grand coupling carries it to the chains started from all other initial states, and yields a coupling of all these chains  on $\llb \frac{3n}5,\frac{4n}5\rrb \times \llb 0,n'\rrb \supset \partial B_1$. By definition of the systematic block dynamics and  sub-multiplicativity of $\bar d_{\tv}(t)$, for $k\geq 2$,
\begin{align}\label{eq:syst-dist-torus}
\max_{\omega_0\in \{0,1\}}\|\mathbb P_{\omega_0}^p(Y_k \in \cdot) -\pi_{\Lambda}^p\|_\tv\leq  \exp(-c_Y k n^{-2c_\star})\,.
\end{align}
Observe that at every time step of the systematic block dynamics, the block $B_i$ ($i=0,1$) is an $n\times \frac 45 n'$ rectangle with periodic boundary conditions on $\partial_{\east,\west} B_i$ and boundary conditions $\eta$ induced by the chain on $\partial_{\north,\south} B_i$. By Theorem~\ref{thm:cylinder}, for some $c(\bar \alpha,q,K)>0$,
\[\max_i\max_{(p,\eta)\in\Upsilon^p_{3K,n}}\tmix^{(p,\eta),B_i} \lesssim n^{c\log n}\,,
\]
and by sub-multiplicativity of $\bar d_{\tv}(t)$, we have $\epsilon (T)\lesssim \exp(-c^{-1} Tn^{c\log n})$.
 As in the proof of Theorem~\ref{thm:cylinder}, since the estimate on $\rho$ was uniform in the boundary conditions,  we again have $\rho\lesssim n^{-4c_\star -4}$ (using Propositions~\ref{prop:bridge-bound} and~\ref{prop:disjoint-crossings}).
Combining the bounds on $\rho$ and $\epsilon$ with~\eqref{eq:syst-dist-torus}, there exists some $c(\bar \alpha,q,K)>0$ such that
\[\|\mathbb P^p_{1}(X_{n^{c\log n}}\in \cdot)-\mathbb P^p_{0} (X_{n^{c \log n}}\in\cdot)\|_\tv = o({n^{-3}})\,,
\]
as required.
\end{proof}

\subsection*{Acknowledgment}
The authors thank the anonymous referees for their helpful suggestions. R.G.\ was supported in part by NSF grant DMS-1507019. E.L.\ was supported in part by NSF grant DMS-1513403.

\bibliographystyle{abbrv}
\bibliography{fk_mixing}

\end{document}